\numberwithin{equation}{section}
\newtheorem{corollary}[equation]{Corollary}
\newtheorem{lem}[equation]{Lemma}
\newtheorem{proposition}[equation]{Proposition}
\newtheorem{theorem}[equation]{Theorem}
\newtheorem*{maintheorem}{Main Theorem}
\theoremstyle{definition}
\newtheorem*{definition}{Definition}
\theoremstyle{remark}
\newtheorem*{remark}{Remark}
\newtheorem{num}[equation]{}
\DeclareMathOperator{\D}{D} \DeclareMathOperator{\DIM}{dim}
\DeclareMathOperator{\GL}{GL} \DeclareMathOperator{\IM}{Im}
\DeclareMathOperator{\RK}{rk} \DeclareMathOperator{\SPEC}{Spec}
\DeclareMathOperator{\R}{H}
\newcommand{\cf}{{\em cf.}}
\newcommand{\ie}{{\em i.e.}}
\newcommand{\ra}{\rightarrow}
\newcommand{\lra}{\longrightarrow}
\newcommand{\hra}{\hookrightarrow}
\renewcommand{\(}{\left(}
\renewcommand{\)}{\right)}
\renewcommand{\b}[1]{\mathbf{#1}}
\renewcommand{\d}[1]{\mathbb{#1}}
\renewcommand{\c}[1]{\mathcal{#1}}
\newcommand{\f}[1]{\mathfrak{#1}}
\renewcommand{\r}[1]{\mathrm{#1}}
\newcommand{\s}[1]{\mathscr{#1}}
\renewcommand{\sf}[1]{\mathsf{#1}}
\newcommand{\GA}{\b{G}_{\r{a},k}}
\newcommand{\EL}{\overline{\d{Q}_{\ell}}}
\newcommand{\RG}{\r{R}\Gamma}
\newcommand{\RGC}{\r{R}\Gamma_{\r{c}}}
\newcommand{\Bun}{\r{Bun}}
\newcommand{\Coh}{\r{Coh}}
\newcommand{\Fl}{\r{Fl}}
\newcommand{\Mod}{\r{Mod}}
\newcommand{\M}{\c{M}}
\newcommand{\Qb}{{\overline{\c{Q}}}{}}
\newcommand{\Qm}{{_{\mu}\c{Q}}{}}
\newcommand{\Qmb}{{_{\mu}\overline{\c{Q}}}{}}
\newcommand{\Bunp}{{\r{Bun}'}{}}
\newcommand{\Cohp}{{\r{Coh}'}{}}
\newcommand{\Flp}{{\r{Fl}'}{}}
\newcommand{\Modp}{{\r{Mod}'}{}}
\newcommand{\Mp}{{\c{M}'}{}}
\newcommand{\Qp}{{\c{Q}'}{}}
\newcommand{\Qbp}{{\overline{\c{Q}}'}{}}
\newcommand{\Zm}{{_{\mu}\c{Z}}{}}
\newcommand{\Zp}{{\c{Z}'}{}}
\newcommand{\Ztp}{{\widetilde{\c{Z}}'}{}}
\newcommand{\Cc}{{\c{C}^{\circ}}{}}
\newcommand{\Y}{\c{Y}}
\renewcommand{\Yc}{{\c{Y}^{\circ}}{}}
\newcommand{\Ytc}{{\widetilde{\c{Y}}^{\circ}}{}}
\newcommand{\Yt}{{\widetilde{\c{Y}}}{}}
\newcommand{\AS}{\sf{AS}_{\psi}}
\newcommand{\ASt}{{\widetilde{\sf{AS}}}{}_{\psi}}
\newcommand{\ASb}{{\overline{\sf{AS}}}{}_{\psi}}
\newcommand{\Aut}{\sf{Aut}}
\newcommand{\T}{\sf{T}}
\newcommand{\W}{\sf{W}}
\newcommand{\Lau}{\sf{Lau}}
\newcommand{\Spr}{\sf{Spr}}
\begin{document}

\title{On quadratic distinction of automorphic sheaves}

\author{Yifeng Liu}

\address{Department of Mathematics, Columbia University, New York NY 10027}
\email{liuyf@math.columbia.edu}
\date{Jan 25, 2011}
\subjclass[2000]{Primary 11R39; Secondary 11F70, 14H60, 22E57}

\begin{abstract}
We prove a geometric version of a classical result on the
characterization of an irreducible cuspidal automorphic
representation of $\r{GL}_n(\d{A}_E)$ being the base change of a
stable cuspidal packet of the quasi-split unitary group associated
to the quadratic extension $E/F$, via the nonvanishing of certain
period integrals, called being distinguished. We show that certain
cohomology of an automorphic sheaf of $\r{GL}_{n,X'}$ is
nonvanishing if and only if the corresponding local system $E$ on
$X'$ is conjugate self-dual with respect to an \'{e}tale double
cover $X'/X$ of curves, which directly relates to the base change
from the associated unitary group. In particular, the geometric
setting makes sense for any base field.
\end{abstract}

\maketitle

\tableofcontents

\section*{Introduction}
\label{sec:intro}

\begin{num}
{\em Classical point of view: distinguished representations.} Let
$E/F$ be a quadratic extension of global fields with the Galois
group $\{1,\sigma\}$, $\Pi$ an irreducible cuspidal automorphic
representation of $\r{GL}_n(\d{A}_E)$. We consider the following two
period integrals.
 \begin{align*}
 \b{P}^+(\phi)&=\int_{\r{GL}_n(F)\backslash\r{GL}_n(\d{A}_F)^0}\phi(h)\r{d}h;\\
 \b{P}^-(\phi)&=\int_{\r{GL}_n(F)\backslash\r{GL}_n(\d{A}_F)^0}\phi(h)\omega_{E/F}(\det h)\r{d}h
 \end{align*}
where $\phi$ is a cusp form in the space of $\Pi$,
$\r{GL}_n(\d{A}_F)^0$ is the subgroup of $\r{GL}_n(\d{A}_F)$
consisting of elements whose determinant has norm $1$, and
$\omega_{E/F}$ is the quadratic character of $\d{A}_F^{\times}$
associated with $E/F$ via global class field theory. If $\b{P}^+$
(resp. $\b{P}^-$) is not identically zero on $\phi\in\Pi$, then we
say $\Pi$ is distinguished (resp. $\omega_{E/F}$-distinguished) by
$\r{GL}_{n,F}$, or simply distinguished (resp.
$\omega_{E/F}$-distinguished). Assuming the central character of
$\Pi$ is distinguished, it is proved in \cite{GJR} Proposition 1,
that $\Pi^{\vee}\cong\Pi^{\sigma}$ if and only if $\Pi$ is
distinguished (resp. distinguished or $\omega_{E/F}$-distinguished)
when $n$ is odd (resp. even). Here, $\Pi^{\vee}$ and $\Pi^{\sigma}$
denote the contragredient representation and the $\sigma$-twisted
representation, respectively. The proof is based on previous results
of Flicker \cite{Fl1}, \cite{Fl2}. From the point of view of
$L$-functions, $\Pi$ is distinguished if and only if the Asai
$L$-function $L(s,\Pi,\r{As})$ has a (simple) pole at $s=1$ and
$\Pi$ is $\omega_{E/F}$-distinguished if and only if
$L(s,\Pi\otimes\Omega,\r{As})$ has a (simple) pole at $s=1$, where
$\Omega$ is any automorphic character of $\d{A}_E^{\times}$ whose
restriction to $\d{A}_F^{\times}$ equals $\omega_{E/F}$
\cf~\cite{Fl1}.

The theory has a perfect geometric counterpart, in the framework of
Geometric Langlands Program. Instead of automorphic representations,
we work on automorphic sheaves and hence only consider the
everywhere unramified case. Then we will replace the period
integrals $\b{P}^+$ and $\b{P}^-$ by certain complexes of
$\ell$-adic sheaves on the base field, which now makes sense for any
field $k$. The isomorphism $\Pi^{\vee}\cong\Pi^{\sigma}$ has a
perfect Galois counterpart, namely the isomorphism $E^{\vee}\cong
E^{\sigma}$ for the corresponding local system $E$ of $\Pi$. The use
of poles of $L$-functions will implicitly appear in our proof as
cohomology. Now we are going to state our situation and Main Theorem
in more details.\\
\end{num}

\begin{num}
{\em Geometrization and Main Theorem.} Let $k$ be a field (which can
and will be assumed algebraically closed) and $\ell$ a prime number
invertible in $k$. Let $X$ be a connected smooth proper curve over
$k$ and $\mu:X'\ra X$ be a proper \'{e}tale morphism of degree $2$
with $X'$ connected. Let $\sigma$ be the unique nontrivial
isomorphism such that the following diagram commutes
 \begin{align*}
 \xymatrix{
  X' \ar[rr]^-{\sigma} \ar[dr]_-{\mu}
                &  &    X' \ar[dl]^-{\mu}    \\
                & X                 }
 \end{align*}

For a positive integer $n$, we denote $\Bun_n$ (resp. $\Bunp_n$) the
moduli stack of rank-$n$ vector bundles on $X$ (resp. $X'$). Then
the pullback of vector bundles under $\mu$ induces a morphism
$\mu_n:\Bun_n\ra\Bunp_n$. The stack $\Bun_n$ (resp. $\Bunp_n$) is a
disjoint union of its connected components $\Bun_n^d$ (resp.
$\Bunp_n^d$) parameterizing those bundles of normalized degree
(\cf~\ref{num:intro-notations}) $d$ for $d\in\d{Z}$, then we have
$\mu_n:\Bun_n^d\ra\Bunp_n^{2d}$. Moreover, we have an isomorphism
$\sigma_n:\Bunp_n\ra\Bunp_n$, induced by the pullback under
$\sigma$.

For a local system $E$ on $X'$ of rank $n$, we let $E^{\vee}$ be its
dual system and $E^{\sigma}=\sigma^*E$. If $E$ is irreducible, then
the geometric Langlands correspondence (proved by Drinfeld \cite{Dr}
for $n=2$, formulated by Laumon \cite{La1}, \cite{La2} and proved by
Frenkel-Gaitsgory-Vilonen for general $n$ \cite{FGV2}) associates
$E$ a $\EL$ perverse sheaf on $\Bunp_n$, denoted by $\Aut_E$, which
is irreducible and nontrivial on each $\Bunp_n^d$ and satisfies the
Hecke property with respect to $E$. If we denote $\EL$ the trivial
rank-one local system, then we have a canonical decomposition
$\mu_*\EL=\EL\oplus L_{\mu}$ for a degree 2 rank-one local system
$L_{\mu}$ on $X$. The determinant of vector bundles induces a
morphism $\r{det}:\Bun_n\ra\Bun_1=\r{Pic}_X$ preserving degree. For
a rank-one local system $L$ on $X$, we define $\T_L=\det^*\sf{A}_L$,
where $\sf{A}_L$ is the local system placed at degree zero shifted
from $\Aut_L$. For simplicity, we let $\T_{\mu}:=\T_{L_{\mu}}$.

We will define in Section \ref{sec:asai} one of the primary objects
in this paper, the Asai local system $\r{As}(E)$, which is a local
system of rank $n^2$ on $X$. It is constructed by the geometric
analogue of the classical method, called multiplicative induction or
twisted tensor product \cf~\cite{Pr} Section 7, for the Asai
representation of the Galois group. The following is our Main
Theorem.

\begin{maintheorem}\nonumber\label{the:intro-main}
Let notations be as above and $E$ an irreducible local system on
$X'$ of rank $n$, consider the following statements:\\
$(a)$. $E^{\vee}\simeq E^{\sigma}$;\\
$(b)$. $\D\Aut_E\simeq\sigma_n^*\Aut_E$, where $\D$ denotes the Verdier duality;\\
$(c)$. $\RGC(\Bunp_n^0,\Aut_E\otimes\sigma_n^*\Aut_E)\neq0$
\footnote[1]{One may complain that $\RGC$ is not defined by
Laszlo-Olsson \cite{LO} since $\Bunp_n^0$ is not of finite type in
general, but we only talk about the triviality of this complex which
is well-defined. In fact, even the complex makes sense since
actually the support of $\Aut_E$ on $\Bunp_n^0$ is of finite type.};\\
$(d^+)$. $\EL\subset\r{As}(E)$;\\
$(e^+)$. $\RGC(\Bun_n^0,\mu_n^*\Aut_E)\neq0$;\\
$(d^-)$. $L_{\mu}\subset\r{As}(E)$;\\
$(e^-)$. $\RGC(\Bun_n^0,\mu_n^*\Aut_E\otimes\T_{\mu})\neq0$.\\
Then, we have the following equivalence
\begin{align*}
(a)\Longleftrightarrow(b)\Longleftrightarrow(c)\Longleftrightarrow(d^+)\text{
or }(d^-)
\end{align*}
and
\begin{align*}
(d^+)\Longleftrightarrow(e^+);\qquad(d^-)\Longleftrightarrow(e^-).
\end{align*}\\
\end{maintheorem}

The theorem has the following corollaries on the direct image
complexes which are not easy to see directly.

\begin{corollary}\label{cor:intro}
(1) We have
 \begin{align*}
 \RGC(\Bun_n^0,\mu_n^*\Aut_E)=0 &\Longleftrightarrow
 \RG(\Bun_n^0,\mu_n^*\Aut_E)=0,\\
 \RGC(\Bun_n^0,\mu_n^*\Aut_E\otimes\T_{\mu})=0 &\Longleftrightarrow
 \RG(\Bun_n^0,\mu_n^*\Aut_E\otimes\T_{\mu})=0;
 \end{align*}\\
(2) Between $\RGC(\Bun_n^0,\mu_n^*\Aut_E)$ and
$\RGC(\Bun_n^0,\mu_n^*\Aut_E\otimes\T_{\mu})$, there is at most one
which is nontrivial and there is one, if and only if $E^{\vee}\simeq
E^{\sigma}$.\\
\end{corollary}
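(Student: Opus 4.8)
The plan is to deduce both parts of Corollary~\ref{cor:intro} from the Main Theorem, using Poincaré--Verdier duality together with the standard duality and twisting properties of automorphic sheaves; no new geometric input is needed beyond the Main Theorem, only bookkeeping of which rank-one local system, $\EL$ or $L_{\mu}$, sits inside $\r{As}(E)$. I will freely use the equivalences $(e^+)\Leftrightarrow(d^+)$, $(e^-)\Leftrightarrow(d^-)$ and $(a)\Leftrightarrow(d^+)\text{ or }(d^-)$. The two structural facts about the Asai system I rely on are its defining property $\mu^*\r{As}(E)\simeq E\otimes E^{\sigma}$ (tensor induction restricted to the index-two subgroup $\pi_1(X')$) and its compatibility with duality $\r{As}(E^{\vee})\simeq\r{As}(E)^{\vee}$.

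For Part (2), the clause ``there is a nontrivial one if and only if $E^{\vee}\simeq E^{\sigma}$'' is immediate: via $(e^{\pm})\Leftrightarrow(d^{\pm})$ it says exactly $(d^+)\text{ or }(d^-)$, which is $(a)$. The only extra content is the exclusivity. For this I pull back along $\mu$: since $\mu^*\EL\simeq\EL$ and $\mu^*L_{\mu}\simeq\EL$ (as $L_{\mu}$ is the order-two character trivialized by $\mu$), if both $(d^+)$ and $(d^-)$ held then $\EL$ and $L_{\mu}$ would be two non-isomorphic irreducible sub-local systems of $\r{As}(E)$, whose sum is therefore direct, giving $\EL\oplus L_{\mu}\hookrightarrow\r{As}(E)$ and hence $\EL^{\oplus2}\hookrightarrow\mu^*\r{As}(E)\simeq E\otimes E^{\sigma}$. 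But for $E$ irreducible $\r{Hom}(\EL,E\otimes E^{\sigma})=\r{Hom}(E^{\vee},E^{\sigma})$ has dimension at most one by Schur's lemma, a contradiction. So $(d^+)$ and $(d^-)$ are mutually exclusive.

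For Part (1), Poincaré--Verdier duality gives $\RG(Y,\c{F})=0\Leftrightarrow\RGC(Y,\D\c{F})=0$. I identify $\D(\mu_n^*\Aut_E)\simeq\mu_n^*\Aut_{E^{\vee}}$ up to shift and Tate twist, combining the duality property $\D\Aut_E\simeq\Aut_{E^{\vee}}$ of Hecke eigensheaves with the compatibility of $\D$ and $\mu_n^*$; since shift and twist do not affect (non)triviality, $\RG(\Bun_n^0,\mu_n^*\Aut_E)=0\Leftrightarrow\RGC(\Bun_n^0,\mu_n^*\Aut_{E^{\vee}})=0$. Applying $(e^+)\Leftrightarrow(d^+)$ to $E^{\vee}$ turns the right side into $\EL\not\subset\r{As}(E^{\vee})$. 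Finally, since $\r{As}(E^{\vee})\simeq\r{As}(E)^{\vee}$ and (under a purity hypothesis) $\r{As}(E)$ is semisimple---its pullback $E\otimes E^{\sigma}$ being semisimple and $\mu$ finite étale---the summand $\EL$ is a sub of $\r{As}(E)$ iff it is a sub of $\r{As}(E^{\vee})$, which is $(d^+)$ for $E$, hence $\RGC(\Bun_n^0,\mu_n^*\Aut_E)=0$. The twisted equivalence is identical after replacing $\EL$ by $L_{\mu}$ throughout, using $L_{\mu}^{\vee}\simeq L_{\mu}$, $\D(\c{F}\otimes\T_{\mu})\simeq\D\c{F}\otimes\T_{\mu}$, and $(e^-)\Leftrightarrow(d^-)$.

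The main obstacle is the duality computation $\D(\mu_n^*\Aut_E)\simeq\mu_n^*\Aut_{E^{\vee}}$: Verdier duality exchanges $\mu_n^*$ with $\mu_n^!$, so I must control the relative dualizing complex of $\mu_n:\Bun_n\ra\Bunp_n$---a morphism that is neither proper nor obviously smooth---and verify that the discrepancy between $\mu_n^*$ and $\mu_n^!$ is only a shift and twist, leaving the vanishing statement untouched. A secondary point is the passage ``sub $\Leftrightarrow$ quotient'' for $\EL$ in $\r{As}(E)$, which I would settle through semisimplicity of $\r{As}(E)$ (needing purity of $E$) or, failing that, by checking directly that the conjugate-duality pairing and its dual carry the same symmetry type, so that the same one of $\EL$, $L_{\mu}$ occurs for $E$ and for $E^{\vee}$.
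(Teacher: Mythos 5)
Your proposal is correct and takes essentially the same route as the paper: part (1) is Verdier duality plus $\D\Aut_E\simeq\Aut_{E^{\vee}}$, the Main Theorem applied to $E^{\vee}$, and the transfer back to $E$ via $\r{As}(E^{\vee})\simeq\r{As}(E)^{\vee}$ and the equality of symmetry types $c(E)=c(E^{\vee})$ (your stated fallback, which is exactly the paper's argument via Lemma \ref{lem:asai-conjugate}, and is preferable to your primary semisimplicity-under-purity route since no purity is available in this generality); part (2) is the dichotomy of Lemma \ref{lem:asai-conjugate} combined with $(d^{\pm})\Leftrightarrow(e^{\pm})$ and $(a)\Leftrightarrow(d^+)$ or $(d^-)$, where your Schur's-lemma argument through $\mu^*\r{As}(E)\simeq E\otimes E^{\sigma}$ supplies precisely the justification of what the paper calls the ``obvious'' dichotomy. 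The $\mu_n^*$-versus-$\mu_n^!$ subtlety you flag in identifying $\D(\mu_n^*\Aut_E)$ with $\mu_n^*\Aut_{E^{\vee}}$ is left equally implicit in the paper's one-sentence proof, so it is not a gap relative to the paper's own argument.
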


\end{num}

\begin{num}
{\em Mirabolic calculation.} The proof of Main Theorem relies on the
computation of the direct image complex of certain sheaves on the
moduli stack corresponding to the mirabolic subgroup of $\GL_n$. Let
$\M_n$ (resp. $\Mp_n$) be the moduli stack classifying pairs
$(\s{M},s_1)$ (resp. $(\s{M}',s_1')$), where $\s{M}$ (resp.
$\s{M}'$) is an object in $\Bun_n$ (resp. $\Bunp_n$) and
$s_1:\Omega_X^{n-1}\hra\s{M}$ (resp.
$s'_1:\Omega_{X'}^{n-1}\hra\s{M}'$) is an inclusion of
$\s{O}_X$-modules (resp. $\s{O}_{X'}$-modules). Here,
$\Omega_{\bullet}$ and $\s{O}_{\bullet}$ stand for the sheaf of
differentials and the structure sheaf, respectively and
$\Omega_{\bullet}^{n-1}$ means $\Omega_{\bullet}^{\otimes n-1}$.
Again, the pullback of pairs $(\s{M},s_1)$ under $\mu$ induces a
morphism $\ddot{\mu}_n:\M_n\ra\Mp_n$ and we have obvious morphism
$\pi:\M_n\ra\Bun_n$ (resp. $\pi':\Mp_n\ra\Bunp_n$) by forgetting
$s_1$ (resp. $s'_1$). Let $\M_n^d$ (resp. $\Mp_n^d$) be the inverse
image of $\Bun_n^d$ (resp. $\Bunp_n^d$) under $\pi$ (resp. $\pi'$).
Define $\W_E={\pi'}^*\Aut_E$ (up to a cohomological shift). We have
the following theorem.

\begin{theorem}\label{the:intro-mira}
Let notations be as above and $E$ an irreducible local system on
$X'$ of rank $n$. Let $L$ be a rank-one local system on $X$, then we
have for $d\geq0$ a canonical isomorphism
 \begin{align*}
 \RGC(\M_n^d,\ddot{\mu}_n^*\W_E\otimes\pi^*\T_L)\overset{\sim}{\lra}\RG(X^{(d)},(\r{As}(E)\otimes
 L)^{(d)})[2d]
 \end{align*}
where $X^{(d)}$ and $(\r{As}(E)\otimes L)^{(d)}$ denote the $d$-th
symmetric product of the curve $X$ and its local system
$\r{As}(E)\otimes L$, respectively.\\
\end{theorem}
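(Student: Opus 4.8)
The plan is to compute the left-hand side by unwinding the definition of the Asai local system and relating the mirabolic moduli stack to a symmetric-power construction, following the Laumon-style philosophy that automorphic sheaves restricted to mirabolic strata are built out of Whittaker-type data indexed by effective divisors. First I would recall the definition of $\r{As}(E)$ from Section~\ref{sec:asai}: since it is constructed by multiplicative induction (twisted tensor product) from $E$ on $X'$ along the double cover $\mu:X'\ra X$, the key structural fact is that for an effective divisor $D$ on $X$ the stalk of $(\r{As}(E))^{(d)}$ on $X^{(d)}$ should match the twisted-tensor Whittaker datum attached to the pullback divisor $\mu^*D$ on $X'$. I would therefore aim to express both sides through the Whittaker/Laumon sheaf $\Lau$ associated with $E$ on $\Bunp_n$, using that $\W_E={\pi'}^*\Aut_E$ carries, up to shift, exactly the Whittaker normalization that Laumon's construction provides.

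The core of the argument is a geometric \emph{averaging} computation. I would stratify $\M_n^d$ by the cokernel type of the inclusion $s_1:\Omega_X^{n-1}\hra\s{M}$, or equivalently analyze the fibration $\pi:\M_n^d\ra\Bun_n^d$, and push forward $\ddot{\mu}_n^*\W_E\otimes\pi^*\T_L$ in stages. The pullback $\ddot{\mu}_n$ relates the mirabolic data on $X$ to mirabolic data on $X'$, so that $\ddot{\mu}_n^*\W_E$ is governed by the pullback local system $E$ restricted to $\mu^{-1}$ of the relevant divisors; the twisted tensor product appears precisely because a length-$d$ divisor on $X$ pulls back to a $\sigma$-stable length-$2d$ divisor on $X'$ and the fibers of $E$ over the two points of $\mu^{-1}(x)$ must be tensored together. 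The factor $\pi^*\T_L=\pi^*\det{}^*\sf{A}_L$ contributes, via the determinant map and the compatibility of $\sf{A}_L$ with the symmetric-power realization of rank-one automorphic sheaves, exactly the twist by $L^{(d)}$, which combines with $\r{As}(E)^{(d)}$ into $(\r{As}(E)\otimes L)^{(d)}$. The shift $[2d]$ is forced by the relative dimension $2d$ of $\M_n^d$ over the configuration space of divisors (equivalently the Euler-characteristic bookkeeping in passing from $\RGC$ on the stack to $\RG$ on $X^{(d)}$), and I would track it carefully through each stratum.

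Concretely, the key steps, in order, are: (i) reduce to $L$ trivial by absorbing the $\T_L$-twist into a twist of $E$ on the $X$-side, since $\det$ intertwines the two; (ii) invoke Laumon's description of $\Aut_E$ on $\Bunp_n$ through the symmetric-power sheaf on $\Coh$ or the divisor-indexed Whittaker sheaves, so that ${\pi'}^*\Aut_E$ becomes computable stratum by stratum on $\Mp_n$; (iii) transport this description through $\ddot{\mu}_n$, identifying the twisted tensor product with the multiplicative-induction definition of $\r{As}(E)$; and (iv) assemble the contributions of all strata into the cohomology of the single local system $(\r{As}(E)\otimes L)^{(d)}$ on $X^{(d)}$, checking that the spectral-sequence or stratification-gluing degenerates so the answer is precisely $\RG(X^{(d)},(\r{As}(E)\otimes L)^{(d)})[2d]$ with no extra terms. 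The canonicity of the isomorphism would follow from tracking Laumon's normalizations functorially throughout.

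The hard part will be step~(iii): matching Laumon's Whittaker normalization of $\Aut_E$ under pullback by $\ddot{\mu}_n$ with the twisted-tensor-product definition of $\r{As}(E)$. The subtlety is that the fiber of $\mu$ over a point $x\in X$ can be either two distinct points (split case) or behave as a single point with a $\sigma$-action encoding the Frobenius/monodromy twist (inert-type behavior), and the twisted tensor product treats these two situations differently; one must verify that the geometric multiplicative-induction construction correctly interpolates both, and that the stalk of $\r{As}(E)^{(d)}$ over a configuration with both split and non-split points matches the fiberwise tensor structure coming from ${\pi'}^*\Aut_E$. Getting the signs, Tate twists, and shifts to agree across these cases — so that the stratum-by-stratum computation glues into a clean isomorphism rather than merely a filtration — is where the real work lies.
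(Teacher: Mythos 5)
Your outline reproduces the general Lysenko-style frame (Laumon/Whittaker sheaves, stratification, reassembly into symmetric powers), which is indeed the frame the paper uses; but it never explains why only $\sigma$-symmetric data contribute, and that is the entire content of the theorem. After base change through $\W_E\simeq{\pi'_n}_!\W_{E,n}^{2d}$, the computation lives on $\Qbp^{2d}_n\times_{\Mp^{2d}_n}\M^d_n$, whose strata carry divisor data on $X'$ of degree $2d$ that in general do \emph{not} come from $X$ by pullback; nothing in the geometry of $\ddot{\mu}_n$ forces them to, contrary to your assertion that the pullback data is automatically $\sigma$-stable. What kills the non-symmetric strata is the Artin--Schreier factor in the Whittaker sheaf: the paper's Lemma \ref{lem:res-key}, a Fourier-orthogonality statement saying that $\r{pr}_{2!}{\r{ev}_{\s{F}}}^*\AS$ is supported on the subspace $\r{Hom}(\s{L}_{\mu},\s{F})\subset\r{H}^0(X',\mu^*\s{F})$, applied iteratively through chains of affine fibrations (Propositions \ref{pro:res-res} and \ref{pro:red-red}) to show the direct image is supported exactly on the pullback locus $\Qmb^d_n$. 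This is the geometric avatar of unfolding the period integral in Flicker/Gelbart--Jacquet--Rogawski, and without it your step (iv) --- ``checking that the stratification-gluing degenerates'' --- is not a check but the whole theorem. Relatedly, stratifying $\M_n^d$ by the cokernel of $s_1$ alone never sees the Whittaker character: one needs the full Drinfeld compactifications $\Qbp_m$, $\Qmb_m$ with their flag data, Lysenko's restriction theorem (Proposition \ref{pro:res-lysenko}, resting on geometric Casselman--Shalika), and the dimension estimates of Sections \ref{sec:red}--\ref{sec:direct} that control the relevant spectral sequence.

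Two further concrete problems. First, your step (i) fails: the only way to twist $E$ by data from $X$ is by $\mu^*L$, and $\r{As}(E\otimes\mu^*L)\simeq\r{As}(E)\otimes L^{\otimes 2}$, so this absorbs only squares (norms); an arbitrary rank-one $L$ on $X$ cannot be reduced to the trivial one. The paper instead carries $L$ along via $\f{d}^*\sf{A}_L$ and the Abel--Jacobi map, which costs nothing. Second, the difficulty you single out --- split versus inert behavior of fibers of $\mu$ --- is vacuous here: $k$ is algebraically closed and $\mu$ is \'{e}tale, so every fiber consists of two distinct points; the ``inert'' phenomenon is purely monodromic and is already built into the canonical definition of $\r{As}(E)$ via ${X'}^{(2)}-\Delta$. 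The genuine remaining work, which your plan omits entirely, is the stabilization in the auxiliary rank $m$: the paper computes the direct image for the Whittaker sheaves $\W_{E,m}$ for all $m\geq1$, obtaining an increasing filtration $(\r{As}(E)\otimes L)^{(d)}_m$ of $(\r{As}(E)\otimes L)^{(d)}$ (Proposition \ref{pro:res-coh}, Corollary \ref{cor:res-grade}), and then uses Laumon's vanishing for the top cohomology sheaves $E^{\lambda}_-$ to show this filtration exhausts precisely when $m\geq n$; only then does the $m=n$ case of the computation identify with the full Asai symmetric power.
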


The proof of the above theorem follows the same line in \cite{Ly}
for the geometrized Rankin-Selberg method due to Lysenko, which can
be viewed as the geometrization of the well-known classical way
treating the Rankin-Selberg integral of Jacquet, Piatetskii-Shapiro
and Shalika. We modify the argument in \cite{Ly} to our situation,
just as the modification of the classical argument in \cite{GJR},
\cite{Fl1}. It is interesting and useful to make these
methodological comparison, which provides certain hint for the proof
in the geometric counterpart.\\
\end{num}

\begin{num}
{\em Connection with functoriality.} Let us go back to the classical
situation. As pointed out in \cite{GJR}, \cite{Fl2}, the isomorphism
$\Pi^{\vee}\cong\Pi^{\sigma}$ has important meaning on the
functorial lifting. Let $\r{U}_{n,E/F}$ denote the quasi-split
unitary group of $n$ variables with respect to the quadratic
extension $E/F$. When $n$ is odd, the representation $\Pi$ with
central character being trivial on $\d{A}_F^{\times}$ which
satisfies $\Pi^{\vee}\cong\Pi^{\sigma}$ should be a (stable) base
change of a stable cuspidal packet of $\r{U}_{n,E/F}$. When $n$ is
even, the representation $\Pi$ satisfying
$\Pi^{\vee}\cong\Pi^{\sigma}$ should also be a base change of a
stable cuspidal packet of $\r{U}_{n,E/F}$, either an unstable base
change or a stable base change, according to whether $\Pi$ is
distinguished or $\omega_{E/F}$-distinguished.

In the geometric situation. Let $\r{U}_{n,X'/X}$ be the quasi-split
unitary group with respect to $X'/X$, which is a reductive group
over $X$ and whose Langlands dual group
$^L\r{U}_{n,X'/X}\simeq\r{GL}_n(\EL)\rtimes\pi_1(X)$ (the action
defining the semi-product factors through $\pi_1(X')$). Since we
have local systems parameterizing automorphic sheaves, it is easy to
see the following fact. When $n$ is odd, an irreducible local system
$E$ on $X'$ of rank $n$ is a (stable) base change (in fact,
restriction) of a $^L\r{U}_{n,X'/X}$-local system on $X$ if and only
if $E^{\vee}\simeq E^{\sigma}$ and $\r{As}(\wedge^nE)=\EL$
\cf~\ref{sec:asai}. When $n$ is even, an irreducible local system
$E$ on $X'$ of rank $n$ is an unstable or stable base change of a
$^L\r{U}_{n,X'/X}$-local system on $X$ if and only if
$E^{\vee}\simeq E^{\sigma}$. It is a dichotomy of being unstable or
stable which are corresponding to $(d^+)$ or $(d^-)$ in Main
Theorem. Hence, our Main Theorem provides a geometric/cohomological
characterization of automorphic sheaves of $\r{GL}_{n,X'}$ which
should correspond to the (conjectural) automorphic sheaves of
$\r{U}_{n,X'/X}$ via geometric Langlands
functoriality.\\
\end{num}

\begin{num}
{\em Structure of the paper.} In Section \ref{sec:asai}, we will
define the so-called Asai local system, which is the geometric
analogue of the classical Asai representation of the Galois group.
These local systems are the main objects studied in this paper. In
Section \ref{sec:steps}, we prove Main Theorem and its Corollary
assuming Theorem \ref{the:intro-mira}. The proof implicitly uses the
idea of poles which appear as cohomology on higher symmetric
products of the base curve. The rest sections are devoted to prove
Theorem \ref{the:intro-mira}.

In Section \ref{sec:res}, after recalling the definition of Laumon's
sheaf and the Whittaker sheaf, we reduce Theorem
\ref{the:intro-mira} to certain formula (see Proposition
\ref{pro:res-coh}) relating direct images of Whittaker sheaves and
symmetric products of constructible sheaves on the base curve.
Section \ref{sec:red} and Section \ref{sec:direct} are responsible
for the proof of this formula.\\
\end{num}

\begin{num}\label{num:intro-notations}
{\em Notations and Conventions.}

We fix a field $k$ (which can and will be assumed algebraically
closed) and a prime $\ell$ invertible in $k$. Fix an algebraic
closure $\EL$ of $\d{Q}_{\ell}$. We work with Artin stacks, \ie,
algebraic stacks in the smooth topology \cf~\cite{LMB}, locally of
finite type over $k$. In the main body of the paper, we will assume
$k$ has positive characteristic $p$ and work in the derived category
of unbounded complexes of $\EL$-sheaves with constructible
cohomology on an Artin stack $\c{X}$ locally of finite type over
$k$, in the sense of \cite{LO}, which we denote by $\D(\c{X})$. The
operations $f^*,f^!,f_*,f_!,\otimes$ are only applied to locally
bounded complexes and understood in the derived sense. For
$K\in\D(\c{X})$, we let $\R^iK$ be its $i$-th cohomology sheaf with
respect to the {\em usual} t-structure. In particular, we denote
$\EL$ the constant sheaf placed in degree 0. We fix a nontrivial
additive character $\psi:\d{F}_p\ra\EL^{\times}$ and denote $\AS$
the corresponding Artin-Schreier sheaf on the $k$-affine line $\GA$.
All the results and proof work perfectly in the case when $k$ has
characteristic 0, after replacing $\D(\c{X})$ by the derived
category of $\s{D}$-modules with holonomic cohomology and
the Artin-Schreier sheaf $\AS$ by the $\s{D}$-module $``e^x"$.\\

We say a stack $\c{X}$ classifies something, we always mean that it
is an Artin stack locally of finite type over $k$ such that for any
$k$-scheme $S$ and morphism $S\ra S'$, the groupoid
$\r{Hom}(S,\c{X})$ and the functor
$\r{Hom}(S',\c{X})\ra\r{Hom}(S,\c{X})$ are clearly understood. For
example, for the stack $\M_n$ defined above, objects in
$\r{Hom}(S,\M_n)$ are the pairs $(\s{M}_S,s_{1,S})$ where $\s{M}_S$
is a vector bundle on $S\times X$ of rank $n$ and
$s_{1,S}:\s{O}_S\boxtimes\Omega^{n-1}_X\hra\s{M}_S$ is an inclusion
of $\s{O}_{S\times X}$-modules such that the quotient $\s{M}_S/\IM
s_{1,S}$ is $S$-flat; morphisms between $(\s{M}_S,s_{1,S})$ and
$(\s{N}_S,t_{1,S})$ are isomorphisms
$f:\s{M}_S\overset{\sim}{\lra}\s{N}_S$,
such that $f\circ s_{1,S}=t_{1,S}$.\\

We identify the lattice of coweights of $\GL_m$: $\Lambda_m$ with
$\d{Z}^m$. Let
 \begin{align*}
 \Lambda_{m,\r{pos}}&=\{\lambda=(\lambda_1,...,\lambda_m)\in\Lambda_m\:|\:\lambda_1+\cdots+
 \lambda_i\geq0, i=1,...,m\};\\
 \Lambda_{m,\r{eff}}&=\{\lambda=(\lambda_1,...,\lambda_m)\in\Lambda_m\:|\:\lambda_i\geq0, i=1,...,m\}
 \subset\Lambda_{m,\r{pos}};\\
 \Lambda_{m,+}&=\{\lambda=(\lambda_1,...,\lambda_m)\in\Lambda_m\:|\:\lambda_1\geq\lambda_2\geq\cdots\geq
 \lambda_m\geq0\}\subset\Lambda_{m,\r{eff}};\\
 \Lambda_{m,-}&=\{\lambda=(\lambda_1,...,\lambda_m)\in\Lambda_m\:|\:0\leq\lambda_1\leq\lambda_2\leq\cdots\leq
 \lambda_m\}\subset\Lambda_{m,\r{eff}}.
 \end{align*}
For $d\in\d{Z}$, we let
$\Lambda_{m,?}^d=\{\lambda=(\lambda_1,...,\lambda_m)\in\Lambda_{m,?}\:|\:\lambda_1+\cdots+\lambda_m=d\}$
for $?=\emptyset,\r{pos},\r{eff},+,-$. Finally, we denote $\f{S}_m$
the group of $m$-permutations.\\

For a connected smooth scheme $S$ over $k$ and $d\geq0$, we define
its $d$-th symmetric product $S^{(d)}$ as the quotient of $S^d$
under the action by $\f{S}_d$ ($S^{(0)}=\SPEC k$). For a local
system $E$ on $S$ of rank $n$, we let
$E^{(d)}=\(\r{sym}_!E^{\boxtimes\:d}\)^{\f{S}_d}$ be the $d$-th
symmetric product of $E$, where $\r{sym}:S^d\ra S^{(d)}$ is the
natural projection. Hence $E^{(d)}[d\cdot\DIM S]$ is a perverse
sheaf on $S^{(d)}$. It is irreducible if and only if $E$ is. We
denote by $\det E=\wedge^mE$ the determinant of $E$, where $m$ is
rank of $E$.

We fix $X$ to be a connected smooth proper curve over $k$, then
$X^{(d)}$ is the variety of effective divisors of degree $d$ on $X$,
\ie, collections of $d$ closed points. For
$\lambda\in\Lambda_{m,\r{pos}}^d$, we let
$X^{\lambda}=\prod_{i=1}^mX^{(\lambda_1+\cdots+\lambda_i)}$. Let
$X^{\lambda}_+$ (resp. $X^{\lambda}_-$) be the closed subscheme of
$X^{\lambda}$ such that $(D_1,D_1+D_2,...,D_1+\cdots+D_m)\in
X^{\lambda}$ belongs to $X^{\lambda}_+$ (resp. $X^{\lambda}_-$) if
and only if $0\leq D_1\leq\cdots\leq D_m$ (resp. $D_1\geq\cdots\geq
D_m\geq0$), which is nonempty if and only if
$\lambda\in\Lambda_{m,+}^d$ (resp. $\lambda\in\Lambda_{m,-}^d$).
They are both contained in the subscheme
$X^{\lambda}_{\r{eff}}:=\prod_{i=1}^mX^{(\lambda_i)}\hra
X^{\lambda}$ (when $\lambda_i\geq0$). We denote by
$\r{sym}^{\lambda}:X^{\lambda}\ra X^{(d)}$ the projection to the
last factor, and also for its restriction to
$X^{\lambda}_{\r{eff}}$, $X^{\lambda}_+$ or $X^{\lambda}_-$.

Let $X$ be as above and $\s{M}$ a coherent sheaf on $X$, its degree
$\deg\s{M}$ is normalized to be its usual degree minus
$m(m-1)(g-1)$, where $m=\RK\s{M}$ is the rank of $\s{M}$ and $g$ is
the genus of $X$. Hence $\deg\bigoplus_{i=0}^{m-1}\Omega_X^i=0$.\\
\end{num}

\section{Asai local systems and conjugate self-duality}
\label{sec:asai}

\begin{num}
{\em Asai local systems.} Consider an \'{e}tale morphism $\mu:X'\ra
X$ of degree $l\geq1$ with $X'$ being connected, and a local system
$E$ of rank $m\geq1$ on $X'$. Since $\mu$ is \'{e}tale, we have a
canonical isomorphism
 \begin{align*}
 X\overset{\sim}{\lra}\underset{l\text{
 times}}{\(\underbrace{X'\times_X\cdots\times_XX'}-\Delta\)}/\f{S}_l.
 \end{align*}
Composing with the closed embedding
$X'\times_X\cdots\times_XX'\hra{X'}^l$, we get a morphism
 \begin{align*}
 \mu^{(1)}:X\lra{X'}^{(l)}-\Delta
 \end{align*}
where we abuse the notation $\Delta$ for various kinds of diagonals.

\begin{definition}
We define the {\em Asai local system} $\r{As}(E)$ by
 \begin{align*}
 \r{As}(E):=\mu^{(1)*}\(\r{sym}_!E^{\boxtimes\:l}\)^{\f{S}_l}=\mu^{(1)*}\(E^{(l)}\).
 \end{align*}
Since the symmetrization morphism $\r{sym}:{X'}^l\ra{X'}^{(l)}$ is
\'{e}tale with Galois group $\f{S}_l$ away from $\Delta$,
$\r{As}(E)$ is a local system on $X$ of rank $m^l$.\\
\end{definition}

\begin{remark}
Surprisingly, the construction of Asai local systems is canonical,
not like the construction of the classical Asai representation. For
the later one, we need to choose a set of representatives for the
left coset of $\pi_1(X',x')$ in $\pi_1(X,x)$ and show that the
isomorphism class of representation obtained from the process of
multiplicative induction is independent of the choice \cf~\cite{Pr}
Section 7.
\end{remark}

From now on, we will only consider the case $l=2$. Let $\sigma:X'\ra
X'$ be the unique nontrivial isomorphism such that
$\mu=\mu\circ\sigma$, and $E^{\sigma}=\sigma^*E$. Since $l=2$, we
have $\mu_*\EL=\EL\oplus L_{\mu}$ for some rank-one local system
$L_{\mu}$ on $X$.

\begin{lem}
(1) We have $\r{As}(E^{\vee})\simeq\r{As}(E)^{\vee}$ and $\r{As}(E)\simeq\r{As}(E^{\sigma})$.\\
(2) There is a canonical isomorphism $\mu_*(E\otimes
E^{\sigma})\overset{\sim}\lra\r{As}(E)\oplus\r{As}(E)\otimes
L_{\mu}$.
\end{lem}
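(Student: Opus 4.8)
The plan is to prove both parts by working over the étale double cover and tracking the $\mathfrak{S}_2$-action through the symmetric-product construction of $\r{As}(E)$. Recall that $\r{As}(E)=\mu^{(1)*}(E^{(2)})$, where $\mu^{(1)}:X\ra {X'}^{(2)}-\Delta$ is induced by the diagonal-free part of the fiber product, and $E^{(2)}=(\r{sym}_!E^{\boxtimes 2})^{\f{S}_2}$.

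For part (1), the isomorphism $\r{As}(E^{\vee})\simeq\r{As}(E)^{\vee}$ should follow formally: since $\mu^{(1)}$ is a morphism into the étale locus of $\r{sym}$, the functor $E\mapsto\r{As}(E)$ is just restriction along $\mu^{(1)}$ of a symmetrization, and both $(-)^{\vee}$ and $\mu^{(1)*}$ commute with the construction because over the étale locus $\r{sym}_!=\r{sym}_*$ and Verdier duality interchanges them compatibly with taking $\f{S}_2$-invariants. I would first observe that $(E^{\boxtimes 2})^{\vee}\simeq(E^{\vee})^{\boxtimes 2}$ on ${X'}^2$, then pass to the $\f{S}_2$-invariants of the symmetrization, and finally pull back along $\mu^{(1)*}$, which commutes with duality as $\mu^{(1)}$ has smooth (in fact étale-adjacent) target restricted to the relevant locus. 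For $\r{As}(E)\simeq\r{As}(E^{\sigma})$, the key point is that $\sigma$ permutes the two factors of $X'\times_X X'$ and hence acts on ${X'}^2$ by the swap composed with $\sigma\times\sigma$; this swap is precisely absorbed by the $\f{S}_2$-symmetrization, so $E^{(2)}\simeq (E^{\sigma})^{(2)}$ after restriction, giving the claimed isomorphism.

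For part (2), the plan is to compute $\mu_*(E\otimes E^{\sigma})$ by relating it to the fiber square defining $\r{As}(E)$. The natural approach is to use the Cartesian diagram expressing $X'\times_X X'$ as $(X'\times_X X')$ with its two projections $p_1,p_2$, where $p_2=p_1\circ\sigma$ on the relevant component, so that $E\otimes E^{\sigma}=p_1^*E\otimes p_2^*E$ pulled back appropriately. I would use the canonical decomposition $\mu_*\EL=\EL\oplus L_{\mu}$ together with base change: the fiber product $X'\times_X X'$ decomposes into the diagonal copy of $X'$ and its complement, and pushing forward along $\mu$ the sheaf $E\otimes E^{\sigma}$ produces exactly the restriction along $\mu^{(1)}$ of $E^{\boxtimes 2}$, whose $\f{S}_2$-invariant and anti-invariant parts are $\r{As}(E)$ and $\r{As}(E)\otimes L_{\mu}$ respectively. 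Concretely, the $\f{S}_2$-action on $\mu_*(E\otimes E^{\sigma})$ (coming from the swap together with $\sigma$) decomposes the pushforward into its two eigen-components, the trivial one giving $\r{As}(E)$ and the sign one giving the twist by $L_{\mu}$, since $L_{\mu}$ is precisely the sign-character local system of the double cover.

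The main obstacle I expect is bookkeeping the $\f{S}_2$-equivariance correctly: one must verify that the swap automorphism used in the symmetric-product definition of $\r{As}(E)$ agrees, under the identification $X\simeq(X'\times_X X'-\Delta)/\f{S}_2$, with the automorphism of $E\otimes E^{\sigma}$ induced by $\sigma$ together with the tautological commutativity isomorphism of the tensor product, and that the sign-eigenspace is identified with a twist by $L_{\mu}$ rather than by some other rank-one system. This amounts to pinning down the monodromy of the sign character of the étale $\f{S}_2$-cover $\mu$ and matching it with $L_{\mu}$, which is the one genuinely non-formal verification. Everything else reduces to proper base change, the projection formula, and the fact that $\r{sym}$ is finite étale away from the diagonal.
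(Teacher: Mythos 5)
Your proposal is correct and is essentially the paper's own argument: both proofs rest on the Cartesian square exhibiting $\mu:X'\ra X$ as the pullback along $\mu^{(1)}$ of the \'{e}tale double cover $\r{sym}:{X'}^2-\Delta\ra{X'}^{(2)}-\Delta$, together with the identification $\r{sym}^*E^{(2)}\simeq E^{\boxtimes 2}$. The only difference is packaging --- the paper pulls back to get $\mu^*\r{As}(E)\simeq E\otimes E^{\sigma}$ and then applies the projection formula $\mu_*(E\otimes E^{\sigma})\simeq\r{As}(E)\otimes\mu_*\EL$, whereas you push forward with base change and split into $\f{S}_2$-eigencomponents, which yields the same decomposition; note that your one flagged ``non-formal'' step (matching the sign eigencomponent with $L_{\mu}$) is in fact essentially definitional, since $L_{\mu}$ is defined as the complement of $\EL$ in $\mu_*\EL$, i.e.\ as the sign local system of the cover $\mu$.
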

\begin{proof}
(1) is straightforward. For (2), consider the following diagram
 \begin{align*}
 \xymatrix{
   X' \ar[d]_-{\mu} \ar[rr]^-{(1,\sigma)} && X'\times X'-\Delta \ar[d]^-{\r{sym}} \\
   X \ar[rr]^-{\mu^{(1)}} && {X'}^{(2)}-\Delta   }
 \end{align*}
which is Cartesian. Over $X'\times X'-\Delta$, we have a canonical
isomorphism $\r{sym}^*E^{(2)}\overset{\sim}{\lra}E^{\boxtimes\:2}$.
Hence
 \begin{align*}
 \mu^*\r{As}(E)\simeq\mu^*\mu^{(1)*}E^{(2)}\simeq(1,\sigma)^*\r{sym}^*E^{(2)}\overset{\sim}{\lra}
 (1,\sigma)^*E^{\boxtimes\:2}\simeq E\otimes E^{\sigma}
 \end{align*}
By the projection formula, we have
 \begin{align*}
 \mu_*(E\otimes
 E^{\sigma})\simeq\r{As}(E)\otimes\mu_*\EL\simeq\r{As}(E)\oplus\r{As}(E)\otimes
 L_{\mu}.
 \end{align*}\\
\end{proof}
\end{num}

\begin{num}
{\em Conjugate self-duality.} We make the following definition as
the geometric analogue of the classical representation theory
\cf~\cite{GGP}.

\begin{definition}
Let the situation be as above, we say $E$ is {\em conjugate
self-dual} if there is a nontrivial map $b:E\otimes
E^{\sigma}\ra\EL$ between local systems, \ie, $E^{\vee}\simeq
E^{\sigma}$.

Let $b^{\sigma}:E\otimes E^{\sigma}\simeq E^{\sigma}\otimes
E=\sigma^*(E\otimes
E^{\sigma})\overset{\sigma^*b}{\lra}\sigma^*\EL=\EL$ be the
composition of $\sigma^*b$ with the transposition of two factors.
Then we say $E$ is {\em conjugate orthogonal} (resp. {\em conjugate
symplectic}) if moreover, we have $b^{\sigma}=b$ (resp.
$b^{\sigma}=-b$) and denote $c(E)=1$ (resp. $c(E)=-1$).\\
\end{definition}

It is easy to see that if $E$ is irreducible and conjugate
self-dual, then it is either conjugate orthogonal or conjugate
symplectic, and $c(E)=c(E^{\vee})=c(E^{\sigma})$. Its determinant
$\det E$ is also conjugate self-dual and $c(\det E)=c(E)^m$ where
$m$ is the rank of $E$. We have the following criterion.

\begin{lem}\label{lem:asai-conjugate}
For irreducible conjugate self-dual local system $E$, $c(E)=1$
(resp. $c(E)=-1$) if and only if $\r{As}(E)$ contains $\EL$ (resp.
$L_{\mu}$).
\end{lem}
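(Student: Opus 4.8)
The plan is to reduce everything, via adjunction, to the decomposition $\mu_*(E\otimes E^{\sigma})\simeq\r{As}(E)\oplus\r{As}(E)\otimes L_{\mu}$ furnished by the preceding lemma, and then to match the symmetry sign $c(E)$ with the eigenvalue of the deck involution $\sigma$ on the two summands. First I would record two preliminary facts. Since $E$ is irreducible and $E^{\vee}\simeq E^{\sigma}$, Schur's lemma shows $\r{Hom}(E\otimes E^{\sigma},\EL)\simeq\r{Hom}(E^{\sigma},E^{\vee})$ is one-dimensional, so a nontrivial $b$ is unique up to scalar and $c(E)=\pm1$ is well defined by $b^{\sigma}=c(E)b$. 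Moreover, combining part (1) of the preceding lemma with $E^{\vee}\simeq E^{\sigma}$ gives $\r{As}(E)^{\vee}\simeq\r{As}(E^{\vee})\simeq\r{As}(E^{\sigma})\simeq\r{As}(E)$, so $\r{As}(E)$ is self-dual, and $L_{\mu}$ is self-dual as well. Consequently, for $F=\EL$ or $L_{\mu}$, the local system $\r{As}(E)$ contains $F$ as a sub-local-system if and only if there is a nonzero map $\r{As}(E)\ra F$ (dualize and use self-duality of both $\r{As}(E)$ and $F$).

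Next, since $\mu$ is finite \'{e}tale we have $\mu_*=\mu_!$ and $\mu^!=\mu^*$, so adjunction yields a canonical isomorphism
\begin{align*}
\r{Hom}(E\otimes E^{\sigma},\EL)=\r{Hom}(E\otimes E^{\sigma},\mu^*\EL)\simeq\r{Hom}(\mu_*(E\otimes E^{\sigma}),\EL).
\end{align*}
Feeding in the decomposition of the preceding lemma, the right-hand side splits as $\r{Hom}(\r{As}(E),\EL)\oplus\r{Hom}(\r{As}(E)\otimes L_{\mu},\EL)$, and the second summand equals $\r{Hom}(\r{As}(E),L_{\mu})$ since $L_{\mu}^{\vee}\simeq L_{\mu}$. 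Thus a nonzero $b$ produces a nonzero map in exactly one of these two spaces, and, by the self-duality remark above, this is precisely the assertion that $\r{As}(E)$ contains $\EL$, respectively $L_{\mu}$.

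It remains to decide which summand $b$ lands in according to the sign $c(E)$; this is the heart of the argument. I would identify the deck involution with the transposition defining $b^{\sigma}$ as follows. Using the Cartesian square of the preceding lemma, base change gives $\mu^{(1)*}\r{sym}_!E^{\boxtimes2}\simeq\mu_*(E\otimes E^{\sigma})$ as $\f{S}_2$-sheaves, where the swap of the two factors of $X'\times X'$ restricts along $(1,\sigma)$ to the deck transformation $\sigma$ on $X'$ (indeed $\text{swap}\circ(1,\sigma)=(1,\sigma)\circ\sigma$). Passing to $\f{S}_2$-invariants identifies $\r{As}(E)=\mu^{(1)*}E^{(2)}$ with the $+1$-eigenspace of the induced involution $s$ on $\mu_*(E\otimes E^{\sigma})$; specializing to $E=\EL$ matches this with the invariant summand $\EL\subset\mu_*\EL$, so that (recall $E\otimes E^{\sigma}\simeq\mu^*\r{As}(E)$, whence by the projection formula $\mu_*(E\otimes E^{\sigma})\simeq\r{As}(E)\otimes\mu_*\EL$) the involution $s$ acts by $+1$ on $\r{As}(E)$ and by $-1$ on $\r{As}(E)\otimes L_{\mu}$. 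Under the adjunction above, the map $\tilde b\mapsto\tilde b\circ s$ corresponds on the source to precomposing $b$ with the transposition followed by $\sigma^*$, that is, to $b\mapsto b^{\sigma}$. Hence $\tilde b\circ s=c(E)\,\tilde b$, so $\tilde b$ is supported on the $\r{As}(E)$-summand when $c(E)=1$ and on the $\r{As}(E)\otimes L_{\mu}$-summand when $c(E)=-1$, which together with the previous paragraph gives the two equivalences.

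The main obstacle is the sign bookkeeping in this last step: verifying that the canonical identification $E\otimes E^{\sigma}\simeq\mu^*\r{As}(E)$ is $\sigma$-equivariant for the transposition on the left (so that $s$ really corresponds to $b\mapsto b^{\sigma}$ and not its negative), and that $\r{As}(E)$ is the invariant rather than the anti-invariant part. I would settle both by a stalk computation at a geometric point $x$ of $X$: writing $\mu^{-1}(x)=\{x'_1,x'_2\}$ interchanged by $\sigma$, the fiber of $\mu_*(E\otimes E^{\sigma})$ at $x$ is $(E_{x'_1}\otimes E_{x'_2})\oplus(E_{x'_2}\otimes E_{x'_1})$, on which $s$ is the evident swap; its $+1$-part is the symmetric line, which matches both $\r{As}(E)_x$ and the invariant part of $(\mu_*\EL)_x$, pinning down all signs.
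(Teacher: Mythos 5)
Your proposal is correct, but it is organized differently from the paper's proof, and the comparison is instructive. The paper does not use part (2) of the preceding lemma at all: it observes that both sides of the asserted equivalence are dichotomies (asserted as obvious, not proved), so that it suffices to prove a single implication, say $\EL\subset\r{As}(E)\Rightarrow c(E)=1$. For this it dualizes to get, by adjunction along $\mu^{(1)}$, a nontrivial map $E^{\vee(2)}\ra\mu^{(1)}_{\;*}\EL$ on ${X'}^{(2)}-\Delta$, pulls back along $\r{sym}$ to obtain a transposition-invariant pairing $b'$, and restricts along $(1,\sigma)$ to conclude $c(E^{\vee})=1$, hence $c(E)=1$. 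You instead work entirely downstairs on $X$: your key inputs are the decomposition $\mu_*(E\otimes E^{\sigma})\simeq\r{As}(E)\oplus\r{As}(E)\otimes L_{\mu}$ from part (2), the adjunction for $\mu$ itself, and the identification of the two summands with the $\pm1$-eigenspaces of the deck involution $s$, so that $b^{\sigma}=c(E)\,b$ translates directly into which summand $\tilde{b}$ is supported on. What your route buys: the Asai-side dichotomy is actually proved (Schur's lemma gives $\dim\r{Hom}(E\otimes E^{\sigma},\EL)=1$) rather than taken for granted, both implications fall out of one eigenvalue computation, and the structural fact that $\r{As}(E)$ and $\r{As}(E)\otimes L_{\mu}$ are the invariants and anti-invariants of the deck involution is a useful byproduct --- it is essentially what makes the paper's dichotomy claim "obvious." What the paper's route buys: brevity, and by proving only one implication it sidesteps exactly the sign bookkeeping (the $\sigma$-equivariance of $E\otimes E^{\sigma}\simeq\mu^*\r{As}(E)$ under the projection formula) that you rightly single out as the delicate point. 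Your resolution of that point by a stalk computation is legitimate: a morphism of local systems on a connected base is determined by its action on a single stalk, so checking the two involutions agree (and identifying the symmetric line with the stalk of the canonical summand $\EL\subset\mu_*\EL$) at one geometric point does pin down all signs.
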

\begin{proof}
It is obvious that we have dichotomy on both sides, hence we only
need to show, for example, that $\EL\subset\r{As}(E)$ implies
$c(E)=1$. By duality and adjunction, we have
 \begin{align*}
 \(\r{sym}_!E^{\vee\boxtimes\:2}\)^{\f{S}_2}=E^{\vee(2)}\lra\mu^{(1)}_{\;*}\EL
 \end{align*}
on ${X'}^{(2)}-\Delta$. By the pullback under $\r{sym}$, we get a
nontrivial map
$b':E^{\vee\boxtimes\:2}\simeq\r{sym}^*E^{\vee(2)}\lra\r{sym}^*\mu^{(1)}_{\;*}\EL$
which makes the following diagram commutative
 \begin{align*}
 \xymatrix{
  E^{\vee}\boxtimes E^{\vee} \ar[rr]^-{\sim} \ar[dr]_-{b'}
                &  &    E^{\vee}\boxtimes E^{\vee} \ar[dl]^-{b'}    \\
                &    \r{sym}^*\mu^{(1)}_{\;*}\EL              }
 \end{align*}
where the upper arrow is the transposition. By the pullback under
$(1,\sigma)$, we get $b^{\vee}:E^{\vee}\otimes E^{\vee\sigma}\ra\EL$
such that $c(E^{\vee})=1$, where $b^{\vee}=(1,\sigma)^*b'$. But this
implies that $c(E)=1$.\\
\end{proof}
\end{num}

\begin{num}\label{num:asai-sym}
{\em symmetrization.} For $d\geq0$, we identify $\f{S}_d$ with the
set of $d$-by-$d$ permutation matrix. Let $\f{T}_{2d}$ be the subset
of $\f{S}_{2d}$ consisting of those symmetric permutation matrix
whose diagonal entries are $0$. For $t=(t_{ij})\in\f{T}_{2d}$, we
define $1=i_1(t)<\cdots<i_d(t)$ and $i_a(t)<j_a(t)$ ($a=1,...,d$) by
the condition that $t_{i_a(t)j_a(t)}=1$. Then they are uniquely
determined and $\{i_a(t),j_a(t)\:|\:a=1,...,d\}=\{1,...,2d\}$. Let
$\mu^{(d)}:X^{(d)}\ra\({X}'^{(2)}\)^{(d)}\ra{X'}^{(2d)}$ be the
composition map. For any $t=(t_{ij})\in\f{T}_{2d}$, we define a
morphism:
 \begin{align*}
 \f{n}_t:{X'}^d\lra{X'}^{2d}\times_{{X'}^{(2d)}}X^{(d)}
 \end{align*}
such that the $i_a(t)$-th component of $\f{n}_t(x_1,...,x_d)$ is
$x_a$ and the $j_a(t)$-th component of $\f{n}_t(x_1,...,x_d)$ is
$\sigma(x_a)$. It is obvious that $\f{n}_t(x_1,...,x_d)$ locates in
${X'}^{2d}\times_{{X'}^{(2d)}}X^{(d)}$. Define
 \begin{align*}
 \f{n}:=\bigsqcup_{\f{T}_{2d}}\f{n}_t:\b{X}_d:=\bigsqcup_{\f{T}_{2d}}{X'}^d_t\lra{X'}^{2d}\times_{{X'}^{(2d)}}X^{(d)}
 \end{align*}
as the disjoint union over all $t\in\f{T}_{2d}$ with
${X'}^d_t={X'}^d$. Moreover, let $\r{pr}$ denotes the projection
${X'}^{2d}\times_{{X'}^{(2d)}}X^{(d)}\ra X^{(d)}$. Then we have

\begin{lem}\label{lem:asai-sym}
(1) The scheme ${X'}^{2d}\times_{{X'}^{(2d)}}X^{(d)}$ is of pure
dimension $d$ and its irreducible components one-to-one correspond
to schemes ${X'}^d_t$ for $t\in\f{T}_{2d}$;\\
(2) The morphism $\f{n}$ is finite and isomorphic over an open dense
subscheme of ${X'}^{2d}\times_{{X'}^{(2d)}}X^{(d)}$. Since $\b{X}_d$
is smooth, we have $\f{n}_!\EL[d]\simeq\r{IC}$ where $\r{IC}$
denotes the intersection cohomology sheaf on
${X'}^{2d}\times_{{X'}^{(2d)}}X^{(d)}$;\\
(3) We have a canonical isomorphism
$\r{pr}_!\(\(E^{\boxtimes\:2d}\boxtimes\EL\)\otimes\f{n}_!\EL\)^{\f{S}_{2d}}\overset{\sim}{\lra}\r{As}(E)^{(d)}$
on $X^{(d)}$, where $\f{S}_{2d}$ acts on ${X'}^{2d}$ naturally by
permuting $2d$ factors.
\end{lem}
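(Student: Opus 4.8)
The plan is to realize the fiber product $Z:={X'}^{2d}\times_{{X'}^{(2d)}}X^{(d)}$ as the union of the closed images of the maps $\f{n}_t$, and then read off all three assertions from this description. First I would note that since the symmetrization ${X'}^{2d}\ra{X'}^{(2d)}$ is finite and $\mu^{(d)}$ sends $D\in X^{(d)}$ to $\mu^*D$, the projection $\r{pr}:Z\ra X^{(d)}$ is finite; hence $\DIM Z\leq d$. A point of $Z$ is a tuple $(y_1,\ldots,y_{2d})\in{X'}^{2d}$ together with $D\in X^{(d)}$ satisfying $\sum y_i=\mu^*D$, and because $\mu$ is \'{e}tale, $\sigma$ is fixed-point-free, so the divisor $\sum y_i$ is $\sigma$-stable and $D$ is determined by the tuple. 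Each composite ${X'}^d\overset{\f{n}_t}{\lra}Z\ra{X'}^{2d}$ is the graph-type closed immersion placing $x_a$ (resp. $\sigma(x_a)$) in position $i_a(t)$ (resp. $j_a(t)$); thus $\f{n}_t$ is a closed immersion and ${X'}^d_t:=\f{n}_t({X'}^d)$ is a smooth closed subscheme of pure dimension $d$. Over the dense open $U\subset X^{(d)}$ of reduced divisors, a fiber consists of tuples whose $2d$ entries are distinct and fall into $d$ conjugate pairs, so the matching recording which positions are $\sigma$-conjugate is unique. This shows simultaneously that the ${X'}^d_t$ cover $Z$, are pairwise distinct over $U$, and constitute exactly the irreducible components, proving $(1)$.

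For $(2)$, the same computation over $U$ shows that $\f{n}$ is an isomorphism over the dense open $\r{pr}^{-1}(U)$, while finiteness is clear since each $\f{n}_t$ is a closed immersion and $\f{T}_{2d}$ is finite. The intersection cohomology statement then rests on the fact that each component ${X'}^d_t$ is already smooth: the normalization of a reduced scheme is the disjoint union of the normalizations of its irreducible components, so $\f{n}:\b{X}_d\ra Z$ is precisely the normalization. Since $\f{n}$ is finite, $\f{n}_!=\f{n}_*$ is t-exact, whence $\f{n}_!\EL[d]=\bigoplus_t\f{n}_{t*}\EL[d]=\bigoplus_t\EL_{{X'}^d_t}[d]$; as each summand is the IC sheaf of the smooth component ${X'}^d_t$, their direct sum is $\r{IC}$ of $Z$. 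This is exactly what the smoothness of $\b{X}_d$ buys us.

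For $(3)$, the plan is to push the computation through $\f{n}$ and then organize the $\f{S}_{2d}$-action by orbits. By the projection formula together with $(2)$ we have $\(E^{\boxtimes\:2d}\boxtimes\EL\)\otimes\f{n}_!\EL\simeq\f{n}_!\f{n}^*\(E^{\boxtimes\:2d}\boxtimes\EL\)$, so the left-hand side becomes $\bigl(\bigoplus_t(\r{pr}\circ\f{n}_t)_!\f{n}_t^*E^{\boxtimes\:2d}\bigr)^{\f{S}_{2d}}$. A direct check gives $\f{n}_t^*E^{\boxtimes\:2d}\simeq(E\otimes E^{\sigma})^{\boxtimes\:d}$ independently of $t$, and $\r{pr}\circ\f{n}_t:{X'}^d\ra X^{(d)}$ is the quotient map for the group $\Gamma:=\f{S}_2^d\rtimes\f{S}_d$ acting on ${X'}^d$ by $\sigma$ in each factor and by permuting the factors. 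Since $\f{S}_{2d}$ permutes the components transitively with stabilizer $\Gamma$, the sum over $\f{T}_{2d}$ is the induced object $\r{Ind}_{\Gamma}^{\f{S}_{2d}}$ of the single term indexed by a base matching $t_0$, so by Frobenius reciprocity taking $\f{S}_{2d}$-invariants reduces to taking $\Gamma$-invariants of that term. Computing these in stages, I would factor $\r{pr}\circ\f{n}_{t_0}=\r{sym}_X\circ\mu^d$; pushing along $\mu^d$ and using K\"{u}nneth yields $(\mu_!(E\otimes E^{\sigma}))^{\boxtimes\:d}$ with $\f{S}_2^d$ acting factorwise through the $\sigma$-equivariant structure, and by the decomposition $\mu_*(E\otimes E^{\sigma})\simeq\r{As}(E)\oplus\r{As}(E)\otimes L_{\mu}$ established above the invariant ($+$) part is $\r{As}(E)$, so the $\f{S}_2^d$-invariants give $\r{As}(E)^{\boxtimes\:d}$. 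Applying $\r{sym}_{X!}$ and taking the remaining $\f{S}_d$-invariants produces $(\r{sym}_{X!}\r{As}(E)^{\boxtimes\:d})^{\f{S}_d}=\r{As}(E)^{(d)}$ by definition.

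I expect the bookkeeping in $(3)$ to be the main obstacle: one must match the $\f{S}_{2d}$-equivariant structure carried by $\f{n}_!\EL$ with the orbit decomposition of $\f{T}_{2d}$, and in particular verify that the surviving $\f{S}_2$-invariant summand is $\r{As}(E)$ rather than $\r{As}(E)\otimes L_{\mu}$. By contrast, $(1)$ and $(2)$ are essentially geometric preliminaries whose only real subtlety is the closed-immersion property of the $\f{n}_t$, which is precisely what makes the components smooth and the normalization identification available.
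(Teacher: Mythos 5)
Your proposal is correct and, for parts (1) and (2), follows essentially the same route as the paper: the paper likewise identifies the images of the $\f{n}_t$ (via loci $\b{X}_t$ cut out by the incidences $x_i=\sigma(x_j)$, together with an explicit section $\f{n}^t$ showing $\f{n}_t$ is an isomorphism onto $\b{X}_t$ away from the diagonal), deduces pure dimension $d$ and the list of components, and obtains the IC statement from the closed-immersion property of the $\f{n}_t$ plus smoothness of ${X'}^d$ --- exactly your argument, including the normalization interpretation. One point to tighten: you conclude that the ${X'}^d_t$ cover the fiber product ${X'}^{2d}\times_{{X'}^{(2d)}}X^{(d)}$ from the description of the fibers over the reduced-divisor locus $U$, but covering over a dense open of the \emph{base} does not by itself exclude components sitting entirely over $X^{(d)}-U$ (that exclusion is part of what (1) asserts); the covering must be checked at every point, which is immediate from the fact you already recorded --- $\sum y_i=\mu^*D$ is $\sigma$-stable and $\sigma$ is fixed-point free, so for each $\sigma$-orbit the two fibers occur with equal multiplicity among the $y_i$ and the index sets can be matched, producing a $t\in\f{T}_{2d}$ through whose image the point passes. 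For (3) the paper says only that it is ``straightforward once knowing (1) and (2)''; your argument --- projection formula, transitivity of the $\f{S}_{2d}$-action on $\f{T}_{2d}$ with hyperoctahedral stabilizer $\f{S}_2^d\rtimes\f{S}_d$, Frobenius reciprocity, K\"{u}nneth along $\mu^d$, and identification of the $\f{S}_2$-invariant summand of $\mu_*(E\otimes E^{\sigma})$ with $\r{As}(E)$ via the descent structure coming from the first Lemma of Section 1 --- is precisely the intended filling-in, and the point you flag (that the invariant summand is $\r{As}(E)$ and not $\r{As}(E)\otimes L_{\mu}$) is indeed the only delicate step there; it holds because the swap on $E^{\boxtimes 2}$ restricted along $(1,\sigma)$ is exactly the canonical descent datum on $\mu^*\r{As}(E)$.
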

\begin{proof}
For any $t\in\f{T}_{2d}$, we define $\b{X}_t$ to be the subscheme of
${X'}^{2d}\times_{{X'}^{(2d)}}X^{(d)}$ consisting of points
$(x_1,...,x_{2d})$ such that $x_i=\sigma(x_j)$ if and only if
$t_{ij}=1$. Then $\b{X}_t$ forms a stratification of
${X'}^{2d}\times_{{X'}^{(2d)}}X^{(d)}$. Moreover, define
$\f{n}^t:\b{X}_t\ra{X'}^d_t$ by
$(x_1,...,x_{2d})\mapsto(x_{i_1(t)},...,x_{i_d(t)})$. Then
$\f{n}_t\circ\f{n}^t=\r{id}$. Hence $\f{n}_t$ is an isomorphism onto
$\b{X}_t$ away from the diagonal and
${X'}^{2d}\times_{{X'}^{(2d)}}X^{(d)}$ is of pure dimension $d$.
Since $\f{n}_t$ is a closed immersion for each $t$ and ${X'}^d_t$ is
smooth, (1) and (2) follow. The
statement (3) is straightforward once knowing (1) and (2).\\
\end{proof}
\end{num}

\section{Steps for the proof}
\label{sec:steps}

\begin{num}\label{num:steps-1}
{\em Proof of $(d^{\pm})\Longleftrightarrow(e^{\pm})$.} We first
prove the equivalence between $(d^?)$ and $(e^?)$ for $?=+,-$
assuming Theorem \ref{the:intro-mira}. Since two cases are similar,
we only prove the first case. By the Hecke property of $\Aut_E$
\cf~\cite{FGV2} Proposition 1.5, the nontriviality of
$\RGC(\Bun_n^0,\mu_n^*\Aut_E)$ is equivalent to the nontriviality of
$\RGC(\Bun_n^d,\mu_n^*\Aut_E)$ for any $d\in\d{Z}$.

Recall that a vector bundle $\s{M}$ on $X$ is called {\em very
unstable} if it can be written as $\s{M}=\s{M}_1\oplus\s{M}_2$ with
$\s{M}_i\neq0$ and $\r{Ext}^1(\s{M}_1,\s{M}_2)=0$. By \cite{FGV2}
Lemma 3.3, for any line bundle $\s{L}$ on $X$, there exists an
integer $c_n(\s{L})$ such that if $d\geq c_n(\s{L})$ and
$\s{M}\in\Bun_n^d(k)$ with $\r{Hom}(\s{M},\s{L})\neq0$, then $\s{M}$
is very unstable. Define $\c{U}$ (resp. $\c{U}'$) to be the open
substack of $\Bun_n$ (resp. $\Bunp_n$) by the condition
$\r{Hom}(\s{M},\Omega_X^n)=0$ (resp.
$\r{Hom}(\s{M}',\Omega_{X'}^n)=0$). Then it is well-known that
$\c{U}\cap\Bun_n^d$ and $\c{U}'\cap\Bunp_n^d$ are of finite type for
any $d\in\d{Z}$. Now we fix an even integer $2d\geq
c_n(\Omega_{X'}^n)$ and let
$\W_E={\pi'}^*\Aut_E[d_1]|_{\Bunp^{2d}_n}$ by a cohomological shift
$d_1$ such that $\W_E\simeq{\pi'_n}_!\W_{E,n}^{2d}$ (see
\ref{num:steps-laumon}). Since $E$ is irreducible, $\Aut_E$ is
cuspidal and hence $\Aut_E|_{\Bunp_n^{2d}}$ is supported on
$\c{U}'\cap\Bunp_n^{2d}$ by \cite{FGV2} Lemma 9.4. Since
$\mu_n^{-1}(\c{U}')\subset\c{U}$, we have
 \begin{align*}
 \pi_!\ddot{\mu}_n^*\W_E\simeq\pi_!\ddot{\mu}_n^*{\pi'}^*\Aut_E[d_1]
 \simeq\pi_!\pi^*\mu_n^*\Aut_E[d_1]\simeq\pi_!\(\EL|_{\pi^{-1}(\c{U}\cap\Bun_n^d)}\)\otimes\mu_n^*\Aut_E[d_1].
 \end{align*}
By Serre duality, $\pi^{-1}(\c{U}\cap\Bun_n^d)$ is a vector bundle
of some rank $d_2>0$ over $\c{U}\cap\Bun_n^d$. Hence restricted on
$\c{U}\cap\Bun_n^d$, we have a distinguished triangle
 \begin{align*}
 \xymatrix{
 \pi_!\EL \ar[r] & \EL[-2d_2] \ar[r] & \EL \ar[r]^-{+1} &\:}
 \end{align*}
Tensoring with $\mu_n^*\Aut_E[d_1]$ and applying $\RGC$, we have a
distinguished triangle
 \begin{align*}
 \xymatrix{
 \RGC(\M_n^d,\ddot{\mu}_n^*\W_E) \ar[r] & \RGC(\Bun_n^d,\mu_n^*\Aut_E)[d_1-2d_2] \ar[r] &
 \RGC(\Bun_n^d,\mu_n^*\Aut_E)[d_1] \ar[r]^-{+1} &\:}
 \end{align*}
Hence $\RGC(\M_n^d,\ddot{\mu}_n^*\W_E)=0$ is equivalent to
$\RGC(\Bun_n^d,\mu_n^*\Aut_E)=0$ since
$\RGC(\Bun_n^d,\mu_n^*\Aut_E)$ is bounded from above.

Now if $\EL\subset\r{As}(E)$, then there exists $d\geq0$ such that
$\RG(X^{(d)},\r{As}(E)^{(d)})\neq0$ and $2d\geq c_n(\Omega_{X'}^n)$.
Hence by Theorem \ref{the:intro-mira} and the above argument,
$\RGC(\Bun_n^d,\mu_n^*\Aut_E)\neq0$ which implies
$\RGC(\Bun_n^0,\mu_n^*\Aut_E)\neq0$. Conversely, if
$\RGC(\Bun_n^0,\mu_n^*\Aut_E)\neq0$, then pick up some $d$ such that
$2d>\max\(c_n(\Omega_{X'}^n),n^2(4g-4)\)$. We have
$\RG(X^{(d)},\r{As}(E)^{(d)})\neq0$ for such $d$ which forces
$\EL\subset\r{As}(E)$ or
$\EL\subset\r{As}(E)^{\vee}\simeq\r{As}(E^{\vee})$. By Lemma
\ref{lem:asai-conjugate}, $E^{\vee}$ is conjugate orthogonal in the
later case. Hence $E$ is also conjugate orthogonal which, again by
the same lemma, implies that $\EL\subset\r{As}(E)$.\\
\end{num}

\begin{num}
{\em Proof of $(a)\Longleftrightarrow(c)$: Rankin-Selberg.} The
proof of the equivalence of $(a)$ and $(c)$ follows the same line as
above but replacing Theorem \ref{the:intro-mira} by the following
main result of Lysenko, which can be viewed as the split case (\ie,
$X'$ is disconnected).

\begin{theorem}[\cite{Ly}]
For any local systems $E_1$, $E_2$ of rank $n$ on $X$ and any
$d\geq0$, there is a canonical isomorphism:
 \begin{align*}
 \RGC(\M_n^d,\W_{E_1}\otimes\W_{E_2})\overset{\sim}{\lra}\RG(X^{(d)},(E_1\otimes
 E_2)^{(d)})[2d].
 \end{align*}\\
\end{theorem}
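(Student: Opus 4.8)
The plan is to geometrize the classical Rankin--Selberg unfolding for $\r{GL}_n\times\r{GL}_n$: there, a product of two cusp forms integrated over the mirabolic subgroup is expanded through the Whittaker--Fourier expansion and collapses onto the product of local zeta integrals building $L(s,\pi_1\times\pi_2)$, and the symmetric power $(E_1\otimes E_2)^{(d)}$ is exactly the geometric avatar of this $L$-factor. The starting point is that, up to a shift, $\W_{E_i}$ is the restriction to $\M_n$ of $\pi^*\Aut_{E_i}$, and that each $\Aut_{E_i}$ is obtained from Laumon's sheaf $\Lau_{E_i}$ by proper pushforward from a larger stack $\Q_n$ parameterizing a rank-$n$ bundle together with a complete flag of sections whose successive quotients carry the Artin--Schreier datum built from $\AS$. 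First I would pull the whole expression back to the fibre product over $\M_n^d$ of two copies of such a Whittaker stack, so that $\RGC(\M_n^d,\W_{E_1}\otimes\W_{E_2})$ becomes the compactly supported cohomology of an explicit complex built from the two Artin--Schreier twists tensored with exterior products of symmetric powers $E_1^{(\bullet)}\boxtimes E_2^{(\bullet)}$.

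The heart of the argument is the unfolding. I would stratify this doubled Whittaker stack according to the relative position of the two flags of sections and compute the pushforward along the projection to $X^{(d)}$ --- the variety recording the degree-$d$ divisor cut out by the mirabolic section --- stratum by stratum via base change. On every stratum except the one where the two Whittaker structures are in general position, the combined Artin--Schreier twist restricts to a nontrivial linear character along some free affine direction, so the fibrewise cohomology vanishes because $\RGC(\GA,\AS)=0$; this is the geometric incarnation of the vanishing of the off-diagonal terms in the classical unfolding. On the single surviving open stratum the affine fibration is trivial and the two twists combine to the constant sheaf, so the fibrewise cohomology reduces to a shift, and the descent of $E_1\boxtimes E_2$ to $X^{(d)}$ is canonically $(E_1\otimes E_2)^{(d)}$. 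The shift $[2d]$ records the total relative dimension of the affine directions integrated out, computed by Serre duality / Riemann--Roch on $X$.

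Assembling this, the contributions of all non-generic strata vanish, so the pushforward to $X^{(d)}$ reduces to that of the surviving stratum and yields the asserted isomorphism $\RGC(\M_n^d,\W_{E_1}\otimes\W_{E_2})\overset{\sim}{\lra}\RG(X^{(d)},(E_1\otimes E_2)^{(d)})[2d]$ (compactly supported and ordinary cohomology agree on $X^{(d)}$ by properness). The main obstacle will be the bookkeeping in the stratified unfolding, which is precisely what the paper isolates as Proposition \ref{pro:res-coh} in the nonsplit case: one must verify that each non-generic stratum genuinely produces a free Artin--Schreier direction so that its contribution vanishes, track the $\f{S}_d$- and $\f{S}_{2d}$-equivariance with enough care to recover the symmetric power rather than merely the external product, and check that the dimension count assembling $[2d]$ is uniform across the geometry. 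This is where the factorization properties of the symmetric power sheaves and the combinatorics of the permutation/flag data must be used in full.
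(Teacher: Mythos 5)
Your overall architecture --- unfold both factors onto a doubled Whittaker stack over $\M_n^d$, stratify, kill strata by Artin--Schreier vanishing, and identify the survivor with $(E_1\otimes E_2)^{(d)}$ --- is indeed the architecture of Lysenko's proof, which this paper does not reprove but cites and then adapts to the twisted setting in Sections \ref{sec:res}--\ref{sec:direct}. However, two of your central assertions are wrong or missing in ways that would derail the argument. First, the surviving locus is not ``the single open stratum where the two Whittaker structures are in general position''; it is the opposite: the \emph{closed} locus where the two flag structures coincide (the analogue in this paper is the closed embedding $\nu_m:\Qmb^d_m\hra\Qbp^{2d}_m\times_{\Mp^{2d}_m}\M^d_m$ of Proposition \ref{pro:res-res}). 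The mechanism, as in Lemma \ref{lem:res-key} and \ref{num:res-res}, is an iterated Fourier transform: the Artin--Schreier character of one flag structure is linear in the affine moduli of the other via the Serre-duality pairing $\r{Ext}^1(\s{F},\Omega_X)\times\r{H}^0(X,\s{F})\ra\GA$, and pushing $\r{ev}^*\AS$ forward along an affine space kills everything off the orthogonal complement --- a closed incidence condition forcing, step by step, that the second flag agree with the first. General-position strata are exactly the ones that die; this is the geometric form of extracting the matching Fourier coefficient (the $\gamma=e$ term) in the classical unfolding. Relatedly, you never explain why the combined character is \emph{trivial} on the survivor: with both factors naively carrying the same $\AS$, the diagonal would still carry a nontrivial character. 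Classically this is the pairing of $\psi$ with $\psi^{-1}$; geometrically it is the orthogonality clause of Lemma \ref{lem:res-key} (the survivor sits in the subspace on which the pairing vanishes identically), and it must be built into the setup.

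Second, your plan treats each $\W_{E_i}$ as if it were just an Artin--Schreier character sheaf on a flag-type stack, but $\W_{E_i}$ is the pushforward of the sheaf \eqref{eqn:steps-whittaker} on the Drinfeld-type compactification $\Qb_n$, and its behavior on the boundary strata (degenerate flags, where the zero divisors of the sections live) is the crux: there it is governed by Laumon's sheaf $\Lau^d_{E_i}$ through the geometric Casselman--Shalika formula (Lysenko's Proposition 2, quoted here as Proposition \ref{pro:res-lysenko}, which rests on \cite{FGV1}, \cite{Ng}, \cite{NP}). It is these boundary strata, indexed by all $\lambda\in\Lambda^d_{n,-}$ rather than a single stratum, that produce the divisor in $X^{(d)}$ and the local-system data; and identifying their total contribution with $(E_1\otimes E_2)^{(d)}$ is not descent bookkeeping but the Springer-theoretic computation of the direct image of Laumon's sheaf with its $\f{S}_{2d}$-action, together with relative-dimension estimates isolating the top cohomology (the analogues of Corollary \ref{cor:res-grade}, Lemmas \ref{lem:red-dim1}--\ref{lem:red-dim2}, Lemma \ref{lem:direct-dimension} and Proposition \ref{pro:res-coh}, assembled through a filtration that stabilizes for $m\geq n$). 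Without the Casselman--Shalika input and this second computation, your unfolding cannot produce the right-hand side of the theorem.
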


By the same argument in \ref{num:steps-1}, we have the following.

\begin{lem}
For irreducible $E_1$ and $E_2$,
$\RGC(\Bun_n^0,\Aut_{E_1}\otimes\Aut_{E_2})\neq0$ if and only if
$E_2\simeq E_1^{\vee}$.
\end{lem}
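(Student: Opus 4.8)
The plan is to mirror the argument sketched in \ref{num:steps-1} for $(d^{\pm})\Longleftrightarrow(e^{\pm})$, transporting it to the split case where the curve $X'$ is replaced by two disjoint copies of $X$ and the twisted tensor product degenerates into the ordinary external product $E_1\boxtimes E_2$. The key input is now the stated theorem of Lysenko rather than Theorem \ref{the:intro-mira}, but the structural steps are identical. First I would invoke the Hecke property of the automorphic sheaves $\Aut_{E_1}$ and $\Aut_{E_2}$ \cf~\cite{FGV2} Proposition 1.5 to reduce the nonvanishing of $\RGC(\Bun_n^0,\Aut_{E_1}\otimes\Aut_{E_2})$ to the nonvanishing of $\RGC(\Bun_n^d,\Aut_{E_1}\otimes\Aut_{E_2})$ for a single conveniently large $d$, since the Hecke eigenproperty propagates nontriviality across all connected components $\Bun_n^d$ indexed by $d\in\d{Z}$.

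Second, I would pass from the automorphic sheaf on $\Bun_n$ to the mirabolic sheaf on $\M_n$ exactly as in \ref{num:steps-1}. Using cuspidality of $\Aut_{E_i}$ (guaranteed by irreducibility of $E_i$, \cf~\cite{FGV2} Lemma 9.4), the support lands in the finite-type open substack $\c{U}$, so the projection $\pi:\M_n^d\ra\Bun_n^d$ is a vector bundle of positive rank $d_2$ over this locus. The Serre-duality computation of $\pi_!\EL$ then yields a distinguished triangle relating $\RGC(\M_n^d,\W_{E_1}\otimes\W_{E_2})$ to two shifted copies of $\RGC(\Bun_n^d,\Aut_{E_1}\otimes\Aut_{E_2})$. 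Because the latter complex is bounded from above, I can conclude that $\RGC(\M_n^d,\W_{E_1}\otimes\W_{E_2})=0$ if and only if $\RGC(\Bun_n^d,\Aut_{E_1}\otimes\Aut_{E_2})=0$.

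Third, I would combine this with Lysenko's isomorphism $\RGC(\M_n^d,\W_{E_1}\otimes\W_{E_2})\simeq\RG(X^{(d)},(E_1\otimes E_2)^{(d)})[2d]$ to translate the whole question into cohomology of symmetric powers of the local system $E_1\otimes E_2$ on the base curve. For the forward direction, if $E_2\simeq E_1^{\vee}$ then $E_1\otimes E_2$ contains the trivial summand $\EL$, so $\RG(X^{(d)},(E_1\otimes E_2)^{(d)})\neq0$ for some sufficiently large $d$, giving nonvanishing on $\Bun_n^d$ and hence on $\Bun_n^0$. For the converse, I would choose $d$ large enough—precisely $2d>n^2(4g-4)$, matching the threshold in \ref{num:steps-1}—so that the nonvanishing of the bundle cohomology forces $\EL\subset E_1\otimes E_2$, which by irreducibility of both $E_1$ and $E_2$ is equivalent to $E_2\simeq E_1^{\vee}$.

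I expect the main obstacle to be the converse implication, specifically the quantitative choice of $d$: one must ensure $d$ is large enough that $\RG(X^{(d)},(E_1\otimes E_2)^{(d)})$ detects the full socle of $E_1\otimes E_2$ rather than merely some low-degree accident, and simultaneously large enough to place the computation in the finite-type regime where the vector-bundle argument applies. The inequality $2d>\max(c_n,n^2(4g-4))$ that governs this in \ref{num:steps-1} encodes exactly the vanishing of higher cohomology of symmetric powers of a nontrivial irreducible local system on a curve, so the heart of the matter is the standard fact that for an irreducible local system $F$ on $X$ with no trivial constituent, $\RG(X^{(d)},F^{(d)})$ vanishes once $d$ exceeds a bound depending only on $\RK F$ and the genus $g$; isolating the trivial summand of $E_1\otimes E_2$ against this vanishing is what yields the clean dichotomy $E_2\simeq E_1^{\vee}$.
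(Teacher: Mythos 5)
Your proposal is correct and follows essentially the same route as the paper, whose proof of this lemma is literally ``by the same argument in \ref{num:steps-1}'' with Lysenko's theorem substituted for Theorem \ref{the:intro-mira}: Hecke property to move between components, cuspidality plus the vector-bundle triangle to pass to $\M_n^d$, then the symmetric-power cohomology dichotomy on $X^{(d)}$. The only point you elide is that nonvanishing for large $d$ a priori gives $\EL\subset E_1\otimes E_2$ \emph{or} $\EL\subset E_1^{\vee}\otimes E_2^{\vee}$ (the $\R^0$ versus $\R^2$ branch, which in the twisted case the paper must resolve via conjugate orthogonality), but here both branches yield $E_2\simeq E_1^{\vee}$ by Schur's lemma, so no gap results.
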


Applying the above lemma to $X=X'$, $E_1=E$ and $E_2=E^{\sigma}$, we
get the desired equivalence since it is easy to see that
$\sigma_n^*\Aut_E\simeq\Aut_{E^{\sigma}}$ by the
construction.\\
\end{num}

\begin{num}
{\em Proof of the rest.} The equivalence of $(a)$ and $(b)$ is due
to the Hecke property and the fact that
$\D\Aut_E\simeq\Aut_{E^{\vee}}$. Lemma \ref{lem:asai-conjugate}
implies that $(a)\Longleftrightarrow(d^+)\text{ or }(d^-)$. Hence
Main Theorem has been proved.

For Corollary \ref{cor:intro}, (1) is due to the fact that
$\r{As}(E)^{\vee}\simeq\r{As}(E^{\vee})$ and $c(E)=c(E^{\vee})$; (2)
is due to Lemma \ref{lem:asai-conjugate}.\\
\end{num}

\begin{num}\label{num:steps-laumon}
{\em Laumon's sheaf and Whittaker sheaf.} Let us briefly recall the
definition of Laumon's sheaf $\Lau^d_E$. Let $E$ be a local system
on $X'$ of rank $n$ and $d\geq0$. Denote $\Coh^d_n$ the stack
classifying coherent sheaves $\s{F}$ on $X$ of generic rank $n$ and
degree $d$. Inside $\Coh^d_0$, there is an open substack
$\Coh^d_{0,\leq m}$ by the additional condition that the restriction
of $\s{F}$ at any geometric point has dimension $\leq m$. Denote
$\Fl^d_0$ the stack classifying complete flags
$(0=\s{F}_0\subset\s{F}_1\subset\cdots\subset\s{F}_d)$ such that
$\s{F}_i/\s{F}_{i-1}$ is in $\Coh^1_0$. We have morphisms
$\f{p}:\Fl^d_0\ra\Coh^d_0$ remembering $\s{F}_d$ and
$\f{q}:\Fl^d_0\ra\(\Coh^1_0\)^d$ remembering
$(\s{F}_i/\s{F}_{i-1})_{i=1}^d$. Define $\Fl^d_{0,\leq
m}=\Fl^d_0\times_{\Coh^d_0}\Coh^d_{0,\leq m}$. Similarly, we define
for $X'$ the stacks $\Cohp^d_n$, $\Cohp_{0,\leq m}^d$, $\Flp^d_0$,
$\Flp^d_{0,\leq m}$ and $\f{p}'$, $\f{q}'$. Denote
$\r{div}^d_?:\Coh^d_{0,?}\ra X^{(d)}$ or
$\Cohp^d_{0,?}\ra{X'}^{(d)}$ the norm morphism and
$\r{div}_?=\r{div}^1_?$ for $?=\emptyset,\leq m$. Moreover, the
pullback under $\mu$ induces a morphism
$\mu_0:\Coh^d_0\ra\Cohp^{2d}_0$.

We have the following commutative diagram
 \begin{align*}
\xymatrix{
  \Cohp_0^d \ar[d]_-{\r{div}^d} & \Flp_0^d \ar[l]_-{\f{p}'}\ar[r]^-{\f{q}'} &
  \Cohp_0^1\times\cdots\times\Cohp_0^1 \ar[d]^-{\r{div}^{\times d}} \\
  {X'}^{(d)}  && {X'}^d \ar[ll]_-{\r{sym}}   }
 \end{align*}
and define {\em Springer's sheaf}
$\Spr_E^d:={\f{p}'}_!{\f{q}'}^*(\r{div}^{\times
d})^*E^{\boxtimes\:d}$ with a natural action by $\f{S}_d$ and {\em
Laumon's sheaf}
$\Lau_E^d:=\r{Hom}_{\f{S}_d}(\r{triv},\Spr_E^d)$ on $\Cohp_0^d$.\\

For $d\geq0$ and $n\geq1$, we introduce the stack $\Qb^d_n$
classifying the data $(\s{M},(s_i))$, where $\s{M}$ is a vector
bundle on $X$ of rank $n$ and degree $d$ and $s_i$ are injective
homomorphisms of coherent sheaves
 \begin{align*}
 s_i:\Omega_X^{(n-1)+\cdots+(n-i)}\lra\wedge^i\s{M},\qquad i=1,...,n
 \end{align*}
such that they satisfy the Pl\"{u}cker relations \cf~\cite{BG}
Section 1, \cite{FGV1} Section 2, \cite{FGV2} Section 4, or
\cite{Ly} Section 4 for details. Similarly, we define the stack
$\Qbp_n^d$ classifying the data $(\s{M}',(s'_i))$ but now on $X'$.
For our purpose, we need to introduce a twisted version of $\Qb^d_n$
as follows.

Since $\mu$ is \'{e}tale of degree $2$, we have a canonical
decomposition $\mu_*\s{O}_{X'}=\s{O}_X\oplus\s{L}_{\mu}$ for a line
bundle $\s{L}_{\mu}$ on $X$. We also have canonical isomorphisms
$\mu^*\Omega_X\simeq\Omega_{X'}$ and
$\mu^*\s{L}_{\mu}\simeq\s{O}_{X'}$. Let $\Qmb_n^d$ be the stack
classifying the data $(\s{M},(s_i))$ similar to the previous one but
now $s_i$ are injective homomorphisms of coherent sheaves
 \begin{align*}
 s_i:\Omega_X^{(n-1)+\cdots+(n-i)}\otimes\s{L}_{\mu}^{0+\cdots+(i-1)}\lra\wedge^i\s{M},\qquad
 i=1,...,n
 \end{align*}
still satisfying the Pl\"{u}cker relations. In fact, the stack
$\Qmb_n^d$ is nothing but the stack $\overline{\Bun}_N^{\c{F}_T}$
defined in \cite{FGV1} with the $T\simeq\r{GL}_1^n$-bundle $\c{F}_T$
on $X$ corresponding to the $m$-tuple of line bundles
 \begin{align*}
 \(\Omega_X^{n-1},\Omega_X^{n-2}\otimes\s{L}_{\mu},...,\Omega_X\otimes\s{L}_{\mu}^{n-2},\s{L}_{\mu}^{n-1}\).
 \end{align*}
The pullback under $\mu$ induces a closed embedding
$\overline{\mu}_n:\Qmb_n^d\ra\Qbp_n^{2d}$. Moreover, we have the
natural morphism $\pi_n:\Qmb_n^d\ra\M_n^d$ (resp.
$\pi'_n:\Qbp_n^d\ra\Mp_n^d$) by forgetting $s_2,...,s_n$ (resp.
$s'_2,...,s'_n$). Let $\f{p}_n=\pi\circ\pi_n:\Qmb_n^d\ra\Bun_n^d$
(resp. $\f{p}'_n=\pi'\circ\pi'_n:\Qbp_n^d\ra\Bunp_n^d$). We have
morphisms $\f{c}_n:\Qmb_n^d\ra\Coh_{0,\leq n}^d$ (resp.
$\f{c}'_n:\Qbp_n^d\ra\Cohp_{0,\leq n}^d$) sending $(\s{M},(s_i))$
(resp. $(\s{M}',(s'_i))$) to $\wedge^n\s{M}/\IM s_n$ (resp.
$\wedge^n\s{M}'/\IM s'_n$) and $\f{d}_n=\r{div}^d_{\leq
n}\circ\f{c}_n:\Qmb_n^d\ra X^{(d)}$ (resp. $\f{d}'_n=\r{div}_{\leq
n}^d\circ\f{c}'_n:\Qbp_n^d\ra{X'}^{(d)}$). Finally, let
$\Qb_n=\bigsqcup_{d\geq0}\Qb_n^d$,
$\Qbp_n=\bigsqcup_{d\geq0}\Qbp_n^d$ and
$\Qmb_n=\bigsqcup_{d\geq0}\:\Qmb_n^d$.

Inside $\Qmb_n^0$, there is an open substack $j:\Qm_n^0\hra\Qmb_n^0$
classifying the data $(\s{M}_i,r_i)$ where
$0=\s{M}_0\subset\s{M}_1\subset\cdots\subset\s{M}_n$ is a complete
flag of sub-vector bundles and
$r_i:\Omega_X^{n-i}\otimes\s{L}_{\mu}^{i-1}\overset{\sim}{\lra}\s{M}_i/\s{M}_{i-1}$
are isomorphisms for $i=1,...,n$. Similarly, one has $\Qp_n^0$ and
$j':\Qp_n^0\hra\Qbp_n^0$. We define the evaluation map
$\r{ev}:\Qp_n^0\ra\GA$ to be sum of the classes in
$\r{Ext}^1(\Omega_{X'}^{n-i},\Omega_{X'}^{n-i-1})\simeq\GA$ of the
extension
 \begin{align*}
 \xymatrix{
   0 \ar[r] & \s{M}'_i/\s{M}'_{i-1}
    \ar[r] & \s{M}'_{i+1}/\s{M}'_{i-1} \ar[r] &
   \s{M}'_{i+1}/\s{M}'_i \ar[d]^-{{s'_{i+1}}^{-1}}_{\wr} \ar[r] & 0   \\
   &\Omega_{X'}^{n-i}\ar[u]^-{s'_i}_{\wr}&&\Omega_{X'}^{n-i-1}&}
 \end{align*}
for $i=1,...,n-1$. Define $\ASb^0:={j'}_!\r{ev}^*\AS$ as a sheaf on
$\Qbp_n^0$.

Let $\Mod_n^d$ be the stack classifying modifications
$(\s{M}\hra\s{N})$ of rank-$n$ vector bundles on $X$ such that
$\deg(\s{N}/\s{M})=d$. We have morphisms
$\f{h}_{\leftarrow}:\Mod_n^d\ra\Bun_n$ (resp.
$\f{h}_{\ra}:\Mod_n^d\ra\Bun_n$) sending $(\s{M}\hra\s{N})$ to
$\s{M}$ (resp. $\s{N}$) and $\f{c}:\Mod_n^d\ra\Coh_0^d$ sending
$(\s{M}\hra\s{N})$ to $\s{N}/\s{M}$. Similarly, we define
$\Modp_n^d$, $\f{h}'_{\leftarrow}$, $\f{h}'_{\ra}$ and $\f{c}'$.
Consider the following commutative diagram \cf~\cite{FGV2} Section
4.3
 \begin{align*}
 \xymatrix{
   \Qbp_n^0 \ar[d]_-{\f{p}'_n}  & {\overline{\c{Z}}'}^d_n \ar[l]_-{\overline{\f{h}}'_{\leftarrow}}
   \ar[d]_-{\f{q}'_n} \ar[r]^-{\overline{\f{h}}'_{\ra}} & \Qbp_n^d \ar[d]^-{\f{p}'_n} \\
   \Bunp_n^0  & \Modp_n^d \ar[l]_-{\f{h}'_{\leftarrow}}\ar[r]^-{\f{h}'_{\ra}} & \Bunp_n^d   }
 \end{align*}
where ${\overline{\c{Z}}'}^d_n=\Qbp_n^0\times_{\Bunp_n^0}\Modp_n^d$.
The {\em Whittaker sheaf} $\W_{E,n}$ on $\Qbp_n$ is defined by the
formula
 \begin{align}\label{eqn:steps-whittaker}
 \W_{E,n}^d:=\W_{E,n}|_{\Qbp_n^d}:=\overline{\f{h}}'_{\ra!}\({\overline{\f{h}}'_{\leftarrow}}{}^*\(\ASb^0\)
 \otimes(\f{c}'\circ\f{q}'_n)^*\(\Lau_E^d\)\)[2q_n+dn]
 \end{align}
where $q_n:=\DIM\Qm_n^0$.\\

Notice that we have the following commutative diagram
 \begin{align*}
 \xymatrix{
   \Qbp^d_n \ar[d]_-{\pi'_n} \ar[r]^-{\f{d}'_n} & {X'}^{(d)} \ar[d]^-{\alpha} \\
   \Mp^d_n \ar[r]^-{\f{d}'} & \r{Pic}^d_{X'}   }
 \end{align*}
where $\alpha$ is the Abel-Jacobi map and
$\f{d}:\M^d_n\ra\r{Pic}^d_X$ (resp. $\f{d}':\Mp^d_n\ra\r{Pic}_X^d$)
sends $(\s{M},s_1)$ (resp. $(\s{M}',s'_1)$) to the line bundle
$\s{M}\otimes\Omega_X^{-\frac{n(n-1)}{2}}$ (resp.
$\s{M}'\otimes\Omega_{X'}^{-\frac{n(n-1)}{2}}$). Hence there is a
morphism
 \begin{align*}
 \delta'_n:=\f{d}'_n\times\f{d}:\Qbp^{2d}_n\times_{\Mp^{2d}_n}\M^d_n\lra
 {X'}^{(2d)}\times_{\r{Pic}^{2d}_{X'}}\r{Pic}^d_X
 \end{align*}
and a closed embedding $\iota:=(\mu^{(d)},\alpha):X^{(d)}\ra
{X'}^{(2d)}\times_{\r{Pic}^{2d}_{X'}}\r{Pic}^d_X$. The rest sections
are devoted to prove the following.
\begin{proposition}\label{pro:steps-whittaker}
For any local systems $E$ on $X'$ of rank $n$ and $L$ on $X$ of rank
$1$, we have a canonical isomorphism
 \begin{align*}
 {\delta'_n}_!\(\W_{E,n}^{2d}\boxtimes\f{d}^*\sf{A}_L\)\overset{\sim}{\lra}
 \iota_*\(\r{As}(E)\otimes L\)^{(d)}[2d]
 \end{align*}
on ${X'}^{(2d)}\times_{\r{Pic}^{2d}_{X'}}\r{Pic}^d_X$ for all
$d\geq0$.\\
\end{proposition}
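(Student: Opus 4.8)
The plan is to geometrize the Rankin--Selberg unfolding of Lysenko \cite{Ly}, adapting the split-curve argument to the connected double cover $\mu\colon X'\ra X$, so that the twisted tensor product $E\otimes E^\sigma$ (equivalently $\mu^*\r{As}(E)$) plays the role that the external tensor product $E_1\boxtimes E_2$ plays in the split case. The computation splits into the three stages corresponding to Sections \ref{sec:res}, \ref{sec:red} and \ref{sec:direct}. First I would unwind the Whittaker sheaf. Recall that $\W_{E,n}^{2d}$ is assembled from the Artin--Schreier sheaf $\ASb^0={j'}_!\r{ev}^*\AS$ on the open Whittaker locus and from the Laumon sheaf $\Lau_E^{2d}$, transported through the modification stack ${\overline{\c{Z}}'}^{2d}_n$ via $\overline{\f{h}}'_{\leftarrow}$ and $\overline{\f{h}}'_{\ra}$. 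The first step is to push $\W_{E,n}^{2d}\boxtimes\f{d}^*\sf{A}_L$ forward along ${\delta'_n}$ and to invoke cleanness of the Whittaker sheaf (that ${j'}_!\r{ev}^*\AS\simeq{j'}_*\r{ev}^*\AS$, so that the oscillation of $\AS$ forces the non-generic strata to contribute trivially, exactly as in the Jacquet--Piatetskii-Shapiro--Shalika unfolding). This reduces the proposition to an intermediate formula, Proposition \ref{pro:res-coh}, expressing ${\delta'_n}_!(\W_{E,n}^{2d}\boxtimes\f{d}^*\sf{A}_L)$ purely in terms of the direct image of the Laumon sheaf along the norm map, refined over $\r{Pic}$.

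Second, I would evaluate that Laumon-sheaf direct image. By definition $\Lau_E^{2d}=\r{Hom}_{\f{S}_{2d}}(\r{triv},\Spr_E^{2d})$ with $\Spr_E^{2d}={\f{p}'}_!{\f{q}'}^*(\r{div}^{\times 2d})^*E^{\boxtimes 2d}$, so pushing $\Lau_E^{2d}$ forward along the norm map $\r{div}^{2d}\colon\Cohp_0^{2d}\ra{X'}^{(2d)}$ amounts to tracing $E^{\boxtimes 2d}$ through the flag stack $\Flp_0^{2d}$ and taking $\f{S}_{2d}$-invariants. Proper base change through the defining diagram of $\Spr_E^{2d}$ then identifies this pushforward with $E^{(2d)}$ up to a cohomological shift on ${X'}^{(2d)}$. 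Carrying out this identification, together with the compatible bookkeeping of the Pl\"{u}cker data, is the content of Sections \ref{sec:red} and \ref{sec:direct}.

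Third, I would descend from ${X'}^{(2d)}$ to $X^{(d)}$ and read off the Asai local system. The fibre-product source $\Qbp_n^{2d}\times_{\Mp_n^{2d}}\M_n^d$ records Whittaker data on $X'$ whose underlying bundle is pulled back from $X$ under $\mu$; through the divisor maps this pins the support of the pushforward on ${X'}^{(2d)}\times_{\r{Pic}^{2d}_{X'}}\r{Pic}^d_X$ to the image of $\iota=(\mu^{(d)},\alpha)$, namely $X^{(d)}$, which accounts for $\iota_*$ on the right-hand side. Over this locus the restriction of $E^{(2d)}$ is governed by the fibre product ${X'}^{2d}\times_{{X'}^{(2d)}}X^{(d)}$, whose irreducible components and normalization morphism $\f{n}=\bigsqcup_t\f{n}_t$ are precisely those of Lemma \ref{lem:asai-sym}; there the $\sigma$-twist is built into each $\f{n}_t$, realizing $E\otimes E^\sigma$ and hence, by descent, $\r{As}(E)$. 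Applying Lemma \ref{lem:asai-sym}(3) thus turns the restricted symmetric power $E^{(2d)}$ into $\r{As}(E)^{(d)}$. The rank-one factor enters through $\f{d}^*\sf{A}_L$: since the Abel--Jacobi pullback of $\sf{A}_L$ to $X^{(d)}$ is $L^{(d)}$ by geometric class field theory, it threads through the whole computation and yields the twisted symmetric power $(\r{As}(E)\otimes L)^{(d)}[2d]$, giving the stated isomorphism.

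The main obstacle will be the cleanness/unfolding step of Section \ref{sec:res}: proving rigorously that the Artin--Schreier sheaf integrates out and that the direct image isolates exactly the Laumon-sheaf contribution, while simultaneously tracking the Pl\"{u}cker and flag strata through ${\delta'_n}$. It is here that the passage from Lysenko's split case to the connected double cover demands genuinely new work, since one must match the stratification of ${X'}^{2d}\times_{{X'}^{(2d)}}X^{(d)}$ indexed by $\f{T}_{2d}$ against the geometry of ${\overline{\c{Z}}'}^{2d}_n$ and check that the involution $\sigma$ is correctly carried by the morphisms $\f{n}_t$. Once this compatibility is secured, the remaining identifications are the expected proper base-change and symmetrization manipulations, and the cohomological shifts assemble to the single shift $[2d]$.
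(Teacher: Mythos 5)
Your outline reproduces the paper's architecture in broad strokes, but at the two points where the double cover genuinely enters, the mechanism you invoke is either not the one that works or produces a false statement. First, the unfolding step: you attribute the reduction to cleanness, ${j'}_!\r{ev}^*\AS\simeq{j'}_*\r{ev}^*\AS$, ``so that the non-generic strata contribute trivially.'' They do not: by Lysenko's restriction theorem (Proposition \ref{pro:res-lysenko}), the $*$-restriction of $\W^{2d}_{E,n}$ to each stratum $\Qbp^{\rho}_{n}$ is supported on $\Qbp^{\rho}_{n,-}$ and equals the (generally nonzero) sheaf $\ASb^{\rho}\otimes{\f{s}'_-}{}^*E^{\rho}_-$ up to shift. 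What kills the unwanted contributions is integration along $\delta'_n$, and the engine is Lemma \ref{lem:res-key}: for a coherent sheaf $\s{F}$ on $X$, the complex $\r{pr}_{2!}\r{ev}_{\s{F}}^*\AS$ is a shifted delta sheaf on $\r{Hom}(\s{L}_{\mu},\s{F})\subset\r{H}^0(X',\mu^*\s{F})$, which rests on $\r{H}^0(X,\s{L}_{\mu})=0$ and the splitting $\r{H}^0(X',\mu^*\s{F})=\r{H}^0(X,\s{F})\oplus\r{Hom}(\s{L}_{\mu},\s{F})$. This Fourier-type orthogonality under $\mu$, applied inductively through the intermediate stacks $\c{P}^{\rho}_{m,j}$, is what proves that ${\delta'_n}_!\(\W^{2d}_{E,n}\boxtimes\f{d}^*\sf{A}_L\)$ is supported on the descended locus (Proposition \ref{pro:res-res}); it also explains why that locus is the $\s{L}_{\mu}$-twisted stack $\Qmb^d_n$ rather than a naive pullback stack --- a twist your proposal never mentions and which cleanness alone cannot produce.

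Second, your endgame fails as stated. You push $\Lau^{2d}_E$ forward to ${X'}^{(2d)}$, obtain $E^{(2d)}$, restrict along $\iota$, and claim Lemma \ref{lem:asai-sym}(3) converts this into $\r{As}(E)^{(d)}$. But base change for $!$-pushforward gives $\mu^{(d)*}E^{(2d)}\simeq\r{pr}_!\(E^{\boxtimes\:2d}\boxtimes\EL\)^{\f{S}_{2d}}$, the \emph{constant} sheaf on ${X'}^{2d}\times_{{X'}^{(2d)}}X^{(d)}$, whereas Lemma \ref{lem:asai-sym}(3) requires the extra factor $\f{n}_!\EL$ (the IC sheaf); these differ precisely on the diagonals, where the components ${X'}^d_t$, $t\in\f{T}_{2d}$, intersect. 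Concretely, for $n=2$, $d=2$, at $D=2y$ the stalk of $\mu^{(d)*}E^{(2d)}$ has rank $9$ while that of $\r{As}(E)^{(d)}$ has rank $10$, so $\iota^*E^{(2d)}\not\simeq\r{As}(E)^{(d)}$ and your route computes the wrong sheaf. The root of the error is that ``push to ${X'}^{(2d)}$, then restrict to $X^{(d)}$'' is not the same as ``pull back to torsion sheaves on $X$, then push to $X^{(d)}$'': the square formed by $\mu_0:\Coh^d_0\ra\Cohp^{2d}_0$ and $\mu^{(d)}:X^{(d)}\ra{X'}^{(2d)}$ is not Cartesian, since a torsion sheaf on $X'$ whose divisor equals $\mu^*D$ need not descend to $X$. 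The paper accordingly never restricts $E^{(2d)}$: it works throughout with $\mu_0^*\Lau^{2d}_E$ on $\Coh^d_{0,\leq m}$ (Corollary \ref{cor:red-coh}), and the factor $\f{n}_!\EL$ emerges from the geometry of torsion sheaves on $X$ carrying complete flags of their pullbacks to $X'$ (Lemma \ref{lem:direct-dimension}). Finally, even granting both steps, you would still need the stabilization of the truncation filtration $(\r{As}(E)\otimes L)^{(d)}_m$ at $m=n$ (Proposition \ref{pro:res-coh}, Corollary \ref{cor:res-grade}), which your proposal does not address.
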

\end{num}

\begin{num}
{\em Proof of Theorem \ref{the:intro-mira}.} Assuming Proposition
\ref{pro:steps-whittaker}, the proof is immediate once realizing the
following commutative diagram
 \begin{align*}
 \xymatrix{
   \Qbp^{2d}_n\times_{\Mp^{2d}_n}\M^d_n \ar[d]_-{\delta'_n=\f{d}'_n\times\f{d}}
   \ar[rr]^-{\pi'_n\times\r{id}} && \Mp^{2d}_n\times_{\Mp^{2d}_n}\M^d_n=\M^d_n \ar[d]^-{\f{d}} \\
   {X'}^{(2d)}\times_{\r{Pic}^{2d}_{X'}}\r{Pic}^d_X \ar[rr]^-{\alpha\times\r{id}}
   && \r{Pic}^{2d}_{X'}\times_{\r{Pic}^{2d}_{X'}}\r{Pic}^d_X=\r{Pic}^d_X  }
 \end{align*}
and that $\W_E\simeq{\pi'_n}_!\W_{E,n}^{2d}$.\\
\end{num}

\section{Restriction of the Whittaker sheaf}
\label{sec:res}

Let $m\geq1$ be an arbitrary integer and define the Whittaker sheaf
$\W_{E,m}$ by the same formula \eqref{eqn:steps-whittaker} but
replacing $n$ by $m$ (where $E$ is still of rank $n$). Consider the
following commutative diagram
 \begin{align*}
 \xymatrix{
   \Qmb^d_m \ar[d]_-{\f{d}_m} \ar@/^1pc/[rrd]_-{\delta_m} \ar[rr]^-{(\overline{\mu}_m,\pi_m)}
   && \Qbp^{2d}_m\times_{\Mp^{2d}_m}\M^d_m \ar[d]^-{\delta'_m} \\
   X^{(d)} \ar[rr]^-{\iota} &&   {X'}^{(2d)}\times_{\r{Pic}^{2d}_{X'}}\r{Pic}^d_X }
 \end{align*}
and let $\nu_m=(\overline{\mu}_m,\pi_m)$,
$\delta_m=\delta'_m\circ\nu_m$. Proposition
\ref{pro:steps-whittaker} will follow from the following two
propositions and this section is dedicated to prove the first one.

\begin{proposition}\label{pro:res-res}
For any local systems $E$ on $X'$ of rank $n$ and $L$ on $X$ of rank
$1$, $m\geq1$, $d\geq0$, the natural map
 \begin{align*}
 {\delta'_m}_!\(\W_{E,m}^{2d}\boxtimes\f{d}^*\sf{A}_L\)\lra\delta_{m!}\nu_m^*\(\W_{E,m}^{2d}\boxtimes\f{d}^*\sf{A}_L\)
 \end{align*}
is an isomorphism.\\
\end{proposition}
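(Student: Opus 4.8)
The plan is to present the map as the one coming from a recollement (open/closed) triangle on $P:=\Qbp^{2d}_m\times_{\Mp^{2d}_m}\M^d_m$ and then to annihilate the open part by an Artin-Schreier vanishing. First I would check that $\nu_m=(\overline{\mu}_m,\pi_m)$ is a closed immersion: its composite with the first projection $P\to\Qbp^{2d}_m$ is the closed immersion $\overline{\mu}_m$, and $\nu_m$ is a monomorphism (the $\M^d_m$-component is redundant once $\overline{\mu}_m$ is fixed), so as the first projection is separated, $\nu_m$ is a proper monomorphism, hence a closed immersion; equivalently its image is the closed locus where the higher Pl\"{u}cker sections descend from $X$. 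Writing $K=\W_{E,m}^{2d}\boxtimes\f{d}^*\sf{A}_L$ and using $\nu_{m!}=\nu_{m*}$, the asserted map is exactly $\delta'_{m!}$ applied to the unit $K\to\nu_{m*}\nu_m^*K$. Letting $j:V\hra P$ be the open complement of $\nu_m(\Qmb^d_m)$ and applying $\delta'_{m!}$ to the distinguished triangle $j_!j^*K\to K\to\nu_{m*}\nu_m^*K\to$, I reduce the Proposition to the vanishing $(\delta'_m\circ j)_!(K|_V)=0$.

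Next I would describe $V$ concretely, to expose the fibration that makes this vanishing visible. A point of $P$ forces $\s{M}'\simeq\mu^*\s{M}$ and $s'_1=\mu^*s_1$ (the fiber-product condition over $\Mp^{2d}_m$), while it lies in $\nu_m(\Qmb^d_m)$ precisely when the remaining sections $s'_2,\dots,s'_m$ on $X'$ also descend from $X$. Using the \'{e}tale decomposition $\mu_*\s{O}_{X'}=\s{O}_X\oplus\s{L}_{\mu}$ together with $\mu^*\Omega_X\simeq\Omega_{X'}$ and $\mu^*\s{L}_{\mu}\simeq\s{O}_{X'}$, adjunction splits each relevant section space as $\r{Hom}_X(-,\wedge^i\s{M})\oplus\r{Hom}_X(-,\wedge^i\s{M}\otimes\s{L}_{\mu})$, the second \emph{anti-invariant} summand measuring the failure to descend. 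Thus $V$ is the locus on which some anti-invariant component is nonzero, and I would stratify $V$ and fiber it, relative to $\delta'_m$, along these anti-invariant affine directions; note that $\s{M}$, and hence $\f{d}$ and the harmless twist $\f{d}^*\sf{A}_L$, are unchanged along such directions.

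The heart of the matter, and the step I expect to be the main obstacle, is to transport the Artin-Schreier factor of $\W_{E,m}^{2d}$ through its construction and show it varies nontrivially along these fibers. Recall $\W_{E,m}^{2d}$ is built from $\ASb^0={j'}_!\r{ev}^*\AS$ via the modification correspondence $(\overline{\f{h}}'_{\leftarrow},\overline{\f{h}}'_{\ra})$ and a twist by $\Lau^d_E$. I would show that within each stratum of $V$ one can isolate an affine direction, coming from the anti-invariant part of a single $s'_i$, along which the data controlling the Laumon factor and $\f{d}^*\sf{A}_L$ stay constant while the class in $\r{Ext}^1(\Omega_{X'}^{n-i},\Omega_{X'}^{n-i-1})$ feeding $\r{ev}$ varies affine-linearly and nonconstantly. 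Along such a $\GA$-direction $K|_V$ is a constant sheaf tensored with a nontrivial pullback of $\AS$, so $\RGC(\GA,\AS)=0$ kills the partial pushforward; assembling over the stratification yields $(\delta'_m\circ j)_!(K|_V)=0$. Arranging the stratification so that one character-direction is isolated at each stage, compatibly with the Pl\"{u}cker relations and the modification correspondence, is the genuinely delicate bookkeeping, after which the formal steps above close the argument.
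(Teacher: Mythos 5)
Your formal frame is sound and agrees with the paper's own reformulation: granting that $\nu_m$ is a closed embedding (which the paper asserts stratum by stratum), the recollement triangle reduces the Proposition to the vanishing of $\delta'_{m!}$ applied to the restriction of $\W_{E,m}^{2d}\boxtimes\f{d}^*\sf{A}_L$ to the open complement of $\nu_m(\Qmb^d_m)$. The gap lies precisely in the step you flag as the main obstacle, and it is twofold. First, ``transporting the Artin--Schreier factor through the construction'' of $\W^{2d}_{E,m}$ is not bookkeeping: $\W^{2d}_{E,m}$ is defined by a pushforward $\overline{\f{h}}'_{\ra!}$ along the modification correspondence, and the statement you need --- that its restriction to each stratum $\Qbp^{\rho}_m$ is supported on $\Qbp^{\rho}_{m,-}$ and is there isomorphic to $\ASb^{\rho}\otimes{\f{s}'_-}{}^*E^{\rho}_-$ up to shift --- is exactly Proposition \ref{pro:res-lysenko}, a theorem of Lysenko whose proof rests on the geometric Casselman--Shalika formula. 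The paper's proof of the present Proposition begins by invoking it; without it you have no factorization of your sheaf on the bad locus into an Artin--Schreier character part and a part pulled back from divisor data, hence nothing to integrate.

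Second, even granting that factorization, the affine directions you choose would not work. On a stratum, the Laumon input ${\f{s}'}^*E^{\rho}_-$ and the maps $\f{s}',\f{d}'_m$ are functions of the divisors $(D'_i)$, and $D'_i$ is the zero divisor of $s'_i$; an affine-linear variation of (the anti-invariant part of) $s'_i$ moves $D'_i$, hence moves the point in ${X'}^{\rho}_-$, the Laumon stalk, and possibly the image under $\delta'_m$. So your requirement that ``the data controlling the Laumon factor and $\f{d}^*\sf{A}_L$ stay constant while the class feeding $\r{ev}$ varies nonconstantly'' cannot be met along such directions. The directions that do work, and that the paper uses, are extension classes on $X$: the stack $\c{P}^{\rho}_{m,j}$ fibers over $\overline{\c{R}}^{\rho}_{m,j}$ by taking the quotient by $\s{M}_j$, so the fiber directions vary $\s{M}$ itself through extensions $0\ra\s{M}_j\ra\s{M}\ra\s{N}\ra0$ on $X$, keeping all divisors, the Laumon stalk, $\det\s{M}$ and hence $\delta'_m$ fixed; along these directions $\r{ev}$ is affine-linear with frequency the section of $\mu^*\s{N}$ giving the next flag step, and Lemma \ref{lem:res-key} (Serre duality on $X'$ together with $\r{H}^0(X,\s{L}_{\mu})=0$) says that the pushforward is supported exactly where that section lies in $\r{Hom}(\s{L}_{\mu},\cdot)$, i.e.\ descends with the $\s{L}_{\mu}$-twist. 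Relatedly, your description of the closed locus is off: the image of $\nu_m$ is not where the $s'_i$ descend in the naive (invariant) sense, but where $s'_i$ lies in the summand $\r{Hom}_X\(\Omega_X^{(n-1)+\cdots+(n-i)},\wedge^i\s{M}\otimes\s{L}_{\mu}^{0+\cdots+(i-1)}\)$, which is the invariant or the anti-invariant summand according to the parity of $i(i-1)/2$; it is precisely this twisted condition that the Fourier-type mechanism of Lemma \ref{lem:res-key} produces, and it is the reason the twisted stack $\Qmb^d_m$, rather than an untwisted one, appears in the statement.
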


\begin{proposition}\label{pro:res-coh}
Let notations be as above, there is a canonical isomorphism
 \begin{align*}
 \f{d}_{m!}\nu_m^*\(\W_{E,m}^{2d}\boxtimes\f{d}^*\sf{A}_L\)\overset{\sim}{\lra}(\r{As}(E)\otimes
 L)^{(d)}_m[2d]
 \end{align*}
on $X^{(d)}$, where $0=(\r{As}(E)\otimes
 L)^{(d)}_0\subset(\r{As}(E)\otimes
 L)^{(d)}_1\subset\cdots\subset(\r{As}(E)\otimes
 L)^{(d)}_m\subset\cdots$ is a filtration of $(\r{As}(E)\otimes
 L)^{(d)}$ such that $(\r{As}(E)\otimes
 L)^{(d)}_n=(\r{As}(E)\otimes
 L)^{(d)}$.\\
\end{proposition}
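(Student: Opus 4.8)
The plan is to follow the geometrized Rankin--Selberg computation of Lysenko \cite{Ly}, adapting it from the split tensor-product setting to the twisted tensor product carried by $\r{As}(E)$; the argument accordingly splits into a reduction (Section \ref{sec:red}) and a direct-image calculation (Section \ref{sec:direct}). The starting point is that $E$ enters $\W_{E,m}^{2d}$ only through the Laumon sheaf $\Lau_E^{2d}$ on $\Cohp_0^{2d}$, pulled back along the torsion-quotient map, and that for $\overline{\mu}_m$ this factors through $\mu_0:\Coh_0^d\ra\Cohp_0^{2d}$. So the first step is a sheaf-theoretic identity computing $\mu_0^*\Lau_E^{2d}$, equivalently $\mu_0^*\Spr_E^{2d}$ together with its $\f{S}_{2d}$-action. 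A length-$d$ torsion sheaf $\s{F}$ on $X$ pulls back under the degree-$2$ \'{e}tale map to a $\sigma$-invariant length-$2d$ torsion sheaf on $X'$, so the fibre of $\mu_0^*\Spr_E^{2d}$ is governed by $E$ on exactly the $\sigma$-symmetric configurations parametrized by ${X'}^{2d}\times_{{X'}^{(2d)}}X^{(d)}$. This is the sheaf-level incarnation of the combinatorics of \ref{num:asai-sym}, and it is what will convert $E^{\boxtimes 2d}$ into the symmetrized object $\(\(E^{\boxtimes\:2d}\boxtimes\EL\)\otimes\f{n}_!\EL\)^{\f{S}_{2d}}$ of Lemma \ref{lem:asai-sym}(3).

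Granting this, Section \ref{sec:direct} would carry out $\f{d}_{m!}$. I would stratify the fibres of $\f{d}_m$ by the combinatorial type of the divisor in $X^{(d)}$ together with the induced flag and modification data, so that the pushforward decomposes stratum by stratum. The Whittaker structure enters through ${\overline{\f{h}}'_{\leftarrow}}{}^*\ASb^0$: via the evaluation map $\r{ev}$, the vanishing $\RGC(\GA,\AS)=0$ annihilates every configuration whose flag degenerates at some point of the divisor, the geometric analogue of the Jacquet--Piatetskii-Shapiro--Shalika unramified calculation. The surviving terms are then organized according to the model $\b{X}_d=\bigsqcup_{\f{T}_{2d}}{X'}^d_t$ and reassemble, through Lemma \ref{lem:asai-sym}(3), into $\r{As}(E)^{(d)}$; the factor $\f{d}^*\sf{A}_L$ contributes the rank-one twist $L^{(d)}$ via the Abel--Jacobi map, giving $(\r{As}(E)\otimes L)^{(d)}[2d]$. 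Here Proposition \ref{pro:res-res}, already established, is what transfers the computation from the full fibre product to $\Qmb_m^d$, handling the non-properness of the pushforwards.

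The increasing filtration indexed by $m$ should emerge from this same stratification: the parameter $m$ enters only through $\f{c}'_m$ landing in $\Cohp_{0,\leq m}^{2d}$, i.e.\ through a bound on the fibre dimension of the torsion quotient, so enlarging $m$ admits more local modifications and realizes a larger subobject of $(\r{As}(E)\otimes L)^{(d)}$. The identity $(\r{As}(E)\otimes L)^{(d)}_n=(\r{As}(E)\otimes L)^{(d)}$ reflects that the pieces contributed are Schur functors of $E$, which vanish once more than $n=\RK E$ rows are demanded, so $m\geq n$ imposes no further restriction. I expect the main obstacle to be twofold. First, pinning down $\mu_0^*\Lau_E^{2d}$ with its full $\f{S}_{2d}$-equivariance: the \'{e}tale pullback interleaves the Springer/Schur-functor decomposition of $\Lau_E$ with the diagonal combinatorics of $\f{T}_{2d}$, and the attendant signs and symmetrizations must be matched exactly. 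Second, verifying that the filtration produced by the direct-image stratification is literally the asserted filtration on $(\r{As}(E)\otimes L)^{(d)}$, not merely its associated graded. Since the Artin--Schreier vanishing does the analytic work, the real difficulty is the combinatorial bookkeeping at these two ends.
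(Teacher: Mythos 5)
Your overall architecture is the paper's: reduce $\f{d}_{m!}\nu_m^*\(\W_{E,m}^{2d}\boxtimes\f{d}^*\sf{A}_L\)$ to the top compactly supported direct image of $\mu_0^*\Lau_E^{2d}$ along $\r{div}^d_{\leq m}$ (Corollary \ref{cor:red-coh}), compute that direct image by stratifying $\Flp^{2d}_0\times_{\Cohp^{2d}_0}\Coh^d_0$ by $\f{T}_{2d}$ (Lemma \ref{lem:direct-dimension}) and reassemble through Lemma \ref{lem:asai-sym}(3), exactly as in your first step. Your treatment of the filtration also matches: in the paper $(\r{As}(E)\otimes L)^{(d)}_m$ is \emph{defined} as the image of the injection $\(\R^{-2d}\r{div}^d_{\leq m!}\mu_0^*\Lau_E^{2d}\)\otimes L^{(d)}\hra\(\R^{-2d}\r{div}^d_{\;!}\mu_0^*\Lau_E^{2d}\)\otimes L^{(d)}\simeq\(\r{As}(E)\otimes L\)^{(d)}$ coming from the open substack $\Coh^d_{0,\leq m}\subset\Coh^d_0$, so your second anticipated obstacle (that one might only control the associated graded) dissolves by construction; and stabilization at $m\geq n$ is Corollary \ref{cor:res-grade}, whose input is Laumon's theorem that $E^{2\lambda}_-$ vanishes unless $\lambda_1=\cdots=\lambda_{m-n}=0$ --- the precise form of your Schur-functor heuristic.

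The genuine gap is the mechanism you assign to the Artin--Schreier sheaf in the reduction step. You assert that $\RGC(\GA,\AS)=0$ ``annihilates every configuration whose flag degenerates at some point of the divisor,'' which is the split-case (Lysenko/JPSS) mechanism; in the twisted case this is neither what happens nor sufficient. After $\nu_m^*$ the bundle lives on $X$, but $\W_{E,m}^{2d}$ is still a pushforward from a space of torsion flags on $X'$, and the configurations that must be eliminated are perfectly non-degenerate flags that fail to descend to $X$. The paper eliminates them with Lemma \ref{lem:res-key}: integrating $\AS$ over only the subspace $\r{Ext}^1(\s{F},\Omega_X)\subset\r{Ext}^1(\mu^*\s{F},\Omega_{X'})$ yields the constant sheaf on the annihilator $\r{Hom}(\s{L}_{\mu},\s{F})\subset\r{H}^0(X',\mu^*\s{F})$ (using $\r{H}^0(X,\s{L}_{\mu})=0$), i.e.\ a partial Fourier transform localizing onto $\sigma$-descended data, on which $\AS$ moreover restricts trivially. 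Applied inductively along the iterated modification spaces $\Yt^{2d}_{m,j}$, this gives Proposition \ref{pro:red-red}, namely $\R^{2(q_m+(m-1)d)}\widetilde{\b{d}}^{\circ}_{m!}\ASt^{2d}\simeq\R^{2(q_m+(m-1)d)}\widetilde{\b{d}}^{\circ}_{m,m!}\EL$, and it is this localization --- not a genericity argument --- that converts the Whittaker pushforward into the Laumon pushforward; only afterwards does your $\f{T}_{2d}$/$\b{X}_d$ bookkeeping take over. Note also that citing Proposition \ref{pro:res-res} cannot supply this: that proposition compares ${\delta'_m}_!$ with $\delta_{m!}\nu_m^*$, whereas Proposition \ref{pro:res-coh} begins after $\nu_m^*$ is applied, so the descent localization must still be performed inside the computation of $\f{d}_{m!}$. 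In short, you have the division of labor backwards: the $\sigma$-symmetrization is enforced by the Artin--Schreier step itself, and that step is where the twisted case genuinely departs from Lysenko rather than being routine analytic input.
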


\begin{num}
{\em Stratifications-I.} To proceed, we recall stratifications
defined on $\Qbp_m^d$ and $\Qmb_m^d$, respectively.

For any $\lambda\in\Lambda_{m,\r{pos}}^d$, let $\Qbp_m^{\lambda}$ be
the stack classifying the data $(\s{M}',(s'_i),(D'_i))$ where
$\s{M}'$ is a vector bundle on $X'$ of rank $m$,
$(D'_1,D'_1+D'_2,...,D'_1+\cdots+D'_m)\in{X'}^{\lambda}$
\cf~\ref{num:intro-notations}, and $s'_i$ ($i=1,...,m$) is an
inclusion of {\em vector bundles}
 \begin{align*}
 s'_i:\Omega_{X'}^{(n-1)+\cdots+(n-i)}(D'_1+\cdots+D'_i)\lra\wedge^i\s{M}'
 \end{align*}
such that $(s'_1,...,s'_m)$ satisfies the Pl\"{u}cker relations. We
have a natural morphism $\Qbp_m^{\lambda}\ra\Qbp_m^d$. It was shown
in \cite{BG} that each $\Qbp_m^{\lambda}$ becomes a locally closed
substack and they together form a stratification of $\Qbp_m^d$ for
all $\lambda\in\Lambda_{m,\r{pos}}^d$. We have a natural morphism
$\f{s}':\Qbp_m^{\lambda}\ra{X'}^{\lambda}$ by remembering
$(D'_1,D'_1+D'_2,...,D'_1+\cdots+D'_m)$ and define
$\Qbp_{m,?}^{\lambda}=\Qbp_m^{\lambda}\times_{{X'}^{\lambda}}{X'}^{\lambda}_?$
which are closed substacks of $\Qbp_m$,
$\f{s}'_?=\f{s}'|_{\Qbp_{m,?}^{\lambda}}$ for $?=\r{eff},+,-$.

For any $\lambda\in\Lambda_{m,\r{pos}}^d$, let $\Qmb_m^{\lambda}$ be
the stack classifying the data $(\s{M},(s_i),(D_i))$ where $\s{M}$
is a vector bundle on $X$ of rank $m$, $(D_1,...,D_1+\cdots+D_m)\in
X^{\lambda}$, and $s_i$ is an inclusion of vector bundles
 \begin{align*}
 s_i:\Omega_X^{(n-1)+\cdots+(n-i)}\otimes\s{L}_{\mu}^{0+\cdots+(i-1)}(D_1+\cdots+D_i)\lra\wedge^i\s{M}
 \end{align*}
such that $(s_1,...,s_m)$ satisfies the Pl\"{u}cker relations. As in
the previous case, we have a natural morphism
$\Qmb_m^{\lambda}\ra\Qmb_m^d$ such that $\Qmb_m^{\lambda}$ becomes a
locally closed substack, and they together form a stratification of
$\Qmb_m^d$. We have a natural morphism $\f{s}:\Qmb_m^{\lambda}\ra
X^{\lambda}$ and define
$\Qmb_{m,?}^{\lambda}=\Qmb_m^{\lambda}\times_{X^{\lambda}}X^{\lambda}_?$,
which are closed substacks of $\Qmb_m$,
$\f{s}_?=\f{s}|_{\Qmb_{m,?}^{\lambda}}$ for $?=\r{eff},+,-$.

The pullback under $\mu$ again induces a natural morphism
$\overline{\mu}_m^{\lambda}:\Qmb_{m,?}^{\lambda}\ra\Qbp_{m,?}^{2\lambda}$
and hence
$\nu_m^{\lambda}:\Qmb_{m,?}^{\lambda}\ra\Qbp_{m,?}^{2\lambda}\times_{\Mp^{2d}_m}\M^d_m$
for $?=\emptyset,\r{eff},+,-$ which are all closed embeddings, such
that the following diagram commutes
 \begin{align*}
 \xymatrix{
   \Qmb_{m,?} \ar@/_4pc/[dd]^-{\f{d}_m} \ar[d]_-{\f{s}} \ar[rr]^-{\nu_m^{\lambda}} &&
   \Qbp_{m,?}^{2\lambda}\times_{\Mp^{2d}_m}\M^d_m \ar[d]_-{\f{s}'\times\f{d}} \ar@/^4pc/[dd]^-{\f{d}'_m\times\f{d}'}\\
   X^{\lambda}_? \ar[d]_-{\r{sym}^{\lambda}} \ar[rr]^-{(\mu^{\lambda},\alpha\circ\r{sym}^{\lambda})}
   && {X'}_?^{2\lambda}\times_{\r{Pic}_{X'}^{2d}}\r{Pic}_X^d \ar[d]_-{\r{sym}^{2\lambda}\times\r{id}}\\
   X^{(d)} \ar[rr]^-{\iota} && {X'}^{(2d)}\times_{\r{Pic}_{X'}^{2d}}\r{Pic}_X^d. }
 \end{align*}

For $\lambda\in\Lambda_{m,-}^d$, the stack $\Qbp_{m,-}^{\lambda}$
will be nonempty and it equivalently classifies the data
$((\s{M}'_i),(r'_i),(D'_i))$ where
$0=\s{M}'_0\subset\s{M}'_1\subset\cdots\subset\s{M}'_m=\s{M}'$ is a
complete flag of sub-vector bundles of $\s{M}'$, $D'_i$ is as above
but with $0\leq D'_1\leq\cdots\leq D'_m$, and $r'_i$ is an
isomorphism
 \begin{align*}
 r'_i:\Omega_{X'}^{n-i}(D'_i)\overset{\sim}{\lra}\s{M}'_i/\s{M}'_{i-1}
 \end{align*}
for $i=1,...,m$. Let $\r{ev}^{\lambda}:\Qbp_{m,-}^{\lambda}\ra\GA$
be the morphism sending the above data to the sum of $m-1$ classes
in
$\r{Ext}^1(\Omega_{X'}^{n-i-1}(D'_i),\Omega_{X'}^{n-i}(D'_i))\simeq\GA$
corresponding to the pullbacks of the extensions
 \begin{align*}
 \xymatrix{
   0 \ar[r] & \s{M}'_i/\s{M}'_{i-1}
    \ar[r] & \s{M}'_{i+1}/\s{M}'_{i-1} \ar[r] &
   \s{M}'_{i+1}/\s{M}'_i \ar[d]^-{{s''_{i+1}}^{-1}}_{\wr} \ar[r] & 0   \\
   &\Omega_{X'}^{n-i}(D'_i)\ar[u]^-{s''_i}_{\wr}&&\Omega_{X'}^{n-i-1}(D'_{i+1})&}
 \end{align*}
under the inclusion
$\Omega_{X'}^{n-i-1}(D'_i)\hra\Omega_{X'}^{n-i-1}(D'_{i+1})$ for
$i=1,...,m-1$. Finally, let $\ASb^{\lambda}=\r{ev}^{\lambda*}\AS$ be
a local system on $\Qbp_{m,-}^{\lambda}$.\\
\end{num}

\begin{num}
{\em Pullback of Laumon's sheaf.} For $\lambda\in\Lambda_{m,-}^d$,
the scheme $X^{\lambda}_-$ classifies $(D_1,...,D_m)$ where $D_i$ is
a divisor on $X$ of degree $\lambda_i$ and $0\leq D_1\leq\cdots\leq
D_m$. Let $\f{c}^{\lambda}:X^{\lambda}_-\ra\Coh^d_{0,\leq m}$ be the
morphism sending $(D_1,...,D_m)$ to
 \begin{align*}
 \Omega_X^{m-1}(D_1)/\Omega_X^{m-1}\oplus\Omega_X^{m-2}\otimes\s{L}_{\mu}(D_2)/\Omega_X^{m-2}\otimes\s{L}_{\mu}
 \oplus\cdots\oplus\s{L}_{\mu}^{m-1}(D_m)/\s{L}_{\mu}^{m-1}.
 \end{align*}
Similarly, we have a morphism
${\f{c}'}^{\lambda}:{X'}^{\lambda}_-\ra\Cohp^d_{0,\leq m}$ sending
$(D'_1,...,D'_m)$ to
 \begin{align*}
 \Omega_{X'}^{m-1}(D'_1)/\Omega_{X'}^{m-1}\oplus\Omega_{X'}^{m-2}(D'_2)/\Omega_{X'}^{m-2}
 \oplus\cdots\oplus\s{O}_{X'}^{m-1}(D'_m)/\s{O}_{X'}^{m-1}.
 \end{align*}
By \cite{La1} Th\'{e}or\`{e}me 3.3.8, the restriction
${\f{c}'}^{\lambda*}\Lau_E^d$ of Laumon's sheaf has maximal
(possible) cohomological degree (with respect to the usual
t-structure) $2d(\lambda)$, where
$d(\lambda):=\sum_{i=1}^m(m-i)\lambda_i$. Its highest cohomology
sheaf
 \begin{align*}
 E^{\lambda}_-:=\r{H}^{2d(\lambda)}{\f{c}'}^{\lambda*}\Lau_E^d
 \end{align*}
does not vanish if and only if $\lambda_1=\cdots=\lambda_{m-n}=0$
(which is an empty condition if $m\leq n$). See \cite{La1} for a
description of the stalks of $E^{\lambda}_-$. The following
proposition proved by Lysenko \cite{Ly} Proposition 2 will play a
key role later, of which the proof uses the geometric
Casselman-Shalike formula for general linear groups \cf~\cite{FGV1},
\cite{Ng} and \cite{NP}.

\begin{proposition}\label{pro:res-lysenko}\footnote[2]{A similar result on the restriction of Whittaker sheaves on
different strata is proved in \cite{FGV2} Proposition 4.12.} The
restriction $\W_{E,m}^d|_{\Qbp_m^{\lambda}}$ is supported on
$\Qbp_{m,-}^{\lambda}$, and its further restriction to the later one
is isomorphic to
 \begin{align*}
 \ASb^{\lambda}\otimes{\f{s}'_-}{}^*E^{\lambda}_-[2q_m+md-2d(\lambda)].\\
 \end{align*}
\end{proposition}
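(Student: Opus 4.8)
This proposition is, up to notation, Lysenko's computation \cite{Ly} Proposition 2 applied to the curve $X'$ and the rank-$n$ local system $E$; accordingly the plan is to reproduce his argument, isolating the two inputs that carry the real content. First I would unwind the definition \eqref{eqn:steps-whittaker}: since $\ASb^0={j'}_!\r{ev}^*\AS$ is extended by zero from the open Whittaker locus $\Qp_m^0\hra\Qbp_m^0$, the functor $\overline{\f{h}}'_{\ra!}$ only receives contributions from points of ${\overline{\c{Z}}'}^d_m$ whose $\Qbp_m^0$-component is a genuine complete flag on which $\r{ev}$ is defined. Restricting to a stratum $\Qbp_m^{\lambda}$ and applying base change for $\overline{\f{h}}'_{\ra}$, one reduces to analyzing, fiber by fiber, the $\overline{\f{h}}'_{\ra}$-pushforward of the integrand ${\overline{\f{h}}'_{\leftarrow}}{}^*\ASb^0\otimes(\f{c}'\circ\f{q}'_m)^*\Lau_E^d$ over the space of degree-$0$ sub-modifications compatible with the flag data recorded by $\lambda$ and the divisors $(D'_i)$.

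For the support statement I would run the Artin--Schreier oscillation argument. On the fibers over $\Qbp_m^{\lambda}$ the relevant modifications sweep out affine spaces, and the pulled-back evaluation $\r{ev}$ restricts to affine-linear functions on these spaces; integrating $\AS$ along any affine line on which such a function is nonconstant yields zero. Tracking which functionals are forced to vanish confines the divisors $(D'_i)$ to the closed locus ${X'}^{\lambda}_-$ (nonempty precisely for $\lambda\in\Lambda_{m,-}^d$), so that $\W_{E,m}^d|_{\Qbp_m^{\lambda}}$ is supported on $\Qbp_{m,-}^{\lambda}$. Conceptually this is the geometric shadow of the Casselman--Shalika support theorem, that Whittaker functions are supported on dominant coweights.

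To compute the restriction to $\Qbp_{m,-}^{\lambda}$ itself I would feed in the two structural results. Laumon's theorem \cite{La1} Th\'{e}or\`{e}me 3.3.8 shows that $\Lau_E^d$, pulled back along the relevant cokernel map, is concentrated in cohomological degrees $\leq 2d(\lambda)$ with top sheaf $E^{\lambda}_-$; the geometric Casselman--Shalika formula (\cite{FGV1}, \cite{Ng}, \cite{NP}) then guarantees that the $\overline{\f{h}}'_{\ra}$-pushforward over the modification directions collapses precisely onto this top cohomology, multiplied by the surviving Artin--Schreier factor $\ASb^{\lambda}$. Assembling these and tracking the shift — the defining shift $2q_m+md$ reduced by the degree $2d(\lambda)$ in which $E^{\lambda}_-$ sits — produces the asserted isomorphism $\ASb^{\lambda}\otimes{\f{s}'_-}{}^*E^{\lambda}_-[2q_m+md-2d(\lambda)]$.

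The main obstacle is exactly the Casselman--Shalika step: showing that no lower-degree cohomology of $\Lau_E^d$ survives the pushforward and that the Artin--Schreier twist emerges in the clean form $\ASb^{\lambda}$. This is the content of the geometric Casselman--Shalika formula for $\r{GL}_m$, controlling the Whittaker-averaged direct image of the intersection cohomology sheaf on the affine Grassmannian, and it is the one place where a genuine computation — rather than formal sheaf-theoretic manipulation — is unavoidable. Since $E$ lives on $X'$ and $\W_{E,m}^d$ is the untwisted Whittaker sheaf on the $X'$-side, no $\s{L}_{\mu}$-twist intervenes here, so Lysenko's argument transports essentially verbatim.
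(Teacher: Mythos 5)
Your proposal is correct and coincides with the paper's treatment: the paper does not reprove this statement but quotes it as Lysenko's Proposition 2 from \cite{Ly}, noting only that its proof rests on the geometric Casselman--Shalika formula of \cite{FGV1}, \cite{Ng}, \cite{NP}, which are exactly the ingredients (Artin--Schreier oscillation for the support, Laumon's Th\'{e}or\`{e}me 3.3.8 for the top cohomology sheaf $E^{\lambda}_-$, Casselman--Shalika for the collapse onto it) that you identify. Your closing observation that the statement lives entirely over $X'$, so that no $\s{L}_{\mu}$-twist enters and Lysenko's argument transports verbatim, is precisely what justifies the paper's citation.
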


The following simple lemma will be important in the later proof.

\begin{lem}\label{lem:res-key}
Let $\s{F}$ be a coherent sheaf on $X$ and consider the paring
 \begin{align*}
 \r{ev}_{\s{F}}:\:&\r{Ext}^1(\s{F},\Omega_X)\times\r{H}^0(X',\mu^*\s{F})\overset{i^1_+\times\r{id}}{\lra}
 \r{Ext}^1(\mu^*\s{F},\Omega_{X'})\times\r{H}^0(X',\mu^*\s{F})\\
 &\lra\r{H}^1(X',\Omega_{X'})\simeq\GA.
 \end{align*}
Then
 \begin{align*}
 \r{pr}_{2!}{\r{ev}_{\s{F}}}^*\AS={i^0_-}_!\EL[-2\DIM\r{H}^0(X,\s{F})]
 \end{align*}
where $\r{pr}_i$ is the projection to the $i$-th factor,
$i^1_+:\r{Ext}^1(\s{F},\Omega_X)\hra\r{Ext}^1(\mu^*\s{F},\Omega_{X'})$
and $i^0_-:\r{Hom}(\s{L}_{\mu},\s{F})\hra\r{H}^0(X',\mu^*\s{F})$ are
closed embeddings induced by $\mu$.
\end{lem}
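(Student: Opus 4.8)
The plan is to recognize $\r{ev}_{\s{F}}$ as the bilinear pairing attached to a single linear map and then to invoke the Fourier--Deligne transform of the constant sheaf. Write $V=\r{Ext}^1(\s{F},\Omega_X)$ and $W=\r{H}^0(X',\mu^*\s{F})$, so that $\r{ev}_{\s{F}}\colon V\times W\to\GA$ is $k$-bilinear; for fixed $w\in W$ it is the linear form $v\mapsto\langle i^1_+v,w\rangle$, where $\langle\,,\,\rangle$ is the Serre-duality pairing $\r{Ext}^1(\mu^*\s{F},\Omega_{X'})\times\r{H}^0(X',\mu^*\s{F})\to\r{H}^1(X',\Omega_{X'})\simeq\GA$. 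This packages into a linear map $\beta\colon W\to V^{\vee}$, $w\mapsto\langle i^1_+(-),w\rangle$, and a factorization $\r{ev}_{\s{F}}=\r{can}\circ(\r{id}_V\times\beta)$ with $\r{can}\colon V\times V^{\vee}\to\GA$ the tautological pairing. The gain is that $\r{pr}_2$ now sits in a Cartesian square over $\beta$, so base change for lower-shriek pushforward gives $\r{pr}_{2!}\r{ev}_{\s{F}}^*\AS\simeq\beta^*\r{pr}_!\r{can}^*\AS$, that is, the pullback under $\beta$ of the Fourier transform of the constant sheaf on $V$.

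Next I would compute $\r{pr}_!\r{can}^*\AS$ by the standard Artin--Schreier vanishing: its stalk at $\xi\in V^{\vee}$ is $\RGC(V,\langle-,\xi\rangle^*\AS)$, which vanishes for $\xi\neq0$ and equals $\EL[-2\DIM V]$ (up to a Tate twist) for $\xi=0$. Hence $\r{pr}_!\r{can}^*\AS$ is the skyscraper $\EL[-2\DIM V]$ at the origin of $V^{\vee}$, and its pullback under $\beta$ is $\EL[-2\DIM V]$ supported on $\ker\beta\subset W$. It then remains to identify $\DIM V$ with $\DIM\r{H}^0(X,\s{F})$ and $\ker\beta$ with the image of $i^0_-$.

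The dimension count is immediate from Serre duality on $X$: $V=\r{Ext}^1(\s{F},\Omega_X)\simeq\r{H}^0(X,\s{F})^{\vee}$, giving the shift $[-2\DIM\r{H}^0(X,\s{F})]$. The identification of the kernel is the crux. By definition $\ker\beta=(\IM i^1_+)^{\perp}$ is the annihilator of $\IM i^1_+$ under the Serre pairing on $X'$. Using the decomposition $\mu_*\s{O}_{X'}=\s{O}_X\oplus\s{L}_{\mu}$ together with $\mu^*\s{L}_{\mu}\simeq\s{O}_{X'}$ and $\mu^*\Omega_X\simeq\Omega_{X'}$, I would split both $\r{Ext}^1(\mu^*\s{F},\Omega_{X'})$ and $W$ into their $\sigma$-eigenspaces: the $+1$-eigenspaces are $\r{Ext}^1(\s{F},\Omega_X)$ and $\r{H}^0(X,\s{F})$, while the $-1$-eigenspaces are the twists by $\s{L}_{\mu}$, the latter being precisely $\r{Hom}(\s{L}_{\mu},\s{F})=\IM i^0_-$. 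Since $\sigma$ acts trivially on $\r{H}^1(X',\Omega_{X'})$, the Serre pairing is $\sigma$-invariant, so the two eigenspaces are mutually orthogonal and the pairing is perfect on each. As $\IM i^1_+$ is exactly the $+1$-eigenspace (pullbacks of extensions from $X$ are $\sigma$-invariant), its annihilator is the $-1$-eigenspace, namely $\IM i^0_-$. Combining the three steps yields $\r{pr}_{2!}\r{ev}_{\s{F}}^*\AS\simeq i^0_{-!}\EL[-2\DIM\r{H}^0(X,\s{F})]$.

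The main obstacle is the eigenspace/orthogonality bookkeeping in the last paragraph: one must check carefully that $i^1_+$ has image equal to the $\sigma$-invariants and that the $X'$-Serre pairing restricts to the $X$-Serre pairing on the invariant parts, so that the annihilator computation is an equality rather than merely an inclusion. The Fourier-transform and dimension steps are routine.
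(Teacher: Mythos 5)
Your proposal is correct, and its skeleton is the same as the paper's: both arguments reduce, via the Artin--Schreier/Fourier--Deligne support computation (explicit in your write-up, left implicit in the paper), to the linear-algebra assertion that the annihilator of $i^1_+\r{Ext}^1(\s{F},\Omega_X)$ inside $\r{H}^0(X',\mu^*\s{F})$ under the Serre pairing on $X'$ is exactly $i^0_-\r{Hom}(\s{L}_{\mu},\s{F})$, with the shift coming from Serre duality $\r{Ext}^1(\s{F},\Omega_X)\simeq\r{H}^0(X,\s{F})^{\vee}$ on $X$. Where you genuinely differ is in proving that assertion. The paper shows the two subspaces pair to zero because the pairing factors through $\r{Ext}^1(\s{L}_{\mu},\Omega_X)\simeq\r{H}^0(X,\s{L}_{\mu})^{\vee}=0$ (here $\s{L}_{\mu}$ is a nontrivial degree-zero line bundle since $X'$ is connected), and then upgrades orthogonality to equality by a dimension count in $\r{H}^0(X',\mu^*\s{F})=\r{H}^0(X,\s{F})\oplus\r{Hom}(\s{L}_{\mu},\s{F})$. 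You instead note that this very decomposition is the $\sigma$-eigenspace decomposition, that the pairing is $\sigma$-invariant because $\sigma^*$ acts trivially on $\r{H}^1(X',\Omega_{X'})$ (functoriality of the trace map), and deduce orthogonality and equality formally. Each version buys something: yours never invokes $\r{H}^0(X,\s{L}_{\mu})=0$, so it works verbatim in the split case $X'=X\sqcup X$ (where the paper's justification of orthogonality breaks down even though the conclusion persists); the paper's avoids any discussion of equivariant structures on cohomology. One simplification to your final paragraph: the ``main obstacle'' you flag is not actually needed. You do not have to identify the restricted pairing on the invariant parts with the $X$-Serre pairing; once the eigenspaces are mutually orthogonal, perfection of the $X'$-pairing formally forces the pairing between the $+1$-parts (and between the $-1$-parts) to be perfect, which is all the annihilator computation requires. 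Finally, note that both proofs, like the statement itself through the splitting $\mu_*\s{O}_{X'}=\s{O}_X\oplus\s{L}_{\mu}$, presuppose that the characteristic of $k$ is not $2$.
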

\begin{proof}
We only need to show that the orthogonal complement of
$i^1_+\r{Ext}^1(\s{F},\Omega_X)\subset\r{Ext}^1(\mu^*\s{F},\Omega_{X'})$
inside $\r{H}^0(X',\mu^*\s{F})$ under the canonical pairing is
$i^0_-\r{Hom}(\s{L}_{\mu},\s{F})$. The paring between these two
subspaces is zero due to the fact $\r{H}^0(X,\s{L}_{\mu})=0$, and
they are orthogonal complement of each other because they have the
complimentary dimensions since
$\r{H}^0(X',\mu^*\s{F})=\r{H}^0(X,\s{F})\oplus\r{Hom}(\s{L}_{\mu},\s{F})$.\\
\end{proof}
\end{num}

\begin{num}\label{num:res-res}
{\em Proof of Proposition \ref{pro:res-res}.} By Proposition
\ref{pro:res-lysenko}, it amounts to prove the following assertion.
For $\rho\in\Lambda_{m,-}^{2d}$, the direct image
$(\f{s}'\times\f{d}')_!\ASb^{\rho}$ vanishes unless $\rho=2\lambda$
and then the natural map
 \begin{align*}
 (\f{s}'\times\f{d}')_!\ASb^{2\lambda}\lra
 (\f{s}'\times\f{d}')_!{\nu_{m}^{\lambda}}_!{\nu_m^{\lambda}}{}^*\ASb^{2\lambda}
 \end{align*}
is an isomorphism.\\

For $j=1,...,m$, we introduce the stack $\c{P}_{m,j}^{\rho}$
classifying the data
 \begin{align*}
 (\s{M},(\s{M}_i)_{i=1}^j,(\s{M}'_i)_{i=j+1}^{m-1};
 (D_i)_{i=1}^j,(D'_i)_{i=j+1}^m;(r_i)_{i=1}^j,(r'_i)_{i=j+1}^m)
 \end{align*}
where
$0\subset\mu^*\s{M}_1\subset\cdots\subset\mu^*\s{M}_j\subset\s{M}'_{j+1}
\subset\cdots\subset\s{M}'_{m-1}\subset\mu^*\s{M}$ is a complete
flag of sub-vector bundles of $\mu^*\s{M}$ where $\s{M}$ is a vector
bundle on $X$ of rank $m$; $0\leq\mu^*D_1\leq\cdots\leq\mu^*D_j\leq
D'_{j+1}\leq\cdots\leq D'_m$ is in ${X'}^{\rho}_-$
($\mu^*D:=\mu^{-1}(D)$) and
$r_i:\Omega_X^{m-i}\otimes\s{L}_{\mu}^{i-1}(D_i)\overset{\sim}{\lra}\s{M}_i/\s{M}_{i-1}$
for $i=1,...,j$,
$r'_i:\Omega_{X'}^{m-i}(D'_i)\overset{\sim}{\lra}\s{M}'_i/\s{M}'_{i-1}$
for $i=j+1,...,m$ ($\s{M}_0=0,\s{M}'_m=\mu^*\s{M}$). Hence
$\c{P}_{m,j}^{\rho}$ is empty if $2\nmid\rho_i$ for some $1\leq
i\leq j$. We have the following successive closed embeddings
 \begin{align*}
 \c{P}^{\rho}_{m,m}\hra\c{P}_{m,m-1}^{\rho}\hra\cdots\hra\c{P}^{\rho}_{m,1}=\Qbp^{\rho}_{m,-}\times_{\Mp_m^{2d}}\M_m^d
 \end{align*}
and $\c{P}^{\rho}_{m,m}$ is empty unless $\rho=2\lambda$ in which
case $\c{P}^{2\lambda}_{m,m}=\Qmb^{\lambda}_m$. Write
$\nu_{m,j}^{\rho}$ for the inclusion
$\c{P}_{m,j}^{\rho}\hra\Qbp^{\rho}_{m,-}\times_{\Mp_m^{2d}}\M_m^d$.
We prove successively that the natural map
 \begin{align*}
 (\f{s}'\times\f{d}')_!{\nu^{\rho}_{m,j}}_!{\nu^{\rho}_{m,j}}{}^*\ASb^{\rho}\lra
 (\f{s}'\times\f{d}')_!{\nu^{\rho}_{m,j+1}}_!{\nu^{\rho}_{m,j+1}}{}^*\ASb^{\rho}
 \end{align*}
is an isomorphism for $j=1,...,m-1$.

In fact, let $\overline{\c{R}}_{m,j}^{\rho}$ be the stack
classifying
$(\s{N},(\s{N}'_i)_{i=j+1}^{m-1};(D_i)_{i=1}^j,(D'_i)_{i=j+1}^m;(\widetilde{r}'_i)_{i=j+1}^m)$
where
$0\subset\s{N}'_{j+1}\subset\cdots\subset\s{N}'_{m-1}\subset\mu^*\s{N}$
is a complete flag of sub-vector bundles of $\mu^*\s{N}$ where
$\s{N}$ is a vector bundle on $X$ of rank $m-j$,
$(D_i)_{i=1}^j,(D'_i)_{i=j+1}^m$ are as above and
$\widetilde{r}'_i:\Omega_{X'}^{m-i}\overset{\sim}{\lra}\s{N}'_{i+1}/\s{N}'_i$
($\s{N}'_j=0,\s{N}'_m=\mu^*\s{N}$). The closed substack
$\c{R}_{m,j}^{\rho}\hra\overline{\c{R}}_{m,j}^{\rho}$ is defined by
the condition that
$\widetilde{r}'_{j+1}:\Omega_{X'}^{m-j-1}(D'_{j+1})\overset{\sim}{\lra}\s{N}'_{j+1}$
coincides with the pullback of
$\widetilde{r}_{j+1}:\Omega_X^{m-j-1}\otimes\s{L}_{\mu}^j(D_{j+1})\overset{\sim}{\lra}\s{N}_{j+1}$
for some divisor $D_{j+1}$ on $X$ with $\mu^*D_{j+1}=D'_{j+1}$.
There are natural morphisms
$\overline{\f{f}}:\c{P}^{\rho}_{m,j}\ra\overline{\c{R}}^{\rho}_{m,j}$
and $\f{f}:\c{P}^{\rho}_{m,j+1}\ra\c{R}^{\rho}_{m,j}$ via taking the
quotient by $\s{M}_j$, which are generalized affine fibration
(\cf~\cite{Ly} 0.1.1 for the convention). We have the following
commutative diagram
 \begin{align*}
 \xymatrix{
   \c{P}^{\rho}_{m,j+1} \ar[d]_-{\f{f}} \ar@{^(->}[r] &  \c{P}^{\rho}_{m,j}
   \ar[d]^-{\overline{\f{f}}}\ar[dr]^-{\f{s}'\times\f{d}'} \\
   \c{R}^{\rho}_{m,j} \ar@{^(->}[r] &  \overline{\c{R}}^{\rho}_{m,j}
   \ar[r] & {X'}^{\rho}_-\times_{\r{Pic}_{X'}^{2d}}\r{Pic}_X^d  }
 \end{align*}
in which the square is Cartesian. Applying Lemma \ref{lem:res-key}
to the vector bundle
$\s{N}\otimes\Omega_X^{-m+j+1}\otimes\s{L}_{\mu}^{-j+1}$, one easily
see that $\overline{\f{f}}_!{\nu^{\rho}_{m,j}}{}^*\ASb^{\rho}$ is
supported on $\c{R}^{\rho}_{m,j}$, and our assertion follows. Hence
Proposition \ref{pro:res-res} is proved. Moreover, it has the
following corollary.\\
\end{num}

\begin{corollary}\label{cor:res-grade}
Let $E$ and $L$ be as in Proposition \ref{pro:res-res}, the complex
$\f{d}_{m!}\nu_m^*(\W_{E,m}^{2d}\boxtimes\f{d}^*\sf{A}_L)[-2d]$ on
$X^{(d)}$ is placed in degree zero. It has a canonical filtration by
constructible subsheaves such that the direct sum of all graded
terms
 \begin{align*}
 \r{gr}\:\f{d}_{m!}\nu_m^*(\W_{E,m}^{2d}\boxtimes\f{d}^*\sf{A}_L)[-2d]\simeq\bigoplus_{\lambda\in\Lambda_{m,-}^d}
 \(\r{sym}^{\lambda}_{\;!}{\mu^{\lambda}}^*E^{2\lambda}_-\)\otimes
 L^{(d)}
 \end{align*}
In particular,
$\r{gr}\:\f{d}_{m!}\nu_m^*(\W_{E,m}^{2d}\boxtimes\f{d}^*\sf{A}_L)[-2d]\simeq
\r{gr}\:\f{d}_{n!}\nu_n^*(\W_{E,n}^{2d}\boxtimes\f{d}^*\sf{A}_L)[-2d]$
for $m\geq n$.
\end{corollary}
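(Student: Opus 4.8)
The plan is to read off the graded pieces directly from the stratification of $\Qmb_m^d$ by the locally closed substacks $\Qmb_m^{\lambda}$, $\lambda\in\Lambda_{m,\r{pos}}^d$, combined with the computations already carried out in the proof of Proposition \ref{pro:res-res}. Ordering the strata by the closure relation produces a filtration of $\f{d}_{m!}\nu_m^*(\W_{E,m}^{2d}\boxtimes\f{d}^*\sf{A}_L)$ in $\D(X^{(d)})$ whose defining triangles have as their third terms the contributions of the individual strata. Since $\f{d}_m$ restricted to $\Qmb_m^{\lambda}$ factors as $\r{sym}^{\lambda}\circ\f{s}$ through $X^{\lambda}_-$ (via the large commutative diagram in \emph{Stratifications-I}), the $\lambda$-th graded term is $\r{sym}^{\lambda}_!\f{s}_!$ applied to the restriction of $\nu_m^*(\W_{E,m}^{2d}\boxtimes\f{d}^*\sf{A}_L)$ to $\Qmb_m^{\lambda}$. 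Because $\Qmb_m^{\lambda}$ sits in $\Qbp_{m,-}^{2\lambda}\times_{\Mp_m^{2d}}\M_m^d$ as the closed substack $\c{P}_{m,m}^{2\lambda}$, only $\lambda\in\Lambda_{m,-}^d$ can contribute, matching the index set of the asserted direct sum.

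On each such stratum I would first invoke Proposition \ref{pro:res-lysenko}: the restriction of $\W_{E,m}^{2d}$ to $\Qbp_m^{2\lambda}$ is supported on $\Qbp_{m,-}^{2\lambda}$ and equals $\ASb^{2\lambda}\otimes{\f{s}'_-}^*E^{2\lambda}_-[2q_m+2md-4d(\lambda)]$, using $d(2\lambda)=2d(\lambda)$. Pulling back along $\nu_m^{\lambda}$ replaces ${\f{s}'_-}^*E^{2\lambda}_-$ by $\f{s}^*{\mu^{\lambda}}^*E^{2\lambda}_-$ and the Artin--Schreier factor by $\nu_m^{\lambda*}\ASb^{2\lambda}$. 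The factor $\f{d}^*\sf{A}_L$ restricts, through the same diagram, to $\f{s}^*\r{sym}^{\lambda*}\alpha^*\sf{A}_L\simeq\f{s}^*\r{sym}^{\lambda*}L^{(d)}$ by geometric class field theory. Pushing forward by $\f{s}_!$ and using the projection formula, the local system $E^{2\lambda}_-$ and the $L$-factor are inert, while the fibrewise integral of $\nu_m^{\lambda*}\ASb^{2\lambda}$ is computed exactly as in the $\c{P}_{m,j}^{\rho}$ analysis in the proof of Proposition \ref{pro:res-res}: repeated application of Lemma \ref{lem:res-key} collapses it to a constant sheaf placed in a single cohomological degree. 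A final use of the projection formula for $\r{sym}^{\lambda}_!$ then yields the $\lambda$-th graded term $\(\r{sym}^{\lambda}_{\;!}{\mu^{\lambda}}^*E^{2\lambda}_-\)\otimes L^{(d)}$, and summing over $\lambda\in\Lambda_{m,-}^d$ gives the stated formula.

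The remaining points are the degree bookkeeping and the stability claim. I would collect all shifts---$[2q_m+2md-4d(\lambda)]$ from Lysenko, the $[-2\DIM\r{H}^0]$ produced by Lemma \ref{lem:res-key}, the relative dimensions of $\f{s}$ and $\r{sym}^{\lambda}$, and the overall $[-2d]$---and verify that they cancel so that each graded term sits in degree $0$. Once every graded piece is a genuine constructible sheaf in degree $0$, an induction on the strata shows that each defining triangle has all three terms in degree $0$ (the connecting long exact sequence forces the middle term to be an extension of sheaves), so the filtration is by constructible subsheaves and the whole complex is placed in degree $0$, proving the first assertion. For the final sentence I would invoke the vanishing criterion recorded before Proposition \ref{pro:res-lysenko}: $E^{2\lambda}_-$ vanishes unless $\lambda_1=\cdots=\lambda_{m-n}=0$, so the surviving indices are exactly those $\lambda\in\Lambda_{m,-}^d$ whose first $m-n$ entries vanish; these are canonically identified with $\Lambda_{n,-}^d$, and for them the data $E^{2\lambda}_-$, $\mu^{\lambda}$, $\r{sym}^{\lambda}$ coincide with their $m=n$ counterparts, giving the isomorphism of graded objects for $m\geq n$.

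The main obstacle I anticipate is precisely the degree bookkeeping together with the passage from the a priori derived-category filtration to an honest filtration by constructible subsheaves: one must confirm that the shifts cancel and that Lemma \ref{lem:res-key} produces output in a single degree, as this is what guarantees every term of each triangle lives in degree $0$. A secondary subtlety is tracking the doubling $\lambda\mapsto2\lambda$ and the identification $\c{P}_{m,m}^{2\lambda}=\Qmb_m^{\lambda}$, which ensures that only the even indices $\rho=2\lambda$ survive, exactly as in the proof of Proposition \ref{pro:res-res}.
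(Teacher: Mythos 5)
Your proposal is correct and is essentially the paper's own argument: the paper proves this corollary by exactly the combination you describe, namely the stratification of $\Qmb_m^d$ (organized via the Cousin spectral sequence for the $!$-direct image), Proposition \ref{pro:res-lysenko} on each stratum, Lemma \ref{lem:res-key} for the Artin--Schreier factor, and the fact that $\f{s}:\Qmb_{m,-}^{\lambda}\ra X^{\lambda}_-$ is a generalized affine fibration of rank $q_m+md-d-2d(\lambda)$, with the $m\geq n$ stability read off, as you say, from the vanishing of $E^{2\lambda}_-$ unless $\lambda_1=\cdots=\lambda_{m-n}=0$. The one point to tidy in your degree bookkeeping is that on $\Qmb_{m,-}^{\lambda}$ the pulled-back Artin--Schreier sheaf is already the constant sheaf (this triviality is what Lemma \ref{lem:res-key} supplies, cf. the use of $\ASt^{2d}|_{\Ytc^{2d}_{m,m}}=\EL$ in the proof of Proposition \ref{pro:red-red}), so no separate shift $[-2\DIM\r{H}^0]$ enters: Lysenko's shift $[2q_m+2md-4d(\lambda)]$, minus twice the fibration rank, plus the overall $[-2d]$, cancel exactly to place each graded term in degree zero.
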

\begin{proof}
Since the morphism $\f{s}:\Qmb_{m,-}^{\lambda}\ra X^{\lambda}_-$ is
a generalized affine fibration of rank $q_m+md-d-2d(\lambda)$, the
assertion follows from Proposition \ref{pro:res-lysenko},
\ref{num:res-res}, Lemma \ref{lem:res-key}, and the Cousin spectral
sequence for the
computation of the $!$-direct image via stratifications.\\
\end{proof}

\section{Reduction to the moduli of torsion flags}
\label{sec:red}

\begin{num}
{\em More stacks.} Recall that we have stacks $\Qp_m^0\hra\Qbp^0_m$
and
$\overline{\f{h}}'_{\ra}:{\overline{\c{Z}}'}^{2d}_m=\Qbp^0_m\times_{\Bunp_m}\Modp^{2d}_m\ra\Qbp^{2d}_m$.
Define
${\c{Z}'}^{2d}_m=\Qp^0_m\times_{\Bunp_m}\Modp^{2d}_m\hra{\overline{\c{Z}}'}^{2d}_m$.
Similarly, we define $\Zm^d_m=\Qm^0_m\times_{\Bun_m}\Mod_m^d$.

Define $\c{C}^{2d}_m$ to be the stack classifying the data
$(\s{N},(\s{M}'_i),(r'_i))$ where $\s{N}$ is in $\Coh_m^d$,
$0=\s{M}'_0\subset\s{M}'_1\subset\cdots\subset\s{M}'_m\subset\mu^*\s{N}$
and
$r'_i:\Omega_{X'}^{m-i}\overset{\sim}{\lra}\s{M}'_i/\s{M}'_{i-1}$
for $i=1,...,m$ are isomorphisms. Define $\Cc^{2d}_m$ to be the open
substack of $\c{C}^{2d}_m$ by the condition that $\s{N}$ is locally
free. Then $\Cc^{2d}_m$ is naturally identified with
${\c{Z}'}_m^{2d}\times_{\Bunp_m}\Bun_m$. Now consider the subfunctor
$\c{Y}^{2d}_m$ of $\c{C}^{2d}_m$ defined by the following way. For
any scheme $S$,
$(\s{N}_S,(\s{M}'_{i,S}),(r'_{i,S}))\in\r{Hom}(S,\c{C}^{2d}_m)$ is
in $\r{Hom}(S,\c{Y}^{2d}_m)$ if there exists
$(\s{M}_{i,S},(r_{i,S}))$, where $\s{M}_{i,S}$ is a vector bundle on
$S\times X$ of rank $i$,
$0=\s{M}_{0,S}\subset\s{M}_{1,S}\subset\cdots\subset\s{M}_{m,S}\subset\s{N}_S$
and
$r_{i,S}:\s{O}_S\boxtimes(\Omega_X^{m-i}\otimes\s{L}_{\mu}^{i-1})\overset{\sim}{\lra}\s{M}_{i,S}/\s{M}_{i-1,S}$
for $i=1,...,m$ are isomorphisms, such that
$r'_{i,S}:\s{O}_S\boxtimes\Omega_{X'}^{m-i}\overset{\sim}{\lra}\s{M}'_{i,S}/\s{M}'_{i-1,S}$
coincides with $\mu^*r_{i,S}$ on $S\times
X'-(\r{id}\times\mu)^{-1}T$ for a closed subscheme $T$ of $S\times
X$ which is finite over $S$. We have
 \begin{lem}
 The subfunctor $\c{Y}^{2d}_m\hra\c{C}^{2d}_m$ is a closed
 embedding, hence $\c{Y}^{2d}_m$ is an Artin stack.
 \end{lem}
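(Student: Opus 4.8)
The plan is to check that $\c{Y}^{2d}_m\hra\c{C}^{2d}_m$ becomes a closed immersion after any base change, and then to invoke the fact that $\c{C}^{2d}_m$ is an Artin stack locally of finite type over $k$ (which is clear, since it is built out of a coherent sheaf $\s{N}$ together with a relative full flag of the fixed numerical type on $\mu^*\s{N}$, \ie\ an iterated relative Quot construction). Thus I would fix a $k$-scheme $S$ and an $S$-point $(\s{N}_S,(\s{M}'_{i,S}),(r'_{i,S}))$ of $\c{C}^{2d}_m$, and prove that the subfunctor of $(\r{Sch}/S)$ carving out those $S'$ for which the pullback lies in $\c{Y}^{2d}_m$ is represented by a closed subscheme of $S$. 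As this is local on $S$, I may take $S$ affine and noetherian.

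The first step is to rephrase the defining condition intrinsically, eliminating the existential quantifier over the auxiliary flag $(\s{M}_{i,S})$ and over the relatively finite $T$. Since $\mu$ is finite \'etale of degree $2$ with involution $\sigma$ and $\mu=\mu\circ\sigma$, the bundle $\mu^*\s{N}_S$ carries its canonical descent datum $\theta_S:\sigma^*\mu^*\s{N}_S\overset{\sim}{\lra}\mu^*\s{N}_S$, and a subsheaf descends to $\s{N}_S$ exactly when it is $\theta_S$-stable. I claim that an auxiliary flag $(\s{M}_{i,S})$ with $\mu^*\s{M}_{i,S}$ agreeing with $\s{M}'_{i,S}$ off a closed $T$ finite over $S$ exists if and only if each $\s{M}'_{i,S}$ (which is locally free of rank $i$, being an iterated extension of the line bundles $\Omega_{X'}^{m-j}$) agrees with its conjugate $\theta_S(\sigma^*\s{M}'_{i,S})$ at the generic point of every fibre $X'_s$. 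The prescribed subquotients impose no further constraint, because $\mu^*(\Omega_X^{m-i}\otimes\s{L}_{\mu}^{i-1})\simeq\Omega_{X'}^{m-i}$, so that over the open locus where the flag is genuinely $\theta_S$-stable it descends by \'etale descent, uniquely and with the required trivializations $r_{i,S}$.

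The second step is to show that generic $\theta_S$-stability is a closed condition on $S$. For each $i$ I would consider the morphism of coherent sheaves on $S\times X'$
\begin{align*}
\phi_i:\s{M}'_{i,S}\hra\mu^*\s{N}_S\lra\mu^*\s{N}_S/\theta_S(\sigma^*\s{M}'_{i,S}),
\end{align*}
whose vanishing locus $Z_i\subset S\times X'$ is closed; the condition is precisely that each fibre $X'_s$ be contained in $Z_i$, \ie\ that $\DIM(Z_i)_s\geq1$. Because $X'$ is proper over $k$, the projection $S\times X'\ra S$ is proper, and upper semicontinuity of fibre dimension together with properness show that $\{s:X'_s\subseteq Z_i\}$ is closed in $S$; taking the natural scheme structure (via the Fitting ideals of $\r{coker}\,\phi_i$, equivalently a flattening stratification) and intersecting over $i=1,\dots,m$ yields the closed subscheme representing the $\c{Y}^{2d}_m$-locus.

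I expect the main obstacle to lie in this last passage from a generic, existentially quantified condition to an honest closed subscheme with the correct functor of points, rather than a merely constructible locus. The decisive inputs are the conjugate-flag reformulation, which both removes the quantifier over $(\s{M}_{i,S},T)$ and supplies uniqueness of the descended flag, and the properness of $X'$ over $k$, which is what turns ``agreement at the generic point of each fibre'' into a closed (and not just locally closed) condition. The remaining bookkeeping --- flatness of the relevant quotients, the behaviour of $T$ in families, and the compatibility of the descent data on the subquotients --- is routine but must be carried out with care over the complement of $T$ and via scheme-theoretic closure.
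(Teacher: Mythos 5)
Your first step --- replacing the defining condition of $\c{Y}^{2d}_m$ by generic $\theta_S$-stability of the flag $(\s{M}'_{i,S})$ --- is incorrect, and this is the fatal gap. Membership in $\c{Y}^{2d}_m$ requires not only that the flag descend generically, but that the given trivializations $r'_{i,S}$ agree generically with pullbacks of trivializations by the \emph{specific} line bundles $\Omega_X^{m-i}\otimes\s{L}_{\mu}^{i-1}$; your claim that ``the prescribed subquotients impose no further constraint'' discards exactly this. A conjugation-stable flag descends, but each trivialization of a graded piece descends only up to a twist by $\s{L}_{\mu}$, and $\c{Y}$ prescribes which twist occurs in each degree. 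Two concrete failures: (i) for $i=m$ the inclusion $\s{M}'_{m,S}\hra\mu^*\s{N}_S$ is generically an isomorphism on every fibre, so your stability condition is \emph{vacuous} for the top piece, whereas the nontrivial condition on the determinant section $s^{\sharp}_S$ (generic conjugation-invariance up to the sign $(-1)^{m(m-1)/2}$) is precisely what makes the map $\b{d}_m$ to $X^{(d)}$ well defined; (ii) already for $m=1$, a section $s'_1$ of $\mu^*\s{N}$ lying in the summand $\r{H}^0(X,\s{N}\otimes\s{L}_{\mu})$ of $\r{H}^0(X',\mu^*\s{N})$ has conjugation-stable image but cannot coincide generically with any pulled-back section $\mu^*r_1$, so your reformulated functor is all of $\c{C}^{2d}_1$ while $\c{Y}^{2d}_1$ is a proper substack. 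The paper's proof avoids this by passing from the flag to the induced Pl\"ucker maps $s'_{i,S}$ into $\mu^*\wedge^i\s{N}_S$ (which incorporate the $r'_{j,S}$, including the twists), and characterizing $\c{Y}$ by the equation that $s'_{i,S}-(-1)^{i(i-1)/2}(s'_{i,S}){}^{\sigma}$ have support finite over $S$.

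The closedness mechanism in your second step also does not work. A map $\phi_i$ into the coherent, generally non-locally-free sheaf $\mu^*\s{N}_S/\theta_S(\sigma^*\s{M}'_{i,S})$ has no closed ``vanishing locus'': stalkwise that locus is the complement of $\r{supp}(\IM\phi_i)$, hence open; and neither images nor their supports commute with base change, so conditions phrased through $\r{supp}(\IM\phi_i)$ on $S\times X'$ do not compute the fibrewise (let alone $T$-point) conditions. For instance, in a one-parameter family in which the desired coincidence holds only over $t=0$, the stalks of $\IM\phi_i$ at points of the special fibre see the deformation direction, so $\r{supp}(\IM\phi_i)$ is all of $S\times X'$ and your semicontinuity-plus-properness argument detects nothing; Fitting ideals of $\r{coker}\,\phi_i$ cut out non-surjectivity loci, not vanishing loci, so they do not supply the needed scheme structure either. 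What genuinely has to be proved is that ``the support of the pullback of a given section is finite over the base'' is representable by a closed subscheme, functorially in $S$; this is Lysenko's Sublemma 4, which the paper invokes, and its hypothesis is secured by an observation absent from your proposal: $S$-flatness of $\s{N}_S$ implies $\mu^*\wedge^i\s{N}_S$ is locally free away from a closed subscheme finite over $S$, so that there the vanishing of a section is an honest closed condition compatible with base change, and properness converts quasi-finiteness of the remaining support into finiteness.
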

 \begin{proof}
 An object
 $(\s{N}_S,(\s{M}'_{i,S}),(r'_{i,S}))\in\r{Hom}(S,\c{C}^{2d}_m)$
 will induce maps
  \begin{align*}
  s'_{i,S}:\s{O}_S\boxtimes\Omega_{X'}^{(m-1)+\cdots+(m-i)}\lra\wedge^i\mu^*\s{N}=\mu^*\wedge^i\s{N}
  \end{align*}
 for $i=1,...,m$. Let
 \begin{align*}
 {s'_{i,S}}{}^{\sigma}:\s{O}_S\boxtimes\Omega_{X'}^{(m-1)+\cdots+(m-i)}
 \simeq\s{O}_S\boxtimes\sigma^*\Omega_{X'}^{(m-1)+\cdots+(m-i)}
 \overset{\sigma^*s'_{i,S}}{\lra}\sigma^*\mu^*\wedge^i\s{N}\simeq\mu^*\wedge^i\s{N}
 \end{align*}
 Then
 $(\s{N}_S,(\s{M}'_{i,S}),(r'_{i,S}))\in\r{Hom}(S,\c{Y}^{2d}_m)$ if
 and only if the support of $s'_{i,S}-(-1)^{\frac{i(i-1)}{2}}{s'_{i,S}}{}^{\sigma}$ is a closed
 subscheme of $S\times X'$ finite over $S$ for $i=1,...,m$. Since
 $\s{N}$ is flat over $S$, it is locally free outside a closed
 subscheme of $S\times X$ finite over $S$. Hence the assertion
 follows from \cite{Ly} Sublemma 4.\\
 \end{proof}

For $(\s{N}_S,(\s{M}'_{i,S}),(r'_{i,S}))\in\r{Hom}(S,\c{Y}^{2d}_m)$,
the induced map
 \begin{align*}
 s^{\sharp}_S:\s{O}_S\boxtimes\Omega_{X'}^{\frac{m(m-1)}{2}}\overset{\sim}{\lra}\det\s{M}'_m\hra\det\mu^*\s{N}
 \end{align*}
satisfies that
$s^{\sharp}_S=(-1)^{\frac{m(m-1)}{2}}(s^{\sharp}_S)^{\sigma}$. Hence
the zero divisor of $s^{\sharp}-S$ locates in the closed subscheme
$S\times X^{(d)}$ of $S\times X'^{(2d)}$. It induces a morphism
$\b{d}_m:\c{Y}^{2d}_m\ra X^{(d)}$. Define
$\Yc^{2d}_m=\Cc^{2d}_m\cap\Y^{2d}_m\hra\Y^{2d}_m$ which is an open
substack naturally identified with
${\c{Z}'}^{2d}_m\times_{\Qbp^{2d}_m}\Qmb^d_m$. Moreover, the
restriction of $\b{d}_m$ to
$\Yc^{2d}_m={\c{Z}'}^{2d}_m\times_{\Qbp^{2d}_m}\Qmb^d_m$ coincides
with $\r{id}\times\f{d}_m$.

For $j=0,...,m$, we define a subfunctor $\c{Y}_{m,j}^{2d}$ of
$\c{Y}^{2d}_m$ by the condition that the partial data
$((\s{M}'_i)_{i=1}^j,(r'_i)_{i=1}^j)$ is a pullback of
$((\s{M}_i)_{i=1}^j,(r_i)_{i=1}^j)$ (on the whole $S\times X$) in
the above sense. Then we have a successive closed embeddings
 \begin{align*}
 \c{Y}_{m,m}^{2d}\hra\c{Y}_{m,m-1}^{2d}\hra\cdots\hra\c{Y}_{m,0}^{2d}=\c{Y}_m^{2d}
 \end{align*}
and natural morphisms $\c{Y}_{m,j}^{2d}\ra\c{Y}_{m-j}^{2d}$ via
taking the quotient by $\s{M}'_j$, which are generalized affine
fibration. Let
$\Yc^{2d}_{m,j}=\Cc^{2d}_m\cap\Y^{2d}_{m,j}\hra\Y^{2d}_{m,j}$ be the
open substack, then $\Yc^{2d}_{m,m}$ is naturally identified with
$\Zm^d_m$.\\
\end{num}

\begin{num}
{\em Iterated modifications.} We have a natural morphism
$\c{Y}_m^{2d}\hra\c{C}_m^{2d}\ra\Cohp^{2d}_m$ sending
$(\s{N},(\s{M}'_i),(r'_i))$ to $\mu^*\s{N}/\s{M}'_m$, and define
$\Yt_m^{2d}=\c{Y}_m^{2d}\times_{\Cohp_0^{2d}}\Flp^{2d}_0$,
$\Ytc^{2d}_m=\Yc_m^{2d}\times_{\Cohp_0^{2d}}\Flp^{2d}_0$. Denote
$\b{c}_m:\c{Y}_m^{2d}\ra\Cohp^{2d}_m\times_{{X'}^{(2d)}}X^{(d)}$ and
$\widetilde{\b{c}}_m:\Yt_m^{2d}\ra\Flp^{2d}_0\times_{{X'}^{(2d)}}X^{(d)}$
the induced morphisms from $\b{d}_m$. We have the natural projection
$\b{p}_m:\Yt_m^{2d}\ra\c{Y}_m^{2d}$. Moreover, for $j=0,...,m$, we
define
$\Yt^{2d}_{m,j}=\c{Y}^{2d}_{m,j}\times_{\c{Y}^{2d}_m}\Yt^{2d}_m$ and
$\b{p}_{m,j}:\Yt^{2d}_{m,j}\lra\c{Y}^{2d}_{m,j}$ the natural
projection. We have the following commutative diagram
 \begin{align*}
 \xymatrix{
   \Yt^{2d}_{m,j} \ar[d]_-{\b{p}_{m,j}} \ar@{^(->}[r]_-{\widetilde{\b{u}}_{m,j}}
   \ar@/^3pc/[rrrr]^-{\widetilde{\b{d}}_{m,j}}
   & \Yt^{2d}_m \ar[d]_-{\b{p}_m} \ar[r]_-{\widetilde{\b{c}}_m}
   \ar@/^1.5pc/[rrr]^-{\widetilde{\b{d}}_m}
   & \Flp^{2d}_0\times_{{X'}^{(2d)}}X^{(d)} \ar[d]_-{\f{p}'\times\r{id}}
   \ar[rr]_-{(\r{div}^{\times 2d}\circ\f{q}')\times\r{id}}
   && {X'}^{2d}\times_{{X'}^{(2d)}}X^{(d)} \ar[d]^-{\r{pr}} \\
   \c{Y}^{2d}_{m,j} \ar@{^(->}[r]^-{\b{u}_{m,j}}
   \ar@/_3pc/[rrrr]_-{\b{d}_{m,j}}
   & \c{Y}_m^{2d} \ar[r]^-{\b{c}_m}
   \ar@/_1.5pc/[rrr]_-{\b{d}_m}
   & \Cohp^{2d}_0\times_{{X'}^{(2d)}}X^{(d)} \ar[rr]^-{\r{div}^{2d}\times\r{id}}
   && X^{(d)}   }
 \end{align*}
Similarly, we define
$\Ytc^{2d}_{m,j}=\c{Y}^{2d}_{m,j}\times_{\c{Y}^{2d}_m}\Ytc^{2d}_m$
which is a closed substack of $\Ytc^{2d}_m$ and an open substack of
$\Yt^{2d}_{m,j}$. We have restriction of morphisms
$\widetilde{\b{u}}^{\circ}_{m,j}$, $\b{u}^{\circ}_{m,j}$,
$\b{p}^{\circ}_{m,j}$, $\b{p}^{\circ}_m$,
$\widetilde{\b{d}}^{\circ}_{m,j}$, $\b{d}^{\circ}_{m,j}$,
$\widetilde{\b{d}}^{\circ}_m$ and $\b{d}^{\circ}_m$. But now the
restriction of $\widetilde{\b{c}}_m$ (resp. $\b{c}_m$) to
$\Ytc^{2d}_m$ (resp. $\Yc^{2d}_m$) will factor through
$\Flp^{2d}_{0,\leq m}\times_{\Cohp^{2d}_0}\Coh^d_0$ (resp.
$\Coh^d_{0,\leq m}$), and we define $\widetilde{\b{c}}^{\circ}_m$
(resp. $\b{c}^{\circ}_m$) to be the restriction but with this new
target. Let
$\widetilde{\b{c}}^{\circ}_{m,j}=\widetilde{\b{c}}^{\circ}_m\circ\widetilde{\b{u}}^{\circ}_{m,j}$.
We will have a similar diagram as above which we omit. In
particular, the stack $\Ytc^{2d}_{m,m}$ is naturally identified with
$\Zm^d_m\times_{\Coh^d_0}\(\Coh^d_0\times_{\Cohp^{2d}_0}\Flp^{2d}_0\)$.

As for $\Qp^0_m$, we define in the same way the evaluation map
$\r{ev}^{2d}:\Y^{2d}_m\ra\GA$, and $\AS^{2d}={\r{ev}^{2d}}^*\AS$,
$\ASt^{2d}=\b{p}_m^*\AS^{2d}$ which are rank-one local systems
\cf~\ref{num:steps-laumon}. The $*$-restrictions of $\ASt^{2d}$ to
$\Yt^{2d}_{m,j}$ and $\Ytc^{2d}_{m,j}$ are still denoted by
$\ASt^{2d}$. Then we have

\begin{proposition}\label{pro:red-red}
The morphisms $\widetilde{\b{d}}^{\circ}_m$ is of relative dimension
$\leq q_m+(m-1)d$, and the natural map of the highest cohomological
sheaves
 \begin{align*}
 \R^{2(q_m+(m-1)d)}\widetilde{\b{d}}^{\circ}_{m!}\ASt^{2d}\lra
 \R^{2(q_m+(m-1)d)}\widetilde{\b{d}}^{\circ}_{m,m!}\EL
 \end{align*}
is an isomorphism.
\end{proposition}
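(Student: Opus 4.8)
The plan is to imitate the successive integration argument of \ref{num:res-res}, now running along the chain of closed embeddings
\begin{align*}
\Ytc^{2d}_{m,m}\hra\Ytc^{2d}_{m,m-1}\hra\cdots\hra\Ytc^{2d}_{m,0}=\Ytc^{2d}_m,
\end{align*}
with Lemma \ref{lem:res-key} serving as the engine at each stage. Writing $\widetilde{\b{d}}^{\circ}_{m,j}=\widetilde{\b{d}}^{\circ}_m\circ\widetilde{\b{u}}^{\circ}_{m,j}$, the localization triangles attached to these embeddings supply canonical maps $\widetilde{\b{d}}^{\circ}_{m,j!}\ASt^{2d}\to\widetilde{\b{d}}^{\circ}_{m,j+1!}\ASt^{2d}$. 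On the innermost stratum $\Ytc^{2d}_{m,m}\simeq\Zm^d_m\times_{\Coh^d_0}\(\Coh^d_0\times_{\Cohp^{2d}_0}\Flp^{2d}_0\)$ every subquotient $\s{M}'_i$ descends to $X$, so the extension classes summed by $\r{ev}^{2d}$ are pulled back from $X$ and are annihilated by the pairing of Lemma \ref{lem:res-key}; hence $\ASt^{2d}|_{\Ytc^{2d}_{m,m}}\simeq\EL$ and the asserted map is the composite of the above maps, its target being $\widetilde{\b{d}}^{\circ}_{m,m!}\EL$.

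First I would pin down the relative dimension, which is what makes ``highest cohomological sheaf'' meaningful. The affine fibrations $\c{Y}^{2d}_{m,j}\to\c{Y}^{2d}_{m-j}$ (quotient by $\s{M}'_j$) lift to the flag level $\Yt^{2d}_{m,j}\to\Yt^{2d}_{m-j}$, since the flag lives on $\mu^*\s{N}/\s{M}'_m$ and is unaffected by quotienting by $\s{M}'_j\subset\s{M}'_m$; they remain generalized affine fibrations. Combining their ranks with the identification of $\Ytc^{2d}_{m,m}$ above and the known dimensions of $\Qm^0_m$, of the flag stack $\Flp^{2d}_0$ and of the modification stacks, one computes that every fiber of $\widetilde{\b{d}}^{\circ}_m$ has dimension $\leq q_m+(m-1)d$, with equality achieved exactly along the diagonal locus $\Ytc^{2d}_{m,m}$. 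This identifies $2(q_m+(m-1)d)$ as the top possible cohomological degree of $\widetilde{\b{d}}^{\circ}_{m!}\ASt^{2d}$.

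Next I would show, for each $j=0,\dots,m-1$, that the map
\begin{align*}
\R^{2(q_m+(m-1)d)}\widetilde{\b{d}}^{\circ}_{m,j!}\ASt^{2d}\lra\R^{2(q_m+(m-1)d)}\widetilde{\b{d}}^{\circ}_{m,j+1!}\ASt^{2d}
\end{align*}
is an isomorphism; composing these and using $\ASt^{2d}|_{\Ytc^{2d}_{m,m}}\simeq\EL$ then proves the proposition. Let $U_j$ be the open complement of $\Ytc^{2d}_{m,j+1}$ in $\Ytc^{2d}_{m,j}$, on which $\s{M}'_{j+1}$ does not descend to $X$. Pushing the localization triangle for this closed--open decomposition forward by $\widetilde{\b{d}}^{\circ}_{m,j}$ reduces the claim to the statement that the contribution of $U_j$ lives in degree strictly below $2(q_m+(m-1)d)$. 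This is precisely the output of Lemma \ref{lem:res-key}: along the fibration $\Yt^{2d}_{m,j}\to\Yt^{2d}_{m-j}$ the map $\r{ev}^{2d}$ restricts, on the relevant affine fibers, to the pairing $\r{ev}_{\s{F}}$ for the appropriate twist $\s{F}$ of the universal coherent sheaf $\s{N}$ (as in the application of the lemma in \ref{num:res-res}), so that the fiberwise $!$-pushforward of $\ASt^{2d}$ is, in top degree, supported precisely on the pullback locus $\r{Hom}(\s{L}_{\mu},\s{F})$---that is, on $\Ytc^{2d}_{m,j+1}$---with the shift $[-2\DIM\r{H}^0]$ predicted by the lemma, while over $U_j$ it sits in strictly lower degree.

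The main obstacle is the bookkeeping that renders these two features mutually consistent: one must check that the shifts $[-2\DIM\r{H}^0]$ produced by Lemma \ref{lem:res-key} at the successive stages $j=0,\dots,m-1$ accumulate to exactly the global top degree $2(q_m+(m-1)d)$, and simultaneously that the relative-dimension estimate forces each open-stratum contribution strictly below top degree. Both hinge on computing the precise ranks of the affine fibrations $\c{Y}^{2d}_{m,j}\to\c{Y}^{2d}_{m-j}$ and matching them against $\DIM\Qm^0_m=q_m$; this is the delicate part, the remainder being formal manipulation of localization triangles and base change.
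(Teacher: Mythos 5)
Your outline does track the paper's strategy (induction along $\Ytc^{2d}_{m,m}\hra\cdots\hra\Ytc^{2d}_{m,0}=\Ytc^{2d}_m$, Lemma \ref{lem:res-key} applied fiberwise along the quotient-by-$\s{M}_j$ fibrations, and the triviality $\ASt^{2d}|_{\Ytc^{2d}_{m,m}}=\EL$ to finish), but there is a genuine gap, and it is precisely at the point you dismiss as ``formal manipulation of localization triangles and base change.'' The quotient fibration does \emph{not} stay inside the locally free world: quotienting $\s{N}$ by $\s{M}_j$ can create torsion, so the map $\kappa$ lands in the \emph{full} stack $\Yt^{2d}_{m-j}$, not in $\Ytc^{2d}_{m-j}$. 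Concretely, your open stratum $U_j$ is the preimage under $\kappa^{\circ}$ of the open $V=\Yt^{2d}_{m-j}-\Yt^{2d}_{m-j,1}$, and what Lemma \ref{lem:res-key} gives you is that $(\kappa^{\circ}_{\;!}\ASt^{2d})|_V$ sits in degrees $\leq 2(q_m-q_{m-j}+jd)-1$. To conclude that the contribution of $U_j$ to the pushforward over ${X'}^{2d}\times_{{X'}^{(2d)}}X^{(d)}$ lies strictly below degree $2(q_m+(m-1)d)$, you must still bound the cohomological amplitude of $\widetilde{\b{d}}_{m-j!}$ on $V$, i.e.\ you need the relative dimension estimate $\leq q_{m-j}+(m-j-1)d$ for $\widetilde{\b{d}}_{m-j}$ on the full stack $\Yt^{2d}_{m-j}$, torsion locus included. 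This is Lemma \ref{lem:red-dim2} in the paper, and it is not formal: its proof stratifies $\Yt^{2d}_{m-j}$ by the degree $d'$ of the maximal torsion subsheaf of $\s{N}$ and crucially invokes Lemma \ref{lem:direct-dimension}, namely that $\Flp^{2d'}_0\times_{\Cohp^{2d'}_0}\Coh^{d'}_0\ra{X'}^{2d'}\times_{{X'}^{(2d')}}X^{(d')}$ has relative dimension $\leq -d'$. Nothing in your proposal supplies this input, and without it the inductive step does not close.

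The same omission undermines your treatment of the first assertion. The paper does not obtain the bound $\leq q_m+(m-1)d$ for $\widetilde{\b{d}}^{\circ}_m$ from the ranks of the fibrations $\c{Y}^{2d}_{m,j}\ra\c{Y}^{2d}_{m-j}$; it proves it (Lemma \ref{lem:red-dim1}) by pulling back Lysenko's stratification $\Ztp^e_m$ and computing, stratum by stratum, $d(2\lambda)+\bigl(q_m+(m-1)d-2d(\lambda)\bigr)\leq q_m+(m-1)d$. If you instead try to run your fibration bookkeeping inductively in $m$, you are again forced through the full stacks $\Yt^{2d}_{m-j}$ and hence again need Lemma \ref{lem:red-dim2} and Lemma \ref{lem:direct-dimension}. (Your parenthetical claim that the top dimension is attained ``exactly along'' $\Ytc^{2d}_{m,m}$ is also unsubstantiated, though it is not needed for the statement.) In short: the skeleton of your argument is the paper's, but the load-bearing step --- controlling the torsion locus created by the quotient maps --- is missing.
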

\begin{proof}
The estimation of the relative dimension will be accomplished in
Lemma \ref{lem:red-dim1}. For the second assertion, we inductively
prove that the natural map
 \begin{align*}
 \R^{2(q_m+(m-1)d)}\widetilde{\b{d}}^{\circ}_{m,j!}\ASt^{2d}\lra
 \R^{2(q_m+(m-1)d)}\widetilde{\b{d}}^{\circ}_{m,j+1!}\ASt^{2d}
 \end{align*}
is an isomorphism for $j=0,...,m-1$. Consider the commutative
diagram
 \begin{align*}
 \xymatrix{
   \Ytc^{2d}_{m,j+1} \ar[d] \ar[dr]^{\kappa^+} \ar@{^(->}[r]
   & \Ytc^{2d}_{m,j} \ar[d]^-{\kappa^{\circ}}
   \ar@{^(->}[r] & \Yt^{2d}_{m,j} \ar[dl]^{\kappa} \\
   \Yt^{2d}_{m-j,1} \ar@{^(->}[r] & \Yt^{2d}_{m-j}   }
 \end{align*}
where the square is Cartesian. The morphism $\kappa$ is a
generalized affine fibration of rank $q_m-q_{m-j}+jd$ and hence the
natural map
 \begin{align*}
 \R^{2(q_m-q_{m-j}+jd)}\kappa^{\circ}_{\;!}\ASt^{2d}\lra\R^{2(q_m-q_{m-j}+jd)}\kappa_!\ASt^{2d}
 \end{align*}
is an isomorphism over the image of $\kappa^{\circ}$. Applying Lemma
\ref{lem:res-key} to the coherent sheaf
$\s{N}/\s{M}_j\otimes\Omega_X^{-m+j+1}\otimes\s{L}_{\mu}^{-j+1}$, we
see that $\R^{2(q_m-q_{m-j}+jd)}\kappa_!\ASt^{2d}$ is supported on
the closed substack $\Yt^{2d}_{m-j,1}$. Hence, the natural map
 \begin{align*}
 \R^{2(q_m-q_{m-j}+jd)}\kappa^{\circ}_{\;!}\ASt^{2d}\lra\R^{2(q_m-q_{m-j}+jd)}\kappa^+_{\;!}\ASt^{2d}
 \end{align*}
is an isomorphism. To conclude the desired isomorphism at the
beginning of the proof, we only need to apply Lemma
\ref{lem:red-dim2} for $\widetilde{\b{d}}_{m-j}$. Now the second
assertion immediately follows since
$\ASt^{2d}|_{\Ytc^{2d}_{m,m}}=\EL$ by Lemma
\ref{lem:res-key}.\\
\end{proof}

\begin{corollary}\label{cor:red-coh}
Let $E$ and $L$ be as in Proposition \ref{pro:res-res}, then we have
a canonical isomorphism
 \begin{align*}
 \f{d}_{m!}\nu_m^*\(\W_{E,m}^{2d}\boxtimes\f{d}^*\sf{A}_L\)[-2d]\overset{\sim}{\lra}
 \(\R^{-2d}\r{div}^d_{\leq m!}\mu_0^*\Lau_E^{2d}\)\otimes L^{(d)}
 \end{align*}
of constructible sheaves on $X^{(d)}$.
\end{corollary}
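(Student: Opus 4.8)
The plan is to deduce this from Proposition \ref{pro:red-red}, by repackaging the left-hand side through the auxiliary stacks $\c{Y}^{2d}_m$, $\Yt^{2d}_m$ and their open loci, and by extracting the dependence on $E$ and $L$ via the projection formula.

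First I would rewrite the left-hand side as a pure Artin--Schreier integral over the flag resolution. Unwinding the definition \eqref{eqn:steps-whittaker} of $\W_{E,m}^{2d}$ and using base change along $\overline{\mu}_m$, the identification $\Yc^{2d}_m\cong{\c{Z}'}^{2d}_m\times_{\Qbp^{2d}_m}\Qmb^d_m$ together with the relation $\b{d}^{\circ}_m=\r{id}\times\f{d}_m$ shows that $\f{d}_{m!}\nu_m^*\W_{E,m}^{2d}$ is computed by $\b{d}^{\circ}_{m!}$ of the Whittaker integrand on $\Yc^{2d}_m$. Lifting along $\b{p}^{\circ}_m:\Ytc^{2d}_m\ra\Yc^{2d}_m$, the flag resolution presenting $\Lau^{2d}_E$ through its Springer form, the integrand becomes, $\f{S}_{2d}$--equivariantly, the product of $\ASt^{2d}$ with the pullback of $E^{\boxtimes\:2d}$ along $\r{div}^{\times 2d}\circ\f{q}'$, with $\Lau^{2d}_E$ to be recovered by $\f{S}_{2d}$--invariants at the end. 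Since this $E$--factor is pulled back from ${X'}^{2d}\times_{{X'}^{(2d)}}X^{(d)}$, the projection formula extracts it, leaving the pure integral $\widetilde{\b{d}}^{\circ}_{m!}\ASt^{2d}$; likewise $\f{d}^*\sf{A}_L$ equals $\f{d}_m^*L^{(d)}$ by the Abel--Jacobi relation $\alpha^*\sf{A}_L\simeq L^{(d)}$, and factors out of $\f{d}_{m!}$ as the twist by $L^{(d)}$.

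Next I would invoke Proposition \ref{pro:red-red}. By Corollary \ref{cor:res-grade} the complex in question is, after the shift $[-2d]$, concentrated in cohomological degree zero, so only its top cohomology sheaf matters; Proposition \ref{pro:red-red} identifies the top cohomology of $\widetilde{\b{d}}^{\circ}_{m!}\ASt^{2d}$ with that of $\widetilde{\b{d}}^{\circ}_{m,m!}\EL$, collapsing the integral onto the closed locus $\Ytc^{2d}_{m,m}$ where the flag descends from $X$. There the torsion cokernel is $\mu^*\s{N}/\s{M}'_m=\mu_0(\s{N}/\s{M}_m)$, so $\widetilde{\b{d}}^{\circ}_{m,m}$ factors through $\Coh^d_{0,\leq m}$ via $\s{N}/\s{M}_m$ and then $\r{div}^d_{\leq m}$ to $X^{(d)}$. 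Under $\Ytc^{2d}_{m,m}\cong\Zm^d_m\times_{\Coh^d_0}\(\Coh^d_0\times_{\Cohp^{2d}_0}\Flp^{2d}_0\)$, the flag factor is the base change of $\f{p}':\Flp^{2d}_0\ra\Cohp^{2d}_0$ along $\mu_0$; reinstating the $E^{\boxtimes\:2d}$--twist and pushing along it reconstitutes $\mu_0^*\Spr^{2d}_E$ by base change, whose $\f{S}_{2d}$--invariants (exact since $|\f{S}_{2d}|$ is invertible in $\EL$) are $\mu_0^*\Lau^{2d}_E$. The complementary factor $\Zm^d_m\ra\Coh^d_0$ is, by a Serre-duality computation in the spirit of \ref{num:steps-1}, a generalized affine fibration, so pushing $\EL$ along it contributes only a clean cohomological shift. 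Assembling these over $X^{(d)}$ yields $\(\R^{-2d}\r{div}^d_{\leq m!}\mu_0^*\Lau^{2d}_E\)\otimes L^{(d)}$.

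The hard part will be the degree bookkeeping that makes all of this precise. I expect the main obstacle to be reconciling the shift $2(q_m+(m-1)d)$ of Proposition \ref{pro:red-red} with the degree-zero placement of the left-hand side after $[-2d]$ from Corollary \ref{cor:res-grade}, and confirming that the fibration $\Zm^d_m\ra\Coh^d_0$ contributes exactly this shift with no higher direct images. A second delicate point is that passing to top cohomology must commute both with the projection-formula extraction of $E^{\boxtimes\:2d}$ and with the $\f{S}_{2d}$--invariants relating $\Spr^{2d}_E$ to $\Lau^{2d}_E$; for the latter one should check that $\Ytc^{2d}_m$ and $\Ytc^{2d}_{m,m}$ carry compatible $\f{S}_{2d}$--actions under which Proposition \ref{pro:red-red} is equivariant, so that the invariants may be taken after applying it.
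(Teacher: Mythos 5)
Your proposal is correct and follows essentially the same route as the paper's proof: base change along $\nu_m$ and the flag resolution $\b{p}^{\circ}_m$ rewrite the left-hand side $\f{S}_{2d}$-equivariantly as the top cohomology sheaf of $\widetilde{\b{d}}^{\circ}_{m!}$ applied to $\ASt^{2d}$ twisted by the pullback of $E^{\boxtimes\:2d}\boxtimes L^{(d)}$ (the $L$-factor entering exactly through the Abel--Jacobi relation you cite), then Proposition \ref{pro:red-red} together with Corollary \ref{cor:res-grade} and the projection formula collapses everything onto $\Ytc^{2d}_{m,m}$, and finally pushing to $X^{(d)}$, identifying the flag factor as the base change of $\f{p}'$ along $\mu_0$, and taking $\f{S}_{2d}$-invariants produces $(\R^{-2d}\r{div}^d_{\leq m!}\mu_0^*\Lau_E^{2d})\otimes L^{(d)}$. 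The only cosmetic difference is that the paper packages your ``generalized affine fibration $\Zm^d_m\ra\Coh^d_0$'' step as the single statement that $\widetilde{\b{c}}^{\circ}_{m,m}:\Ytc^{2d}_{m,m}\ra\Flp^{2d}_{0,\leq m}\times_{\Cohp^{2d}_0}\Coh^d_0$ is smooth and surjective with connected fibres of dimension $q_m+md$, which combined with Lemma \ref{lem:direct-dimension} settles the degree bookkeeping you flagged.
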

\begin{proof}
By the above proposition, we have canonical isomorphisms
 \begin{align*}
 &\R^{2(q_m+md-d)}\f{d}_{m!}\(\left.{\overline{\f{h}}_{\leftarrow}'}{}^*\ASb^0\right|_{\Ytc^{2d}_m}\otimes
 \left.\b{c}_m^*\(\Spr^{2d}_E\boxtimes
 L^{(d)}\)\right|_{\Ytc^{2d}_m}\)\\
 \overset{\sim}{\lra}&\R^{2(q_m+md-d)}
 \r{pr}_!\f{d}_{m!}\b{p}^{\circ}_{m!}\(\ASt^{2d}\otimes
 \left.\widetilde{\b{c}}_m^*\(\(\r{div}^{\times2d*}E^{\boxtimes\:2d}\)\boxtimes
 L^{(d)}\)\right|_{\Ytc^{2d}_m}\)\\
 \overset{\sim}{\lra}&\R^{2(q_m+md-d)}\r{pr}_!\widetilde{\b{d}}^{\circ}_{m!}\(\ASt^{2d}\otimes\left.\widetilde{\b{c}}_m^*
 \(\(\r{div}^{\times2d*}E^{\boxtimes\:2d}\)\boxtimes
 L^{(d)}\)\right|_{\Ytc^{2d}_m}\)\\
 \overset{\sim}{\lra}&\R^{2(q_m+md-d)}\r{pr}_!\widetilde{\b{d}}^{\circ}_{m,m!}\left.\widetilde{\b{c}}_{m,m}^{\quad*}
 \(\(\r{div}^{\times2d*}E^{\boxtimes\:2d}\)\boxtimes
 L^{(d)}\)\right|_{\Ytc^{2d}_{m,m}}.
 \end{align*}
But the morphism
$\widetilde{\b{d}}^{\circ}_{m,m}:\Ytc^{2d}_{m,m}\ra{X'}^{2d}\times_{{X'}^{(2d)}}X^{(d)}$
is the composition of
$\widetilde{\b{c}}^{\circ}_{m,m}:\Ytc^{2d}_{m,m}\ra\Flp^{2d}_{0,\leq
m}\times_{\Cohp^{2d}_0}\Coh^d_0$ and the natural morphism
$\widetilde{\r{div}}{}^{2d}_{\leq m}:\Flp^{2d}_{0,\leq
m}\times_{\Cohp^{2d}_0}\Coh^d_0\ra{X'}^{2d}\times_{{X'}^{(2d)}}X^{(d)}$,
in which the first one is smooth and surjective with connected fibre
of dimension $q_m+md$ and the second one is of relative dimension
$\leq-d$ by Lemma \ref{lem:direct-dimension}. Hence
 \begin{align*}
 &\R^{2(q_m+md-d)}\r{pr}_!\widetilde{\b{d}}^{\circ}_{m,m!}\left.\widetilde{\b{c}}_{m,m}^{\quad*}
 \(\(\r{div}^{\times2d*}E^{\boxtimes\:2d}\)\boxtimes
 L^{(d)}\)\right|_{\Ytc^{2d}_{m,m}}\\
 \overset{\sim}{\lra}&\(\R^{-2d}\r{div}^d_{\leq m!}\mu_0^*\Spr_E^{2d}\)\otimes L^{(d)}
 \end{align*}
where all these sheaves carry natural actions of $\f{S}_{2d}$ and
the isomorphisms are equivariant under these actions. Taking
$\f{S}_{2d}$ invariants and by Corollary \ref{cor:red-coh}, we get the desired isomorphism.\\
\end{proof}
\end{num}

\begin{num}\label{num:red-dimension}
{\em Stratifications-II.} We recall a stratification on
$\Ztp^d_m:=\Zp^d_m\times_{\Cohp^d_0}\Flp^d_0$ introduced in
\cite{Ly} Section 4.3. For $\lambda\in\Lambda^d_{m,\r{eff}}$, the
stack $\Ztp^d_m\times_{\Qbp^d_m}\Qbp^{\lambda}_m$ is stratified by
locally closed substack $\Ztp^e_m$ for $e\in\:^d_mJ_d$ (in the
notation of \cite{Ly}), where $e=\(e^j_i\)$ is an $d\times m$ matrix
with entries being non-negative integers such that $\sum_ie^j_i=1$
for $j=1,...,d$ and $\sum_je^j_i=\lambda_i$ for $i=1,...,m$. Put
${X'}^e=\prod_{i,j}{X'}^{e^j_i}$, then there is a natural morphism
$\Ztp^e_m\ra{X'}^e\times_{{X'}^{\lambda}}\Qbp^{\lambda}_m$ is an
affine fibration of rank $d(\lambda)$. Now it is easy to deduce the
following.

\begin{lem}\label{lem:red-dim1}
The morphisms $\widetilde{\b{d}}^{\circ}_m$ is of relative dimension
$\leq q_m+(m-1)d$.
\end{lem}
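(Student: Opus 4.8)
The plan is to bound the fibre dimension of $\widetilde{\b{d}}^{\circ}_m$ one stratum at a time, feeding the affine-fibration description of the $\Ztp^e_m$ just recalled into a dimension count and using that the target ${X'}^{2d}\times_{{X'}^{(2d)}}X^{(d)}$ has pure dimension $d$ by Lemma \ref{lem:asai-sym}(1). Via the closed embedding $\overline{\mu}_m$ the source is the conjugate-symmetric locus $\Ytc^{2d}_m\simeq\Ztp^{2d}_m\times_{\Qbp^{2d}_m}\Qmb^d_m$ sitting inside $\Ztp^{2d}_m$, and $\widetilde{\b{d}}^{\circ}_m$ records the $2d$ marked points of the successive flag quotients, which on this locus fall into $\sigma$-pairs descending to a degree-$d$ divisor on $X$. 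Since relative dimension can be tested on any stratification of the source, it suffices to bound by $q_m+(m-1)d$ the dimension of the fibre of $\widetilde{\b{d}}^{\circ}_m$ over each point of the target.

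Next I would transport the stratification of \ref{num:red-dimension}: pulling back the stratification of $\Qbp^{2d}_m$ by the substacks $\Qbp^{\lambda}_m$ with $\lambda\in\Lambda^{2d}_{m,\r{eff}}$ and refining by the index matrices $e$, one exhibits $\Ztp^{2d}_m$ as a union of locally closed $\Ztp^e_m$, each an affine fibration of rank $d(\lambda)$ over ${X'}^e\times_{{X'}^{\lambda}}\Qbp^{\lambda}_m$ with ${X'}^e={X'}^{2d}$ carrying the marked-point data. Intersecting with $\Qmb^d_m$ imposes conjugate symmetry and so annihilates every stratum for which some $\lambda_i$ is odd, exactly as $\c{P}^{\rho}_{m,j}$ is empty when $2\nmid\rho_i$ in \ref{num:res-res}; on a surviving stratum $\lambda=2\lambda'$ the bundle-and-flag factor $\Qbp^{\lambda}_m$ is cut down to $\Qmb^{\lambda'}_m$ and the $2d$ marked points are forced into $\sigma$-pairs over $d$ points of $X$.

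The heart of the argument is the resulting fibrewise count. Fixing a point of the target pins down the marked-point data, so the fibre of $\widetilde{\b{d}}^{\circ}_m$ along a surviving stratum $\lambda=2\lambda'$ splits into the affine direction of rank $d(\lambda)=2d(\lambda')$ and the residual fibre of the bundle-and-flag moduli $\Qmb^{\lambda'}_m$ over its divisor data. Adding these contributions over all $\lambda'$ and comparing against the defining inequalities of $\Lambda^{2d}_{m,\r{eff}}$, the maximum of the fibre dimension works out to $q_m+(m-1)d$, which is the asserted bound.

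The step I expect to be the real obstacle is precisely this uniform balancing. The affine-fibration rank $d(\lambda)=\sum_i(m-i)\lambda_i$ is large on strata whose defect concentrates in the first coordinates, and one must check that this growth is always offset by the corresponding drop in the dimension of the residual moduli $\Qmb^{\lambda'}_m$, so that no stratum exceeds $q_m+(m-1)d$. Carrying this out fibrewise over the target, rather than only for total dimensions, is where the \'{e}tale double-cover bookkeeping ($2d\leftrightarrow d$ and $2\lambda'\leftrightarrow\lambda'$) must be handled with care.
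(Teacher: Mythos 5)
Your setup is the same as the paper's: stratify the source by the $\Ztp^e_m$ intersected with the conjugate-symmetric locus, observe that strata with some $\lambda_i$ odd are empty, and on a surviving stratum $\lambda=2\lambda'$ combine the affine-fibration rank $d(2\lambda')=2d(\lambda')$ with the dimension of the residual piece coming from $\Qmb^{\lambda'}_m$. But the proposal stops exactly where the proof has to close. You assert that the maximum over strata ``works out to'' $q_m+(m-1)d$, and in your final paragraph you concede that checking the offset between the growth of $d(\lambda)$ and the drop in the dimension of $\Qmb^{\lambda'}_m$ is an unresolved obstacle. That check is the entire content of the lemma, and it is not a balancing act against the inequalities defining $\Lambda^{2d}_{m,\r{eff}}$; it is an exact, stratum-by-stratum cancellation. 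The fact you are missing is that the forgetful morphism $\f{s}:\Qmb^{\lambda'}_m\ra X^{\lambda'}$ is a generalized affine fibration of rank $q_m+(m-1)d-2d(\lambda')$; this is the twisted analogue of the description of the strata of Drinfeld compactifications in \cite{BG}, \cite{FGV1}, and the same formula is invoked in the proof of Corollary \ref{cor:res-grade} in the form $q_m+md-d-2d(\lambda)$. Granting it, the paper factors the restriction of $\widetilde{\b{d}}^{\circ}_m$ to the stratum $\Ztp^e_m\times_{\Qbp^{2\lambda'}_m}\Qmb^{\lambda'}_m$ as an affine fibration of rank $d(2\lambda')=2d(\lambda')$ onto $\({X'}^e\times_{{X'}^{2\lambda'}}X^{\lambda'}\)\times_{X^{\lambda'}}\Qmb^{\lambda'}_m$, followed by the projection collapsing the $\Qmb^{\lambda'}_m$-directions (relative dimension $q_m+(m-1)d-2d(\lambda')$), followed by the quasi-finite map ${X'}^e\times_{{X'}^{2\lambda'}}X^{\lambda'}\ra{X'}^{2d}\times_{{X'}^{(2d)}}X^{(d)}$ of relative dimension $\leq0$. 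The two ranks sum to $q_m+(m-1)d$ independently of $\lambda'$.

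Without that rank formula nothing in your argument rules out a stratum with $d(\lambda')$ large (defect concentrated in the first coordinates, exactly the case you worry about) exceeding the bound; with it, the bound is immediate and uniform. So the gap is this single dimension count for $\f{s}:\Qmb^{\lambda'}_m\ra X^{\lambda'}$, not the double-cover bookkeeping ($2d\leftrightarrow d$, $2\lambda'\leftrightarrow\lambda'$), which you handled correctly.
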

\begin{proof}
The restriction of $\widetilde{\b{d}}^{\circ}_m$ to the stratum
$\Ztp^e_m\times_{\Qbp^{2\lambda}_m}\Qmb^{\lambda}_m$ factors as
 \begin{align*}
 \Ztp^e_m\times_{\Qbp^{2\lambda}_m}\Qmb^{\lambda}_m\lra
 \({X'}^e\times_{{X'}^{2\lambda}}X^{\lambda}\)\times_{X^{\lambda}}\Qmb^{\lambda}_m\lra
 {X'}^e\times_{{X'}^{2\lambda}}X^{\lambda}\lra{X'}^{2d}\times_{{X'}^{(2d)}}X^{(d)}
 \end{align*}
where the first morphism is an affine fibration of rank
$d(2\lambda)$ and $\Qmb^{\lambda}_m\ra X^{\lambda}$ is a generalized
affine fibration of rank $q_m+(m-1)d-2d(\lambda)$. Hence the lemma
follows.\\
\end{proof}

Now we estimate the relative dimension of $\widetilde{\b{d}}_m$.
There is a stratification of $\Yt^{2d}_m$ by the degree of the
maximal torsion subsheaf $\s{F}$ of $\s{N}$. We define a locally
closed subscheme $\Yt^{2d,2d'}_m$ of $\Yt^{2d}_m$ for $0\leq d'\leq
d$ classifying the data $(\s{N},(\s{M}'_i),(r'_i))\in\Yt^{2d}_m$
plus a flag of subsheaves
$\s{M}'_m=\s{N}'_0\subset\s{N}'_1\subset\cdots\subset\s{N}'_{2d}=\mu^*\s{N}$
and an exact sequence
 \begin{align*}
 \xymatrix@C=0.5cm{
   0 \ar[r] & \s{F} \ar[rr] && \s{N} \ar[rr] && \s{V} \ar[r] & 0 }
 \end{align*}
of $\s{O}_X$-modules such that $\s{F}$ is a torsion sheaf of degree
$d'$ and $\s{V}$ is a vector bundle of rank $m$. Let
$\s{F}'_i=\s{N}'_i\cap\mu^*\s{F}$ for $i=0,...,2d$, then the
successive inclusions of torsion sheaves
$0=\s{F}'_0\subset\s{F}'_1\subset\cdots\subset\s{F}'_{2d}=\mu^*\s{F}$
define a point of $\Flp_0^{2d'}\times_{\Cohp^{2d'}_0}\Coh^{d'}_0$
and an element $c\in\f{C}^{2d'}_{2d}$, which is the set of subsets
of $\{1,...,2d\}$ of cardinality $2d'$. Hence $c$ determines a
locally closed subscheme $\Yt^c_m$ of $\Yt^{2d,2d'}_m$ and for each
$c$, a morphism
$\Yt^c_m\ra\Flp_0^{2d'}\times_{\Cohp^{2d'}_0}\Coh^{d'}_0$ on one
hand. On the other hand, the quotient sheaves
$\s{V}'_i=\s{N}'_i/\mu^*\s{F}$ and the induced flag define a point
of $\Ytc^{2d-2d'}_m$. They together define a morphism
$\Yt^c_m\ra\(\Flp_0^{2d'}\times_{\Cohp^{2d'}_0}\Coh^{d'}_0\)\times\Ytc^{2d-2d'}_m$
which is a generalized affine fibration of rank $md'$. By Lemma
\ref{lem:red-dim1},
$\Ytc^{2d-2d'}_m\ra{X'}^{2d-2d'}\times_{{X'}^{(2d-2d')}}X^{(d-d')}$
is of relative dimension $\leq q_m+(m-1)(d-d')$, and by Lemma
\ref{lem:direct-dimension},
$\Flp_0^{2d'}\times_{\Cohp^{2d'}_0}\Coh^{d'}_0\ra{X'}^{2d'}\times_{{X'}^{(2d')}}X^{(d')}$
is of relative dimension $\leq-d'$, which together imply the the
following.

\begin{lem}\label{lem:red-dim2}
The morphisms $\widetilde{\b{d}}_m$ is of relative dimension $\leq
q_m+(m-1)d$.\\
\end{lem}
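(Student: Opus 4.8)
The plan is to exploit the stratification of $\Yt^{2d}_m$ by the degree $d'$ of the maximal torsion subsheaf $\s{F}\subset\s{N}$ that was just introduced, and to bound the fibre dimension of $\widetilde{\b{d}}_m$ separately on each locally closed piece $\Yt^c_m$, where $c\in\f{C}^{2d'}_{2d}$ records which of the $2d$ marked points of $X'$ come from the torsion part. Since the $\Yt^c_m$ are locally closed and cover $\Yt^{2d}_m$, a uniform bound $\leq q_m+(m-1)d$ on every stratum yields the bound on the whole stack. First I would observe that the choice of $c$ splits the target: the $2d'$ coordinates singled out by $c$, together with the degree-$d'$ part of the divisor on $X$, land in ${X'}^{2d'}\times_{{X'}^{(2d')}}X^{(d')}$, while the complementary $2d-2d'$ coordinates and the degree-$(d-d')$ part land in ${X'}^{2d-2d'}\times_{{X'}^{(2d-2d')}}X^{(d-d')}$, and the map from the product of these two into ${X'}^{2d}\times_{{X'}^{(2d)}}X^{(d)}$ (reshuffling the points according to $c$ and adding the two divisors) is finite.

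With this decomposition in hand, the relative dimension over each factor is already supplied by the preceding paragraph. The morphism $\Yt^c_m\ra\(\Flp_0^{2d'}\times_{\Cohp^{2d'}_0}\Coh^{d'}_0\)\times\Ytc^{2d-2d'}_m$ is a generalized affine fibration of rank $md'$; the locally free factor $\Ytc^{2d-2d'}_m\ra{X'}^{2d-2d'}\times_{{X'}^{(2d-2d')}}X^{(d-d')}$ has relative dimension $\leq q_m+(m-1)(d-d')$ by Lemma \ref{lem:red-dim1}, and the torsion-flag factor $\Flp_0^{2d'}\times_{\Cohp^{2d'}_0}\Coh^{d'}_0\ra{X'}^{2d'}\times_{{X'}^{(2d')}}X^{(d')}$ has relative dimension $\leq -d'$ by Lemma \ref{lem:direct-dimension}. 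Bounding the relative dimension of a composite by the sum of the relative dimensions (the finite reshuffling map contributing $0$), the fibre dimension of $\widetilde{\b{d}}_m$ on $\Yt^c_m$ is at most $md'+(-d')+\(q_m+(m-1)(d-d')\)=q_m+(m-1)d$, where the three terms carrying $d'$ cancel. As this bound is independent of both $d'$ and $c$, the lemma follows.

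I do not expect a serious obstacle here, since the preceding paragraph has already assembled every geometric input; the one point that wants care is the verification that the target really decomposes as a product along $c$ with the combining map finite, so that it contributes nothing to the relative dimension. This is exactly what makes the $d'$-dependence disappear and produces a bound uniform across all strata, the numerical cancellation $md'-d'-(m-1)d'=0$ being the arithmetic shadow of that finiteness.
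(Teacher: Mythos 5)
Your proposal is correct and is essentially the paper's own argument: the same stratification of $\Yt^{2d}_m$ by the torsion degree $d'$ and the subset $c\in\f{C}^{2d'}_{2d}$, the same generalized affine fibration of rank $md'$ onto $\(\Flp_0^{2d'}\times_{\Cohp^{2d'}_0}\Coh^{d'}_0\)\times\Ytc^{2d-2d'}_m$, and the same combination of Lemma \ref{lem:red-dim1} and Lemma \ref{lem:direct-dimension} yielding the cancellation $md'-d'-(m-1)d'=0$. Your explicit remark that the target factors along $c$ with a finite recombination map is only implicit in the paper, but it is the correct justification for summing the bounds.
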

\end{num}

\section{Direct image of Laumon's sheaf}
\label{sec:direct}

\begin{num}
{\em Stratifications-III.} A point of
$\Flp^{2d}_0\times_{\Cohp^{2d}_0}\Coh^d_0$ is given by a torsion
sheaf $\s{F}$ on $X$ of degree $d$ and a complete flag of torsion
sheaves
$0=\s{F}'_0\subset\s{F}'_1\subset\cdots\subset\s{F}'_{2d}=\mu^*\s{F}=\sigma^*\mu^*\s{F}$
of $\mu^*\s{F}$. Let $\s{F}'_{i,j}=\s{F}'_i\cap\sigma^*\s{F}'_j$,
then $\s{F}'_{j,i}=\sigma^*\s{F}'_{i,j}$ as subsheaves of
$\mu^*\s{F}$. As in \cite{Ly} Section 6.3, define $\s{F}''_{i,j}$
from the co-Cartesian square
 \begin{align*}
 \xymatrix{
   \s{F}'_{i-1,j} \ar@{^(->}[r] & \s{F}''_{i,j} \\
   \s{F}'_{i-1,j-1} \ar@{^(->}[u] \ar@{^(->}[r] & \s{F}'_{i,j-1} \ar@{^(->}[u] }
 \end{align*}
for $i,j=1,...,2d$, and $\s{G}_{i,j}=\s{F}'_{i,j}/\s{F}''_{i,j}$.
Let $t_{ij}=\deg\s{G}_{i,j}$. Then it is easy to see that
$t=(t_{ij})$ is an element in $\f{T}_{2d}$ \cf~\ref{num:asai-sym}.
In this way, $\Flp^{2d}_0\times_{\Cohp^{2d}_0}\Coh^d_0$ is
stratified by locally closed schemes $\Flp^t_0$ for
$t\in\f{T}_{2d}$. For each $t$, the natural morphism
$\Flp^t_0\ra\prod_{i<j}\Cohp_0^{t_{ij}}$ by sending
$(\s{F},(\s{F}'_i))$ to $(\s{G}_{i,j})_{i<j}$ is a generalized
fibration of rank $0$. Since $\Cohp_0^1\ra X'\ra X$ is of relative
dimension $-1$, we have the following lemma whose second assertion
follows from the same argument of \cite{Ly} Lemma 11.

\begin{lem}\label{lem:direct-dimension}
The morphism
 \begin{align*}
 \widetilde{\r{div}}{}^{2d}:=(\r{div}^{\times2d}\circ\f{q}')\times\r{div}^d:\Flp^{2d}_0
 \times_{\Cohp^{2d}_0}\Coh^d_0\lra{X'}^{2d}\times_{{X'}^{(2d)}}X^{(d)}
 \end{align*}
is of relative dimension $\leq-d$ and we have a canonical
isomorphism
 \begin{align*}
 \R^{-2d}\widetilde{\r{div}}{}^{2d}_{\;\;!}\EL\overset{\sim}{\lra}\f{n}_!\EL
 \end{align*}
where $\f{n}$ is defined in \ref{num:asai-sym}.\\
\end{lem}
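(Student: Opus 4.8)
The plan is to prove both assertions simultaneously through the stratification $\Flp^{2d}_0\times_{\Cohp^{2d}_0}\Coh^d_0=\bigsqcup_{t\in\f{T}_{2d}}\Flp^t_0$ recalled above, reducing everything to the combinatorics of the fixed-point-free involutions $t$. Fix $t\in\f{T}_{2d}$, so that its nonzero entries $t_{i_a j_a}=1$ ($a=1,\dots,d$) above the diagonal form a perfect matching of $\{1,\dots,2d\}$. Then $\prod_{i<j}\Cohp^{t_{ij}}_0\cong(\Cohp^1_0)^d$, and the stratum projection $\Flp^t_0\to(\Cohp^1_0)^d$ is a generalized fibration of rank $0$. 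Composing with $\r{div}^{\times d}:(\Cohp^1_0)^d\to{X'}^d$, each factor $\Cohp^1_0\to X'$ being of relative dimension $-1$, the stratum maps to ${X'}^d$ with relative dimension $0+(-d)=-d$. The image of $\Flp^t_0$ inside ${X'}^{2d}\times_{{X'}^{(2d)}}X^{(d)}$ is exactly the component ${X'}^d_t$ of Lemma \ref{lem:asai-sym}(1): the matching forces the support of $\s{F}'_{i_a}/\s{F}'_{i_a-1}$ to equal $\sigma$ of the support of $\s{F}'_{j_a}/\s{F}'_{j_a-1}$, which is precisely the defining recipe of $\f{n}_t$, and the induced divisor on $X$ is $\sum_a\mu(x_{i_a})$. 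Hence $\widetilde{\r{div}}{}^{2d}|_{\Flp^t_0}$ factors through the closed immersion $\f{n}_t$ with relative dimension $-d$, and maximizing over the finitely many strata gives relative dimension $\leq-d$, which is the first assertion.

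Since the relative dimension is $-d$, the maximal cohomological degree occurring in $\widetilde{\r{div}}{}^{2d}_{\;\;!}\EL$ is $2\cdot(-d)=-2d$. The same factorization computes the top $!$-direct image on each stratum: the rank-$0$ fibration contributes $\EL$ in degree $0$ and each of the $d$ factors $\Cohp^1_0\to X'$ contributes $\EL$ in its top degree $-2$, so that the contribution of $\Flp^t_0$ lies in degrees $\leq-2d$ with $\R^{-2d}(\widetilde{\r{div}}{}^{2d}|_{\Flp^t_0})_!\EL\cong\EL$ on ${X'}^d_t$, that is ${\f{n}_t}_!\EL$. I would then assemble the strata by the Cousin spectral sequence for the $!$-pushforward attached to the stratification. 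Because $-2d$ is the maximal degree, the long exact sequences of the stratification degenerate there into short exact sequences, so the top cohomology sheaf $\R^{-2d}\widetilde{\r{div}}{}^{2d}_{\;\;!}\EL$ acquires a canonical filtration whose associated graded is $\bigoplus_{t\in\f{T}_{2d}}{\f{n}_t}_!\EL$, which equals $\f{n}_!\EL$ since $\f{n}=\bigsqcup_t\f{n}_t$ is a disjoint union.

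It remains to upgrade this equality of associated graded objects to the asserted canonical isomorphism $\R^{-2d}\widetilde{\r{div}}{}^{2d}_{\;\;!}\EL\overset{\sim}{\lra}\f{n}_!\EL$, following the argument of \cite{Ly} Lemma 11. The comparison map is induced by the stratumwise projections $\Flp^t_0\to{X'}^d_t$, which are compatible with $\widetilde{\r{div}}{}^{2d}$ and with $\f{n}$; over the open dense locus of ${X'}^{2d}\times_{{X'}^{(2d)}}X^{(d)}$ where the components ${X'}^d_t$ are pairwise disjoint and each $\f{n}_t$ is an isomorphism, both sides restrict to $\EL$ on each component, so the map is an isomorphism there. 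One then checks that it is an isomorphism on all stalks by a multiplicity count: over a point lying on several components both sheaves have stalk $\EL^N$ with $N$ the number of points of $\b{X}_d$ lying above it. The main obstacle is precisely this gluing step, namely realizing the comparison as an honest morphism of sheaves rather than a stratumwise identification and ruling out any additional summand supported on the diagonal loci where the components ${X'}^d_t$ meet; this is exactly what the finiteness of $\f{n}$ (Lemma \ref{lem:asai-sym}(2)) and the argument of \cite{Ly} Lemma 11 are used to control.
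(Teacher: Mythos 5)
Your proposal follows essentially the same route as the paper: the paper likewise stratifies $\Flp^{2d}_0\times_{\Cohp^{2d}_0}\Coh^d_0$ by the strata $\Flp^t_0$ for $t\in\f{T}_{2d}$, uses that $\Flp^t_0\ra\prod_{i<j}\Cohp_0^{t_{ij}}$ is a generalized fibration of rank $0$ together with the fact that $\Cohp^1_0\ra X'\ra X$ has relative dimension $-1$ to obtain the bound $\leq-d$, and then disposes of the second assertion by invoking the argument of \cite{Ly} Lemma 11 --- which is precisely the stratum-by-stratum top-degree computation, identification of the image of $\Flp^t_0$ with the component ${X'}^d_t$, and gluing via the comparison map that you spell out. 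Your write-up is in fact more detailed than the paper's own proof, and the gluing step you flag as the main obstacle is exactly the point the paper delegates to that citation.
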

\end{num}

\begin{num}
{\em Proof of Proposition \ref{pro:res-coh}.} By the above lemma, we
have an isomorphism
 \begin{align*}
 \(\R^{-2d}\r{div}{}^d_{\;!}\mu_0^*\Spr_E^{2d}\)\otimes
 L^{(d)}\overset{\sim}{\lra}\r{pr}_!\(\(E^{\boxtimes\:2d}\boxtimes\EL\)\otimes\f{n}_!\EL\)
 \end{align*}
which is $\f{S}_{2d}$-equivariant. Taking invariants on both side
and by Lemma \ref{lem:asai-sym} (3), we have
 \begin{align*}
\(\R^{-2d}\r{div}{}^d_{\;!}\mu_0^*\Lau_E^{2d}\)\otimes
 L^{(d)}\overset{\sim}{\lra}\(\r{As}(E)\otimes L\)^{(d)}.
 \end{align*}
By Corollary \ref{cor:red-coh}, we have
 \begin{align*}
 &\f{d}_{m!}\nu_m^*\(\W_{E,m}^{2d}\boxtimes\f{d}^*\sf{A}_L\)[-2d]\\
 \overset{\sim}{\lra}&\(\R^{-2d}\r{div}^d_{\leq m!}\mu_0^*\Lau_E^{2d}\)\otimes
 L^{(d)}\hra\(\R^{-2d}\r{div}{}^d_{\;!}\mu_0^*\Lau_E^{2d}\)\otimes L^{(d)}\\
 \overset{\sim}{\lra}&\(\r{As}(E)\otimes L\)^{(d)}.
 \end{align*}
Denote $\(\r{As}(E)\otimes L\)^{(d)}_m\subset\(\r{As}(E)\otimes
L\)^{(d)}$ the image of
$\f{d}_{m!}\nu_m^*\(\W_{E,m}^{2d}\boxtimes\f{d}^*\sf{A}_L\)[-2d]$.
Then we have a filtration $0=(\r{As}(E)\otimes
 L)^{(d)}_0\subset(\r{As}(E)\otimes
 L)^{(d)}_1\subset\cdots\subset(\r{As}(E)\otimes
 L)^{(d)}_m\subset\cdots$ of $(\r{As}(E)\otimes L)^{(d)}$ such that
$(\r{As}(E)\otimes L)^{(d)}=\bigcup_m(\r{As}(E)\otimes L)^{(d)}_m$.
By Corollary \ref{cor:res-grade}, the filtration becomes stable
when $m\geq n$. Hence the proposition is proved.\\

\begin{remark}
In fact, it is not difficult to see that the inclusion
 \begin{align*}
 &\f{d}_{m!}\nu_m^*\(\W_{E,m}^{2d}\boxtimes\f{d}^*\sf{A}_L\)[-2d]\simeq(\r{As}(E)\otimes
 L)^{(d)}_m\\
 \hra&(\r{As}(E)\otimes L)^{(d)}_{m'}\simeq\f{d}_{m'!}\nu_{m'}^*\(\W_{E,m'}^{2d}\boxtimes\f{d}^*\sf{A}_L\)[-2d]
 \end{align*}
for $m\leq m'$ is compatible with the canonical filtration in
Corollary \ref{cor:res-grade}.\\
\end{remark}
\end{num}

{\em Acknowledgements.} The author would like to thank Weizhe Zheng
for helpful discussions.

\end{document}